\theoremstyle{plain}
\newtheorem{mconjecture}{Conjecture}
\newtheorem{ithm}{Theorem}
\newtheorem{iprop}{Proposition}
\newtheorem{theorem}{Theorem}[section]
\newtheorem{prop}[theorem]{Proposition}
\newtheorem{lemma}[theorem]{Lemma}
\theoremstyle{definition}
\newtheorem{remark}[theorem]{Remark}
\long\def\symbolfootnote[#1]#2{\begingroup
\def\thefootnote{\fnsymbol{footnote}}\footnote[#1]{#2}\endgroup}
\def\alg{{\mathrm{alg}}}
\def\an{{\mathrm{an}}}
\def\lra{\longrightarrow}
\def\GL{{\bf GL}}
\def\A{\mathbf{A}}
\def\sR{\mathscr{R}}
\def\N{\mathrm{N}}
\def\1{\mf{1}}
\DeclareMathOperator{\cts}{cts}
\DeclareMathOperator{\chr}{char}
\DeclareMathOperator{\Frac}{Frac}
\DeclareMathOperator{\res}{res}
\DeclareMathOperator{\cyc}{cyc}
\DeclareMathOperator{\cond}{cond}
 \DeclareMathOperator{\Norm}{Norm}
\DeclareMathOperator{\rec}{rec} 
\DeclareMathOperator{\Hom}{Hom} \DeclareMathOperator{\End}{End}
\DeclareMathOperator{\ord}{ord}
 \DeclareMathOperator{\Cl}{Cl}
 \DeclareMathOperator{\Frob}{Frob}
\DeclareMathOperator{\tr}{tr}
\newcommand{\mat}[4]{\left( \begin{array}{cc} {#1} & {#2} \\ {#3} & {#4}
\end{array} \right)}
\newcommand{\mf}{\mathfrak }
\newcommand{\mscr}{\mathscr }
\def\fa{\mathfrak{a}}
\def\fn{\mathfrak{n}}
\def\fp{\mathfrak{p}}
\def\fq{\mathfrak{q}}
\def\fr{\mathfrak{r}}
\def\fl{\mathfrak{l}}
\def\fy{\mathfrak{y}}
\def\fP{\mathfrak{P}}
\def\fm{\mathfrak{m}}
\def\T{\mathbf{T}}
\def\I{\mathbf{I}}
\def\Z{\mathbf{Z}}
\def\Q{\mathbf{Q}}
\def\C{\mathbf{C}}
\def\G{\mathbf{G}}
\def\bdf{\begin{defn}}
\def\edf{\end{defn}}
\def\cM{\mathcal{M}}
\def\cL{\mathcal{L}}
\def\cH{\mathcal{H}}
\def\cO{\mathcal{O}}
\def\cS{\mathcal{S}}
\def\cw{\mathcal{W}}
\def\cW{\mathcal{W}}
\def\fb{\mathfrak{b}}
\def\Gal{{\rm Gal}}
\def\ab{{\rm ab}}
\def\Fitt{{\rm Fitt}}
\def\cF{{\cal F}}
\def\ab{\text{ab}}
\def\sL{{\mscr L}}
\def\sF{\mathscr{F}}
\def\sE{\mathscr{E}}
\def\sG{\mathscr{G}}
\def\sH{\mathscr{H}}
\begin{document}
\baselineskip 15.8pt

\title{On the Gross--Stark Conjecture}
\author{Samit Dasgupta\footnote{Supported by NSA Mathematical Sciences Program Grant H98230-15-1-0321}
\\ Mahesh Kakde\footnote{Supported by EPSRC First Grant EP/L021986/1}
 \\ Kevin Ventullo}

\maketitle

\begin{abstract}
In 1980, Gross conjectured a formula for the expected leading term at $s=0$  of the Deligne--Ribet $p$-adic $L$-function associated to a totally even character $\psi$ of a totally real field $F$.  The conjecture states that after scaling by $L(\psi \omega^{-1}, 0)$, this value is equal to a $p$-adic regulator of units in the abelian extension of $F$ cut out by $\psi \omega^{-1}$.  In this paper, we prove Gross's conjecture.  \end{abstract}

\tableofcontents

\section{Introduction}

In 1980, Gross stated a beautiful and precise analog of Stark's conjecture for the  behavior of $p$-adic $L$-functions at $s=0$ (\cite{gross}).    Let $F$ be a totally real field and let 
 \begin{equation} \label{e:chidef}
  \chi \colon G_F \longrightarrow \overline{\Q}^* 
  \end{equation}
  be a totally odd character of the absolute  Galois group of $F$.
Let $p$ be a prime integer.  We fix once and for all embeddings $\overline{\Q} \hookrightarrow \C$ and $\overline{\Q} \hookrightarrow \C_p$, so $\chi$ may be viewed as taking values in $\C$ or $\C_p$.  Here $\C_p$ denotes the  completion of an algebraic closure of $\Q_p$.  Consider the $L$-function associated to $\chi$ with Euler factors at primes above $p$ removed:
\[
L^*(\chi, s) =  L(\chi, s) \cdot \prod_{\fp \mid p} (1 - \chi(\fp) (\N\fp)^{-s}).
\]

Let $\omega\colon G_F \lra \mu_{p-1}$ (or $\mu_2$, if $p=2$) denote the Teichm\"uller character.
There is a unique meromorphic (and as long as $\chi \neq \omega^{-1}$, analytic) $p$-adic $L$-function 
\[ L_p(\chi\omega, s)\colon \Z_p \lra \C_p \] determined by the interpolation
property
\begin{equation} \label{e:padicL}
 L_p(\chi \omega, n) = L^*(\chi\omega^n, n) \text{ for } n \in \Z^{\le 0}. 
 \end{equation}
A classical theorem of Siegel implies that the values $L^*(\chi\omega^n, n)$ for $n \in \Z^{\le 0}$ are algebraic.  Hence by our fixed embedding $\overline{\Q} \hookrightarrow \C_p$, we can view these values as $p$-adic numbers. The existence of the $p$-adic $L$-function satisfying the interpolation property (\ref{e:padicL})
  was proved independently by Deligne--Ribet \cite{dr}, Cassou-Nogu\`es \cite{cn}, and Barsky \cite{b} in the 1970s, and new approaches have been considered recently in \cite{pcsd}, \cite{spiess} and \cite{bkl}.

We partition the set of primes above $p$ in $F$ as $R \cup R'$, where
\[ R = \{ \fp \mid p: \chi(\fp) = 1 \}, \qquad R' = \{ \fp \mid p: \chi(\fp) \neq 1 \}. \]
Note that $r_p(\chi) = \#R$ is precisely the number of Euler factors above $p$ in the expression 
\[
L_p(\chi \omega, 0) =  L^*(\chi, 0) = L(\chi, 0) \cdot \prod_{\fp \mid p} (1 - \chi(\fp))
\]
that vanish.  Since $\chi$ is totally odd, we  have $L(\chi, 0) \neq 0$.
Motivated by this, Gross stated the following conjecture regarding the order of vanishing of $L_p(\chi\omega, s)$ at $s=0$.

\begin{mconjecture}[Gross]  \label{c:order}  We have \[ \ord_{s=0} L_p(\chi\omega, s) = r_p(\chi).  \]
\end{mconjecture}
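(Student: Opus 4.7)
The plan is to establish the two opposite inequalities.

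\textbf{Lower bound} $\ord_{s=0} L_p(\chi\omega,s) \geq r_p(\chi)$. I would exploit the realization of $L_p(\chi\omega,s)$ as a Mellin transform
\[
L_p(\chi\omega,s) \;=\; \int_{\mathcal{O}^{\times}} \chi(x)\,\langle x\rangle^{s}\, d\mu_\chi(x), \qquad \mathcal{O}^{\times} := (\cO_F\otimes\Z_p)^{\times} \;=\; \prod_{\fp\mid p}\cO_\fp^{\times},
\]
against a $p$-adic measure $\mu_\chi$ built from a Shintani/Eisenstein cocycle on $\GL_n(F)$ (in the spirit of Spiess and Charollois--Dasgupta). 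Because $\chi$ is trivial on $\cO_\fp^{\times}$ for each $\fp\in R$ (as $\chi(\fp)=1$ and $\chi$ is unramified at $\fp$), I would refine the construction of $\mu_\chi$ so as to produce an auxiliary $(1+|R|)$-variable function
\[
\Lambda\bigl(s_{0},(s_\fp)_{\fp\in R}\bigr) \;=\; \int_{\mathcal{O}^{\times}} \chi(x)\,\langle x\rangle^{s_{0}}\prod_{\fp\in R}\langle x_\fp\rangle^{s_\fp}\,d\mu_\chi(x)
\]
whose diagonal $s_{0}=s_\fp=s$ recovers $L_p(\chi\omega,s)$. The crucial technical point is to show that $\Lambda$ vanishes identically on each coordinate hyperplane $s_\fp=0$; this should follow from a cocycle-trivialization argument at the local component at $\fp$, valid precisely because $\chi$ is trivial on $\cO_\fp^{\times}$. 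The chain rule applied to the diagonal restriction then yields $\ord_{s=0} L_p(\chi\omega,s)\geq r_p(\chi)$.

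\textbf{Upper bound} $\ord_{s=0} L_p(\chi\omega,s) \leq r_p(\chi)$. I would invoke the leading-term formula proved as the main theorem of this paper, which expresses the $r_p(\chi)$-th Taylor coefficient of $L_p(\chi\omega,s)$ at $s=0$ as $L(\chi,0)$ times a determinant of $p$-adic logarithms of Gross--Stark units in the abelian extension $H/F$ cut out by $\chi^{-1}$. Since $\chi$ is totally odd we have $L(\chi,0)\neq 0$, so the desired upper bound reduces to the non-degeneracy of this Gross regulator.

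\textbf{Main obstacle.} The principal difficulty is the non-vanishing of the regulator: a Leopoldt-type $p$-adic independence statement for the $\chi$-isotypic component of the Gross--Stark units, for which no general proof is available. For $r_p(\chi)\leq 1$ the statement reduces to the non-triviality of a single $p$-adic logarithm and can be settled by Brumer--Baker-type transcendence input; in general one obtains Conjecture~1 only conditional on this non-vanishing, and providing an unconditional argument would require genuinely new arithmetic input beyond what the cocycle-theoretic lower bound supplies.
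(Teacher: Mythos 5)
There is no proof of Conjecture~\ref{c:order} in the paper to compare your attempt against: the order-of-vanishing conjecture remains open, and the authors say so explicitly. What the paper proves unconditionally is Conjecture~\ref{c:gross} (the leading-term formula, Theorem~\ref{t:main}); combined with the known lower bound $\ord_{s=0} L_p(\chi\omega,s)\ge r_p(\chi)$ of Charollois--Dasgupta and Spiess, this reduces Conjecture~\ref{c:order} to the non-vanishing $\sR_p(\chi)\neq 0$, which is known only for $r\le 1$. Your proposal arrives at exactly this reduction and correctly identifies the regulator non-vanishing as the genuine open problem, so your assessment of the landscape agrees with the paper's own.

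Two clarifications. First, your lower-bound sketch --- a multi-variable interpolation $\Lambda(s_0,(s_\fp)_{\fp\in R})$ vanishing identically on each coordinate hyperplane $s_\fp=0$ --- is only heuristically related to the cited cohomological proofs; those arguments work with the Eisenstein/Shintani cocycle and a factorization of the associated distribution rather than with a literal multi-variable $p$-adic $L$-function vanishing on hyperplanes, and making your version rigorous would require a construction you have not supplied. Since the lower bound is already in the literature this is not where the real difficulty lies, but you should not present that sketch as a proof. Second, and more importantly, the gap you flag at the end --- the $p$-adic independence statement $\sR_p(\chi)\neq 0$ for $r\ge 2$ --- is precisely the obstruction the paper records, and there is no known approach to it; the $r\le 1$ case rests on Brumer--Baker-type transcendence input and does not generalize. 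Consequently what you have written is a correct \emph{reduction} of Conjecture~\ref{c:order}, not a proof, and the paper itself supplies only that same reduction (as a corollary of Theorem~\ref{t:main}) without resolving the conjecture.
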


The inequality \begin{equation} \label{e:ineq}
 \ord_{s=0} L_p(\chi\omega, s) \ge r_p(\chi) \end{equation}
can be shown to follow from Wiles's proof of the Main Conjecture of Iwasawa theory, at least for $p \neq 2$ (for example, see \cite[\S2.1]{v}).  Recently
a more direct analytic proof of (\ref{e:ineq}) that holds for all $p$ was given  in \cite[Theorem 3]{pcsd} and \cite{spiess}.  Note that both of these latter papers use Spiess's  results on cohomological $p$-adic $L$-functions proved in \cite{spiesshilb}. 

\bigskip

Even more strikingly, Gross stated a $p$-adic analog of Stark's conjecture that gives an  exact formula for the leading term of $L_p(\chi\omega, s)$ at $s=0$.
To state this conjecture,  we first recall Gross's $p$-adic regulator $\sR_p(\chi)$. 

  Let $H$ denote the CM, cyclic extension of $F$ cut out by $\chi$, i.e. the subfield of $\overline{F}$ fixed by the kernel of $\chi$.  Let $c$ denote the unique complex conjugation of $H$.
  Let 
 \[ \log_p \colon \Q_p^* \longrightarrow \Z_p \]
 denote Iwasawa's $p$-adic logarithm, normalized such that $\log_p(p) = 0$.
 If $\fP$ is a prime ideal of $\cO_H$ lying above $p$, we consider  two continuous homomorphisms
 \begin{align*}
o_\fP= \ord_{\fP} \colon& \ F_{\fP}^* \longrightarrow \Z, \\
\ell_\fP= \log_p \circ  \Norm_{F_{\fP}/\Q_p} \colon& \ F_{\fP}^* \longrightarrow \Z_p.
\end{align*}
 
 Let $U = \cO_H[1/p]^*$ denote the group of $p$-units of $H$ and let $X$ be the free abelian group on the set $S_p$ of prime ideals of $\cO_H$ lying above $p$.  The abelian groups $U$ and $X$ are naturally modules for the group $G = \Gal(H/F)$.
 We consider the minus subspaces of these modules for the action of complex conjugation:
 \[ U^-  = \{u \in U: c(u) = u^{-1} \}, \qquad X^- = \{x \in X: c(x) = -x \}. \]
Consider the two $G$-module homomorphisms
 \begin{alignat*}{2}
 o_p \colon& U^- \longrightarrow X^-  \qquad && o_p(u) = (o_\fP(u))_{\fP \in S_p}, \\
 \ell_p \colon& U^- \longrightarrow X^- \otimes \Z_p  \qquad && \ell_p(u) = (-\ell_\fP(u))_{\fP \in S_p}.
 \end{alignat*}
 
 One  verifies that after tensoring with $\Q$, the map $o_p$ induces a $\Q[G]$-module isomorphism
 \begin{equation} \label{e:oisom}
   \xymatrix{ U^- \otimes \Q  \ar[r]^{\sim} & X^- \otimes \Q} 
   \end{equation}
    (see for example \cite[I.4]{tatebook}).
Denote by $E$ the finite extension of $\Q_p$ generated by the values of the character $\chi$.  We consider the $\chi^{-1}$-components of $U^-$ and $X^-$:
 \[ U_\chi  = \{ u \in U^- \otimes E: \sigma(u) = u^{\chi^{-1}}(\sigma) \}, \qquad
 X_\chi = \{x \in X^- \otimes E: \sigma(u) = \chi^{-1}(\sigma) x \}. \]
The $E$-vector space $X_\chi$ has dimension $r_p(\chi)$, and by (\ref{e:oisom}) the same is true for $U_\chi$.
After tensoring with $E$ (over $\Z$ and $\Z_p$ respectively), the maps $o_p$ and $\ell_p$ induce $E[\G]$-module homomorphisms
\[ o_p^\chi, \ell_p^\chi \colon U_\chi \longrightarrow X_\chi, \]
with $o_p^\chi$ an isomorphism.  In parallel with the classical Stark regulator (see \cite[I.4.5]{tatebook}), Gross's regulator is defined by\footnote{This definition of $\sR_p(\chi)$ differs from the regulator
$R_p(\chi)$ defined in \cite{gross} by the simple factor $(-1)^{r_p(\chi)}\prod_{\fp \mid p} f_\fp$, with notation as in {\em loc.\ cit}.  We have chosen our conventions to agree with \cite{ddp} in order to make the statement of Theorem~\ref{t:main} as clean as possible.}
\[ \sR_p(\chi) = \det(\ell_p^\chi \circ (o_p^\chi)^{-1}) \in E. \]

The following is often referred to as the Gross--Stark Conjecture. For simplicity we write $r$ for $r_p(\chi)$.
\begin{mconjecture}[Gross] \label{c:gross} We have:
\begin{equation} \label{e:gross}
 \frac{L_p^{(r)}(\chi, 0)}{r! L(\chi, 0)} =  \sR_p(\chi) \prod_{\fp \in R'} (1 - \chi(\fp)).
 \end{equation}
 \label{e:g2}
\end{mconjecture}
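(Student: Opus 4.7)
The overall strategy is to adapt Ribet's method, following the rank-one argument of \cite{ddp} and extending to arbitrary $r = r_p(\chi)$. The plan is to produce cohomology classes in $H^1(G_F, E(\chi))$ whose local conditions force them to arise from genuine $p$-units in $U_\chi$, and then to match the trace of Frobenius on these classes against the derivative of $L_p(\chi\omega, s)$ at $s=0$.

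I would begin by passing to a group-ring-valued refinement of the conjecture. Rather than working with the single character $\chi$, consider the family of twists $\chi\psi$ for $\psi$ running over finite-order characters of a suitable $\Z_p$-tower of abelian extensions of $H$ unramified outside $p$. Package the values $L_p(\chi\psi\omega, s)$ into a group-ring-valued $L$-function in the style of Deligne--Ribet or Spiess; Gross's formula then becomes an identity in an Iwasawa algebra modulo a power of the augmentation ideal, which is considerably more flexible to attack by deformation methods. Next I would construct a cuspidal ordinary Hida family $\mathbf{f}$ of Hilbert modular forms whose weight-one specialization has constant terms congruent to those of a suitable Eisenstein series attached to $\chi$, the congruence being controlled precisely by the group-ring $L$-function. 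To $\mathbf{f}$ is attached a two-dimensional Galois pseudorepresentation; after localization at the Eisenstein ideal and a careful choice of lattice, one obtains an actual representation $\rho$ with residual semisimplification $1 \oplus \chi$, yielding a nontrivial cocycle class in $H^1(G_F, E(\chi))$. The ordinarity at $p$ together with unramifiedness outside $p$ of $\rho$ impose precisely the Selmer conditions needed, via (\ref{e:oisom}), to realize the class as the Kummer image of a genuine $p$-unit in $U_\chi$.

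On the analytic side, $\tr \rho(\Frob_\fP)$ for $\fP \in R$ can be computed in terms of the $\ell_\fP$ and $o_\fP$ invariants of the extracted units, recovering Gross's regulator $\sR_p(\chi)$ as the determinant in its definition; the deformation direction of the Hida family corresponds to the $s$-derivative of $L_p(\chi\omega, s)$, matching via Spiess's cohomological construction of the $p$-adic $L$-function. The factor $\prod_{\fp \in R'}(1 - \chi(\fp))$ then appears from the extra Euler factors at primes where $\chi(\fp) \neq 1$, which are present in the $p$-adic $L$-function but not in $L(\chi, 0)$.

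The principal obstacle is the case $r \ge 2$, where one must produce $r$ independent cohomology classes whose combined image generates $U_\chi$ and yields the exact regulator, not merely a scalar multiple. This forces a multi-parameter deformation analysis of the Hida Hecke algebra and a delicate multiplicity-one statement controlling the reducible locus of $\rho$; both inputs lean crucially on the Iwasawa Main Conjecture for totally real fields (Wiles), which pins down the sizes of the relevant Selmer and class groups. Matching the group-ring leading term exactly, as opposed to merely up to a $p$-adic unit, is the heart of the argument and is where the bulk of the technical work will lie.
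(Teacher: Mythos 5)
Your outline captures the general Ribet-style framework correctly (Hida families of Hilbert modular forms with an Eisenstein congruence, associated Galois representations and cohomology classes, local conditions at $p$, and an orthogonality between cohomology and $p$-units), but it misses the single idea that makes the rank-$r$ case tractable, and it substitutes for it a plan that the paper explicitly argues does \emph{not} work.

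You write that the obstacle for $r\ge 2$ is to ``produce $r$ independent cohomology classes whose combined image generates $U_\chi$'' and propose to do so via a multi-parameter deformation and a multiplicity-one statement, with input from the Iwasawa Main Conjecture. The paper considers exactly this natural generalization of the rank-one argument and rejects it: the authors state that they do not know how to construct even a \emph{single} cyclotomic class in $H^1_{\cyc}(\chi)$ in general, because the crucial injectivity of the local restriction map $H^1_R(G_F,E(\chi^{-1}))\to H^1(G_\fp, E)$ fails once $r>1$, and they believe the rescaling step that worked for $r=1$ genuinely fails for $r\ge 3$. So the plan of building $r$ classes with prescribed local expansions $\res_{\fp_j}\kappa_i = x_{ij}o_j + y_{ij}\ell_j$ and reading off $\sL_{\alg}$ from the matrix $(x_{ij}),(y_{ij})$ is not available. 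The paper's workaround is the opposite of what you propose: construct a \emph{single} class $\kappa$ valued not in $E(\chi^{-1})$ but in a higher-dimensional $E$-module $\overline{B}(\chi^{-1})$ built from the $\T_\fm$-span of the upper-right entries $b(\sigma)$ of the Hida Galois representation; then pair it against a whole basis $u_1,\dotsc,u_r$ of $U_\chi$ using the orthogonality. One obtains $r$ relations $\sum_j(\res_{\fp_j}\kappa)(u_i)=0$ in $\overline{B}$, which force the vanishing of the $r\times r$ determinant $\det\bigl((\res_{\fp_j}\kappa)(u_i)\bigr)$ in an appropriate quotient because the rows sum to zero. Applying the explicit Hecke-algebra homomorphism $\varphi_\fm\colon\T_\fm\to W$ (whose construction via the Hecke orbit of a not-quite-eigenform is itself a second new ingredient) converts this determinant identity into the equation $(-1)^{r+1}\sL_{\an}\det(o_i(u_j)) + \det(\ell_i(u_j))=0$, which is precisely Gross's formula. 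Without this determinant mechanism, your outline does not close.

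Two further points. First, the Iwasawa Main Conjecture is not an input to the paper's proof of Gross's formula; it is relevant only to the independent inequality $\ord_{s=0} L_p \ge r$, which is not used here. Relying on it as a crutch for multiplicity-one is therefore a detour the paper does not take. Second, the group-ring/Iwasawa-algebra repackaging you propose at the outset is also not part of the argument; the paper works directly with the single character $\chi$, the ring $\Lambda=\cO_E[[T]]$, and a very explicit local Artin ring $W$ with nilpotent generators $\epsilon_1,\dotsc,\epsilon_r$ carrying the $U_{\fp_i}$-action. If you want to repair your proposal, the two things to internalize are: abandon the hope of cyclotomic classes and work with module-valued cohomology; and extract $\sL_{\an}$ not from a trace-of-Frobenius computation on a single class, but from the vanishing of a determinant of pairings, pushed forward through the Hecke homomorphism $\varphi_\fm$.
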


The equality (\ref{e:gross}) takes place in the field $E$.  The statement of Conjecture~\ref{c:gross} does not rely on Conjecture~\ref{c:order}. Federer and Gross proved that
when the order of $\chi$ divides $p-1$, the $p$-adic valuations of the two sides in Conjecture~\ref{c:gross} are equal  using  the Iwasawa Main Conjecture \cite[Proposition 3.10]{federergross}; in particular it follows that under this restrictive condition Conjecture~\ref{c:order} is equivalent to the statement $\sR_p(\chi) \neq 0$.\footnote{We thank John Coates for informing us about this paper.}  Further partial evidence has been discovered recently; see for instance \cite[Theorems 3.1 and 5.2]{burns}.

For notational simplicity,  define
\[
\sL_{\an}(\chi) =  \frac{L_p^{(r)}(\chi, 0)}{r! L(\chi, 0) \prod_{\fp \in R'} (1 - \chi(\fp))}.
\]
The main result of this paper is a proof of the Gross--Stark Conjecture (Conjecture~\ref{c:gross}):
\begin{ithm} \label{t:main} We have $\sL_{\an}(\chi) = \sR_p(\chi)$.
\end{ithm}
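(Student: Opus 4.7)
My plan is to reduce Theorem~\ref{t:main} to a statement about Galois cohomology classes constructed from Hilbert modular Hida families that are congruent to Eisenstein series modulo a suitable ideal. This approach, initiated by Darmon--Dasgupta--Pollack in the rank-one case, produces a single Galois-cohomological package that simultaneously encodes the analytic quantity $\sL_{\an}(\chi)$ and the algebraic regulator $\sR_p(\chi)$, forcing their equality by a determinant comparison.

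\textbf{Step 1: Ribet-style construction.} First I would construct a cuspidal $\Lambda$-adic family of ordinary Hilbert modular eigenforms $\mathbf{f}$ whose specializations satisfy Eisenstein-like congruences. Starting from a $p$-ordinary Eisenstein series $E$ whose non-constant Fourier coefficients lie in the Hecke algebra and whose constant terms along cusps encode the values $L_p(\chi\omega,s)$, invoke a Ribet-style lemma (in the form adapted to Hilbert modular forms by Wiles) to find a cuspidal lift $f \equiv E \pmod{\mathfrak{I}}$, where $\mathfrak{I}$ is chosen so that its cotangent space captures the first $r$ Taylor coefficients of $L_p(\chi\omega,s)$ at $s=0$.

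\textbf{Step 2: Galois representation and cohomology classes.} Attach a Galois representation $\rho \colon G_F \to \GL_2(\T/\mathfrak{I})$ to this cuspidal family. Since $f \equiv E$, the representation is reducible but non-semisimple, with diagonal characters $1$ and $\chi$; its extension class, contracted against elements of $\Hom(\mathfrak{I}/\mathfrak{I}^2, E)$, produces cohomology classes $\kappa \in H^1(G_F, E(\chi))$. The ordinariness of $\rho$ at each $\fP \mid p$ (Hida theory) pins down the local structure of $\kappa$ at $\fP$, while global Kummer theory, combined with a Greenberg--Wiles Selmer group calculation, identifies $\kappa$ with an element of $U_\chi$. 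Reading off $o_\fP(\kappa)$ and $\ell_\fP(\kappa)$ from the upper and lower diagonal entries of $\rho$ restricted to decomposition groups at $\fP$ then expresses the matrix $\ell_p^\chi \circ (o_p^\chi)^{-1}$ in terms of derivatives of the $U_\fP$-eigenvalues along $\mathbf{f}$. By construction, these derivatives are the Taylor coefficients of $L_p(\chi\omega,s)$, so taking the determinant yields $\sL_{\an}(\chi) = \sR_p(\chi)$.

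\textbf{Main obstacle.} The hard part is the higher-rank case $r > 1$. A single cohomology class suffices when $r=1$ (DDP), but for general $r$ one needs $r$ independent classes whose images form a basis of $U_\chi$ governed by the \emph{full} regulator matrix, not merely its trace. This requires (i) working with group-ring-valued (Iwasawa-algebra) Eisenstein families so that the cotangent space of $\mathfrak{I}$ has rank at least $r$, (ii) a Ritter--Weiss transpose argument relating the Fitting ideal of a suitable module to $L_p$-values, and (iii) a refined cuspidal deformation argument producing a Hida family large enough to accommodate all $r$ Eisenstein congruences simultaneously. The most delicate point is ensuring that the resulting cohomology classes actually span $U_\chi$ and not a proper subspace, which ultimately forces one to construct the family so that its $U_\fP$-eigenvalues vary independently in all $r$ directions.
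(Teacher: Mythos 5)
Your proposal correctly identifies Ribet's method via a cuspidal--Eisenstein congruence in a Hida family as the engine, and the intuition that the $U_\fP$-eigenvalues must vary ``independently in $r$ directions'' is the right instinct (it corresponds to the independent nilpotents $\epsilon_1, \dotsc, \epsilon_r$ in the local algebra $W$ that the paper constructs by tracking the Hecke orbit of the cusp form modulo $T^{r+1}$). However, there are two substantive problems.

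First, a conceptual error: you claim the Ribet cohomology class $\kappa$ is, via Kummer theory, \emph{identified} with an element of $U_\chi$. This misplaces the Tate twist. Kummer theory sends $U_\chi$ isomorphically onto $H^1_R(G_F, E(\chi)(1))$, whereas the class coming from the off-diagonal entry of $\rho$ lives in $H^1_R(G_F, E(\chi^{-1}))$ (or, more precisely, in $H^1_R(G_F, \overline{B}(\chi^{-1}))$ for a module $\overline{B}$ of scalars coming from the Hecke algebra). These two groups are Poitou--Tate \emph{duals}, and the correct relationship is the orthogonality $\sum_i (\res_{\fp_i}\kappa)(u) = 0$ for $u \in U_\chi$, not an identification. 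You cannot ``read off $o_\fP(\kappa)$ and $\ell_\fP(\kappa)$'' because $\kappa$ is not a unit; what you can do is decompose $\res_{\fp_i}\kappa$ as an $E$-linear combination of $o_i$ and $\ell_i$ as functionals on $\widehat{F_{\fp_i}^*}$.

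Second, and more seriously, your ``main obstacle'' resolution is exactly the strategy the paper abandons. You propose constructing $r$ independent cyclotomic cohomology classes (via group-ring Eisenstein families, Ritter--Weiss, a large cuspidal deformation, etc.) whose local restrictions span $U_\chi$. The paper states explicitly that it does not know how to construct even a single cyclotomic class in $H^1_{\cyc}(\chi)$ when $r>1$, because the key ingredient that makes this work for $r=1$ --- injectivity of the restriction $H^1_R(G_F, E(\chi^{-1})) \to H^1(G_{\fp}, E)$ to a \emph{single} $\fp$ --- fails for $r>1$ (and the paper conjectures the needed containment $x_i B \subset \fm$ is actually \emph{false} for $r \ge 3$). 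The paper's new idea is to bypass this entirely: take the single $\overline{B}$-valued class $\kappa$, pair it against a full basis $u_1, \dotsc, u_r$ of $U_\chi$ via the orthogonality relation to get $r$ vanishing equations in $\overline{B}$, observe that the $r\times r$ determinant $\det((\res_{\fp_j}\kappa)(u_i))$ therefore vanishes (rows sum to zero) in a quotient of a module $B_R$ generated by $r$-fold products $b(\sigma_1)\cdots b(\sigma_r)$, and finally push this vanishing through the explicit homomorphism $\varphi_\fm: \T_\fm \to W$ to land in the $1$-dimensional space $\fm_W^r$, where it becomes the scalar identity $(-1)^{r+1}\sL_{\an}\det(o_i(u_j)) + \det(\ell_i(u_j)) = 0$. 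The analytic $\sL$-invariant enters through the relation $\epsilon_1\cdots\epsilon_r = (-1)^{r+1}\sL_{\an}\pi^r$ in $W$, which is a statement about the Hecke orbit of the Eisenstein cusp form, not about a Selmer group rank. Without this determinant mechanism, your plan would stall exactly where you flag the difficulty: there is no known way to make the candidate classes land in $H^1_{\cyc}(\chi)$, and no Ritter--Weiss or Fitting ideal argument in the paper.
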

 In view of (\ref{e:ineq}) and Theorem~\ref{t:main}, it now follows unconditionally that Conjecture~\ref{c:order} is equivalent to $\sR_p(\chi) \neq 0$.  This fact is known for $r \le 1$ (see \cite[Prop.~2.13]{gross}).
 
 Theorem~\ref{t:main} was proved in the case $r=1$ under certain assumptions by the first author in joint work with H.~Darmon and R.~Pollack \cite{ddp}.  These assumptions were later removed by the third author \cite{v}.  At the time of publication of \cite{ddp}, the first author believed the higher rank case to be unapproachable using the methods of
{\em loc.~cit.}  In the remainder of this introduction, we present a  detailed summary of the proof of Theorem~\ref{t:main}, highlighting the obstacles that appear when trying to generalize from $r=1$ and  describing the techniques used to overcome them.

  \begin{remark}  The fact that the endomorphism $\ell_p^\chi \circ (o_p^\chi)^{-1}$ of $U_\chi$ is canonically defined suggests the possibility that one can study its characteristic polynomial and not just its determinant.  In \cite{ds}, the first author and M.~Spiess state a conjectural formula for this characteristic polynomial in terms of the Eisenstein cocycle, generalizing the Gross--Stark Conjecture.  This more general conjecture remains open.
 \end{remark}
 
It is a pleasure to acknowledge the encouragement and suggestions of a  number of colleagues with whom we have discussed this problem over the last decade.  We are extremely grateful to J\"oel Bellaiche, David Burns, Pierre Charollois, Henri Darmon, Matthew Emerton, Ralph Greenberg,  Haruzo Hida, Chandrashekhar Khare, Masato Kurihara, Robert Pollack, Cristian Popescu, and Michael Spiess   for their advice and support.

\subsection{Explicit Formula for the Regulator}

 As noted above, we have $\dim_{E} U_{\chi} = r. $
Let $u_1, \dotsc, u_{r}$ be an $E$-basis for $U_\chi$. Write $R = \{\fp_1, \dotsc, \fp_r \}$.
For each $\fp_i \in R$,  consider the continuous homomorphisms
\begin{align*}
o_i= \ord_{\fp_i} \colon& \ F_{\fp_i}^* \longrightarrow \Z, \\
\ell_i= \log_p \circ  \Norm_{F_{\fp_i}/\Q_p} \colon& \ F_{\fp_i}^* \longrightarrow \Z_p.
\end{align*}
For each $\fp_i \in R$ choose a prime $\fP_i$ of $H$ lying above $\fp_i$.
Then via
\begin{equation} \label{e:localembed}
\cO_H[1/p] \subset H \subset H_{\fP_i} \cong F_{\fp_i},
\end{equation}
we can evaluate $o_i$ and $\ell_i$ on elements of $\cO_H[1/p]^*$, and extend by
linearity to maps
\[
o_i, \ell_i: \cO_H[1/p]^* \otimes E \longrightarrow E.
\]
Gross's regulator
is equal to the following ratio of determinants:
\begin{equation} \label{e:rpdef}
 \sR_p(\chi) = \frac{\det(-\ell_i(u_j))_{i,j = 1 \dots r}}{\det(o_i(u_j))_{i,j=1 \dots r}} \in E.
\end{equation}
It is clear that this ratio is independent of the chosen basis $\{u_i\}$. Furthermore, the ratio is independent of
the choice of $\fP_i$ since replacing $\fP_i$ by $\sigma(\fP_i)$ has the effect of scaling the $i$th row of both matrices in (\ref{e:rpdef})  by $\chi(\sigma)$.
Finally, one sees that $\det(o_i(u_j)) \neq 0$ since
the Dirichlet unit theorem implies that the $\chi^{-1}$-component of the group of $\fp_i$-units of $H$ is 1-dimensional for each $\fp_i \in R$, and hence for the appropriate basis $\{u_i\}$ the matrix $(o_i(u_j))$ can be made to equal the identity. 

\subsection{Cohomological Study of the Conjecture}   For each place $v$ of $F$, choose a decomposition group $G_{v} \subset G_F$ and let $I_v \subset G_v$ be the associated inertia group. This choice corresponds to an embedding $\overline{F} \subset \overline{F}_v$ for each place $v$ and in particular specifies a prime of $H \subset \overline{F}$ above $v$.  We assume in the sequel that the specified prime above $\fp_i$ for $\fp_i \in R$ is equal to the prime $\fP_i$ used in (\ref{e:localembed}).

Let \[ H^1_{R}(G_F, E(\chi^{-1})) \subset H^1(G_F, E(\chi^{-1})) \]  denote the subspace of continuous Galois cohomology classes $\kappa$
  unramified outside $R$, i.e.\ those classes $\kappa$ such that $\res_{I_v} \kappa \in H^1(I_v, E(\chi^{-1}))$ is trivial for all $v \not \in R$.
Note that for each prime $\fp_i \in R$ we have $\chi(G_{\fp_i}) = 1$ and hence
\[
H^1(G_{\fp_i}, E(\chi^{-1})) = H^1(G_{\fp_i}, E) = \Hom_{\cts}(G_{\fp_i}, E) \cong \Hom_{\cts}(\widehat{F_{\fp_i}^*} ,E),
\]
where the last isomorphism invokes the reciprocity isomorphism of local class field theory\footnote{Throughout this article, we adopt Serre's conventions \cite{ser} for the local reciprocity map. Therefore, if $u \in \cO_{F_\fp}^*$, then $\epsilon_{\cyc}(\rec(u)) = \Norm_{\cO_{F_\fp}/\Z_p} u$, where $\epsilon_{\cyc}$ is the usual cyclotomic character defined in (\ref{e:cyclotomic}), and $\rec(\varpi^{-1})$ is a lifting to $G_{\fp}^{\ab}$ of the Frobenius element on the maximal unramified extension of $F_{\fp}$ if $\varpi \in F_{\fp}^*$ is a uniformizer.}

\begin{equation} \rec_{\fp_i} \colon  \widehat{F_{\fp_i}^*} \longrightarrow G_{\fp_i}^{\ab}. \label{e:rec} \end{equation}
Define the subspace of ``cyclotomic classes"
\[
H^1_{\cyc}(\chi) \subset H^1_R(G_F, E(\chi^{-1}))
\]
to be the set of $\kappa$ such that for $\fp_i \in R$, the restriction $\res_{\fp_i} \kappa \in H^1(G_{\fp_i}, E)$ lies in the $E$-span of $o_i$ and $\ell_i$, viewing these as continuous homomorphisms $\widehat{F_{\fp_i}^*} \longrightarrow E.$
 Then $\dim_E H^1_{\cyc}(\chi) = r$ (this is a straightforward generalization of \cite[Lemma 1.5]{ddp}).  Let $\kappa_1, \dotsc, \kappa_r$ be a basis, and for each $\fp_j \in R$ write
\[
\res_{\fp_j} \kappa_{i} = x_{ij} o_j + y_{ij} \ell_{j},
\]
where $x_{ij}, y_{ij} \in E$. Inspired by R.~Greenberg's study of exceptional zeroes \cite{greenberg}, we define
\[
\sL_{\alg}(\chi) =  \frac{\det(x_{ij})_{i,j=1\dots r}}{\det(y_{ij})_{i,j=1\dots r}}.
\]
Using the above mentioned generalization and the fact that $\kappa_1, \dotsc, \kappa_r$ are linearly independent, it can be shown that $\det(y_{ij})_{i,j=1 \dots r} \neq 0$.

We now relate this algebraic $\sL$-invariant to the unit group $U_\chi$. Let $\kappa \in H^1_R(G_F, E(\chi^{-1}))$. Extending by $E$-linearity, we can view $\res_{\fp_i} \kappa$ as a continuous homomorphism
 \[ \res_{\fp_i} \kappa \colon \widehat{F_{\fp_i}^*} \otimes E \longrightarrow E. \]  In \S\ref{s:orthogonal}, we prove the following orthogonality result regarding $H^1_R(G_F, E(\chi^{-1}))$
and $U_\chi$.

\begin{iprop} \label{p:orth}
Let $\kappa \in H^1_R(G_F, E(\chi^{-1}))$ and $u \in U_\chi$.   Viewing $u$ as an element of $\widehat{F_{\fp_i}^*} \otimes E$ via (\ref{e:localembed}), we have
\begin{equation}
 \sum_{i=1}^{r} (\res_{\fp_i} \kappa)(u) = 0.  \label{e:explicito}
 \end{equation}
\end{iprop}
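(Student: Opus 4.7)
The plan is to apply global class field theory to $\kappa|_{G_H}$. Since $\chi|_{G_H} = 1$, the restriction $\kappa|_{G_H}\colon G_H \to E$ is a continuous homomorphism, which via the Artin map corresponds to a continuous map from the id\`ele class group $\A_H^*/H^*$ to $E$. Consequently, for $u \in H^* \otimes_{\Z} E$ (in particular $u \in U_\chi$), global reciprocity yields
\[
\sum_w \kappa_w(u_w) = 0,
\]
where $w$ ranges over all places of $H$, $u_w$ is the image of $u$ in $H_w^* \otimes E$, and $\kappa_w\colon H_w^* \to E$ is the composition of local reciprocity at $w$ with $\kappa|_{G_H}$ restricted to the decomposition subgroup.

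Next I would show that only $w$ lying above primes in $R$ contribute. Archimedean $w$: $H_w = \C$ since $H$ is CM, and $\Hom_{\cts}(\C^*, E) = 0$ because $E$ is $p$-adic. Non-archimedean $w \nmid p$: $\kappa$ is unramified at $w$ (as $R \subset \{\fp\mid p\}$), so $\kappa_w$ kills $\cO_{H_w}^*$, and $u_w \in \cO_{H_w}^*$. For $w \mid p$ lying above some $\fp \in R'$: $\kappa$ is unramified at $w$ while $\chi|_{G_\fp}$ is unramified and nontrivial, so
\[
H^1_{\mathrm{ur}}(G_\fp, E(\chi^{-1})) = E/(1 - \chi^{-1}(\Frob_\fp))E = 0.
\]
Hence $\res_\fp\kappa$ is a coboundary $g \mapsto (\chi^{-1}(g) - 1)x$ for some $x \in E$; its restriction to the subgroup $G_w^H = G_\fp \cap G_H \subset G_\fp$, where $\chi \equiv 1$, vanishes identically, so $\kappa_w \equiv 0$.

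It remains to analyze $w$ above $\fp_i \in R$. Since $\chi|_{G_{\fp_i}} = 1$, $\fp_i$ splits completely in $H$, and its primes form a free $G$-orbit $\{\sigma\fP_i\}_{\sigma\in G}$ with decomposition groups $\sigma G_{\fp_i}\sigma^{-1}$. The cocycle relation $\kappa(\sigma\tau) = \kappa(\sigma) + \chi^{-1}(\sigma)\kappa(\tau)$ combined with $\chi|_{G_{\fp_i}} = 1$ gives $\kappa(\sigma g\sigma^{-1}) = \chi^{-1}(\sigma)\kappa(g)$ for $g \in G_{\fp_i}$, which under local reciprocity translates to $\kappa_{\sigma\fP_i}\circ\sigma = \chi^{-1}(\sigma)\kappa_{\fP_i}$. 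Dually, $\sigma(u) = \chi^{-1}(\sigma)u$ in $U \otimes E$ combined with the compatibility $(\sigma(u))_{\sigma\fP_i} = \sigma(u_{\fP_i})$ yields $u_{\sigma\fP_i} = \chi(\sigma)\,\sigma(u_{\fP_i})$ in $H_{\sigma\fP_i}^* \otimes E$. The two $\chi(\sigma)$ factors cancel:
\[
\kappa_{\sigma\fP_i}(u_{\sigma\fP_i}) = \chi(\sigma)\,\chi^{-1}(\sigma)\,\kappa_{\fP_i}(u_{\fP_i}) = (\res_{\fp_i}\kappa)(u)
\]
for every $\sigma \in G$. Summing, the total above-$R$ contribution equals $|G|\sum_{i=1}^r(\res_{\fp_i}\kappa)(u)$, which must vanish by reciprocity; dividing by the nonzero integer $|G|$ gives the proposition. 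The main obstacle is this last cancellation of $\chi$-twists, requiring careful bookkeeping of the Galois action on completions; the remaining steps are routine unpackings of local and global reciprocity.
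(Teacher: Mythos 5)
Your argument is correct and is essentially the paper's second (class-field-theoretic) proof of Proposition~\ref{p:orth2}: restrict $\kappa$ to $G_H$, use the global reciprocity law for the id\`ele class group of $H$, show that all local terms away from $R_H$ vanish, and use the $\chi^{-1}$-equivariance of $\kappa|_{G_H}$ to collapse the sum over places above each $\fp_i$. The only cosmetic difference is that the paper first passes to $R_H$-units (so the places above $R'$ never enter), whereas you keep $p$-units and instead observe that $H^1_{\nr}(G_\fp,E(\chi^{-1}))=0$ for $\fp\in R'$ forces those local contributions to vanish.
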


 Using Proposition~\ref{p:orth}, one readily proves that
  \[ \sL_{\alg}(\chi)   = \sR_p(\chi). \]
When $r=1$ (say $R = \{\fp\}$), Conjecture~\ref{c:gross} is therefore equivalent to the existence of a nonzero class $\kappa \in H^1_{\cyc}(\chi)$
such that $\res_\fp \kappa = \sL_{\an}(\chi)  o_\fp + \ell_\fp$. The construction of such a class is carried out in \cite{ddp} and \cite{v}.  The natural generalization of this strategy for $r > 1$ is to construct $r$ linearly independent classes in $H^1_{\cyc}(\chi)$ and to use them to compute
$\sL_{\alg}(\chi)$.  However, despite much effort, we do not in fact know how to construct even a single cyclotomic cohomology class in the general case.  The construction for $r=1$ relies crucially on the injectivity of the local restriction
\begin{equation} \label{e:restrict}
 H^1_R(G_F, E(\chi^{-1})) \longrightarrow H^1(G_\fp, E)
\end{equation}
when $R = \{\fp\}$, which in general fails for fixed $\fp \in R$ if $r > 1$.

As described below, in the general case we are still able to construct a class \[ \kappa \in H^1_R(G_F, \overline{B}(\chi^{-1})) \] for some
$E$-vector space $\overline{B}$ with partial knowledge about the local restrictions $\res_{\fp_i} \kappa$.  Our method of proof involves abandoning the hope of constructing cyclotomic classes and calculating $\sL_{\alg}(\chi)$.
 Instead, we  directly use the orthogonality (\ref{e:explicito}) with $\kappa$ and a  basis of
$U_\chi$. We describe below how the resulting equations can be used to prove that  $\sL_{\an}(\chi) = \sR_p(\chi)$.  First we describe  the mechanism through which the analytic $\sL$-invariant $\sL_{\an}(\chi)$ appears in our work and the construction of the cohomology class $\kappa$.

\bigskip

\subsection{An Infinitesimal Eigenform}
Our technique for constructing a cohomology class related to $p$-adic $L$-functions is Ribet's method, which first appeared in \cite{ribet} and was later used to great effect by Mazur and Wiles to prove the Main Conjecture of Iwasawa theory \cite{mw}, \cite{wiles}.
We consider the space of cuspidal Hida families of Hilbert modular forms for $F$ with tame level $\fn = \cond(\chi)$, and let $\T$ denote its Hecke algebra over $\Lambda  = \cO_E[[T]] $.

In \cite{ddp}, a certain linear combination of products of Eisenstein series was used to construct a cuspidal Hida Family $\sF$ that specializes in weight 1 to the Eisenstein series $E_1(1, \chi_S)$.  This Eisenstein series   is the stabilization at all primes $\fp$ above $p$ of the classical weight 1 form $E_1(1, \chi)$ with $U_\fp$-eigenvalue equal to 1.
In the case $r=1$ considered in {\em loc.\ cit.}, the form $\sF$ remains an eigenform in an infinitesimal neighborhood of weight $1$, yielding a $\Lambda$-algebra homomorphism
\begin{align}
\varphi\colon \T & \longrightarrow E[T]/T^2 \label{e:phi1} \\
t & \longmapsto a_1(t \cdot \cF) \pmod{T^2}. \nonumber
\end{align}
Here we normalize our conventions so that $T=0$ corresponds to weight $k=1$.  The explicit nature of the construction of $\sF$ allows us to calculate
\begin{equation} \label{e:phitl}
 \varphi(T_\fl) = 1 + \chi(\fl) \log_p \langle \N\fl \rangle T
 \end{equation}
for primes $\fl$ of $F$ such that $\fl \nmid \fn p$. (Here and throughout, $\langle x \rangle = x/\omega(x)$ for $x \in \Z_p^*$.)
The $p$-adic $L$-function $L_p(\chi\omega, 1-k)$ occurs as the constant term of one of the Eisenstein series used in the construction of $\sF$, and as a result an explicit computation shows that
\begin{equation} \label{e:phiup}
\varphi(U_\fp) = 1 + \sL_{\an}(\chi)T.
\end{equation}  (Equations (\ref{e:phitl}) and (\ref{e:phiup}) hold if $R'$ is nonempty; if
$R'$ is empty then slightly modified equations hold.)

In the general case, it is natural to attempt to construct a $\Lambda$-algebra homomorphism $\T \longrightarrow E[T]/T^{r+1}$ analogous to (\ref{e:phi1}). However, the form  $\cF$ constructed in \cite{ddp} is not an eigenform modulo $T^{r+1}$, and it is unclear if  the construction can be modified to define such an eigenform.  The key idea to circumvent this problem, drawn from \cite{v}, is to simply study the Hecke orbit of the form $\sF$.  Modulo $T^{r+1}$, this orbit is not 1-dimensional over $\Lambda/T^{r+1}$, but it is still finite dimensional and explicitly computable.  Therefore we obtain a representation of $\T$ into a finite dimensional $E$-algebra, namely the endomorphism ring over $E$ of the space of Fourier expansions modulo $T^{r+1}$ of the forms in the Hecke orbit of $\sF$.  These arguments are explained in detail in \S\ref{s:hhh}, culminating with the proof of the following theorem and its generalizations needed to handle all cases.  Let  $\epsilon \colon G_F \longrightarrow \Lambda^*$ denote the $\Lambda$-adic cyclotomic character (see (\ref{e:lambdaeps}) below).

\begin{ithm}   \label{t:hom}
  Suppose $R'$ is nonempty and write $R = \{ \fp_1, \dotsc, \fp_r\}$.  There exists a $\Lambda$-algebra homomorphism
  \[ \varphi \colon \T \longrightarrow W = E[T, \epsilon_1, \dotsc, \epsilon_r]/(T^{r+1}, \epsilon_i^2, \epsilon_iT, \epsilon_1\epsilon_2 \cdots \epsilon_r + (-1)^{r} \sL_{\an}(\chi) T^r) \]
  such that $T_\fl \mapsto 1 + \chi \epsilon(\fl)$ for $\fl \nmid \fn p$, $U_\fl \mapsto 1$ for $\fl \mid \fn$ or $\fl \in R'$, and
  $U_{\fp_i} \mapsto 1 + \epsilon_i$.  \end{ithm}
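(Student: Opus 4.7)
The plan is to realize $W$ as a Hecke-module quotient of $\T$ via a Ribet-style Hida-family construction, generalizing the $r=1$ approach of \cite{ddp} with the key refinement from \cite{v} of studying the full Hecke orbit rather than seeking a single eigenform. First, I would construct a cuspidal Hida family $\sF$ of tame level $\fn$ and nebentypus $\chi$ whose weight-1 specialization is the ordinary Eisenstein series $E_1(1, \chi_S)$, as in \cite{ddp}. Concretely, $\sF$ is a linear combination of products of Eisenstein series chosen so that constant terms cancel and the result is cuspidal. The analytic $L$-invariant $\sL_{\an}(\chi)$ enters through one Eisenstein factor whose constant term is essentially $L_p(\chi\omega, 1-k)$: its order-$r$ Taylor coefficient at $k=1$ equals $\sL_{\an}(\chi)$ up to the normalizing factor $\prod_{\fp \in R'}(1-\chi(\fp))$, which is nonzero by the assumption $R' \neq \emptyset$.

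A direct calculation on the Eisenstein constituents shows that $\sF$ is an eigenform modulo $T^{r+1}$ for all Hecke operators except the $U_{\fp_i}$ with $\fp_i \in R$, with eigenvalues $T_\fl \mapsto 1 + \chi\epsilon(\fl)$ for $\fl \nmid \fn p$ and $U_\fl \mapsto 1$ for $\fl \in R'$ or $\fl \mid \fn$. The obstruction is the operators $U_{\fp_i}$: since $\chi(\fp_i) = 1$, the weight-1 Hecke polynomial at $\fp_i$ is $(X-1)^2$, with $1$ as a double root, and there is no single scalar eigenvalue making $\sF$ an eigenform modulo $T^{r+1}$ when $r > 1$. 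Following \cite{v}, I would then consider the cyclic $\T$-module $V := \T \cdot \sF$ inside the space of $q$-expansions modulo $T^{r+1}$. Since $\T$ is commutative and $\sF$ cyclic, $V \cong \T/\mathrm{Ann}(\sF)$ is naturally a commutative $E$-algebra, and the quotient map $\T \twoheadrightarrow V$ will be the desired $\varphi$ once we identify $V$ with $W$. Setting $\epsilon_i := U_{\fp_i} - 1$ as operators on $V$, I would show that $V$ is spanned over $E$ by $\sF, T\sF, \ldots, T^{r-1}\sF$ together with $\prod_{i \in S} \epsilon_i \cdot \sF$ for nonempty subsets $S \subseteq \{1, \ldots, r\}$. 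The relations $\epsilon_i^2 = 0$ and $\epsilon_i T = 0$ then follow from $T$-adic order considerations: $(U_{\fp_i}-1)\sF$ already has positive $T$-order, and multiplying by $T$ or by a second $(U_{\fp_i}-1)$ controllably pushes it past $T^{r+1}$ or into the span of the other basis elements.

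The main obstacle is establishing the key relation $\epsilon_1 \cdots \epsilon_r = (-1)^{r+1}\sL_{\an}(\chi) T^r$ in $V$, which is precisely where the analytic $L$-invariant couples to the Hecke-theoretic structure. This amounts to computing $\prod_{i=1}^{r}(U_{\fp_i}-1)\sF \pmod{T^{r+1}}$ and matching it with $(-1)^{r+1}\sL_{\an}(\chi) T^r \sF$. The calculation proceeds by systematically expanding each $U_{\fp_i}$-action on the Eisenstein-series summands of $\sF$, then using the cuspidality of $\sF$ (forced cancellation of constant terms) to isolate the contribution of $L_p(\chi\omega, 1-k)$; its order-$r$ Taylor coefficient at $k=1$, which is the defining ingredient of $\sL_{\an}(\chi)$, emerges as the unique surviving term. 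The sign $(-1)^{r+1}$ arises combinatorially from the expansions of the $r$ factors $U_{\fp_i}-1$. Once these relations are verified, the $E$-algebra isomorphism $V \cong W$ sending $\sF \leftrightarrow 1$ exhibits the desired $\varphi$. The generalizations mentioned in the theorem statement (notably $R' = \emptyset$) require modifying $\sF$, for instance by incorporating auxiliary primes to restore the normalizing $L$-factor, but follow the same template.
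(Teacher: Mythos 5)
Your approach is essentially the same as the paper's: build the semi-cusp form as a combination of Eisenstein series with $\cL(\chi\omega)$ appearing in the coefficient, pass to a genuine cusp form $\sF$, and study the cyclic $\T$-module (Hecke orbit) of $\sF$ in the space of Fourier expansions modulo a power of $T$ rather than looking for a single eigenform. The identification of $V = \T/\mathrm{Ann}(\sF)$ with $W$ via a dimension count, and the way $\sL_{\an}(\chi)$ enters through the order-$r$ Taylor coefficient of $\cL(\chi\omega)$, are also the paper's strategy.

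There is, however, a genuine gap: you work modulo $T^{r+1}$ throughout and assert $V \cong W$, but this fails if $r_{\an}(\chi) := \ord_{s=0}L_p(\chi\omega,s)$ is strictly greater than $r$. That possibility cannot be excluded here, because the entire point of the paper is to prove Conjecture 2 without assuming Conjecture 1; only the inequality $r_{\an}\ge r$ is known unconditionally. When $r_{\an}>r$, the second Eisenstein term $\sF'$ (the one carrying $\cL(\chi\omega)$) vanishes modulo $T^{r+1}$, so $(U_{\fp_i}-1)\sF\equiv 0 \pmod{T^{r+1}}$ for every $\fp_i\in R$. Thus all of your $\epsilon_i$ become $0$ in $V$, which collapses to $E[T]/T^{r+1}$, of dimension $r+1$ rather than $2^r+r-1$. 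Your claimed basis and the isomorphism $V\cong W$ are then both false, and the induced map sends $U_{\fp_i}\mapsto 1$ rather than $1+\epsilon_i$. Note that the theorem is still nontrivial in this regime: $\sL_{\an}(\chi)=0$, so $W = E[T,\epsilon_i]/(T^{r+1},\epsilon_i^2,\epsilon_i T,\epsilon_1\cdots\epsilon_r)$, and one still has to exhibit a homomorphism $\T\to W$ hitting the $\epsilon_i$ correctly.

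The fix, which is what the paper actually does, is to work modulo $T^{r_{\an}+1}$ (equivalently $\pi^{r_{\an}+1}$) rather than $T^{r+1}$. Since $\cL(\chi\omega)$ has order \emph{exactly} $r_{\an}$ at weight 1, the module $\cH$ of Fourier expansions modulo $\pi^{r_{\an}+1}$ retains the leading terms: each $(U_{\fp_i}-1)\sF$ is nonzero there, congruent (up to a unit) to $E_1(1,\chi_{R'\cup\{\fp_i\}})\pi^{r_{\an}}$, and the forms $E_1(1,\chi_{R'\cup J})\pi^{r_{\an}}$ for $\emptyset\neq J\subseteq R$ are linearly independent. One then obtains a $\Lambda$-algebra isomorphism from the ring $W_1$ (defined like $W$ but with $\pi^{r_{\an}+1}$, $(-1)^{r_{\an}}$ and $\sL_{\an}^*(\chi)$ in place of $T^{r+1}$, $(-1)^r$, $\sL_{\an}(\chi)$) onto the image of $\T$ in $\End(\cH)$, and finally one reduces the resulting $\varphi\colon\T\to W_1$ modulo $\pi^{r+1}$ to land in $W$. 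Your proposal needs this intermediate step (or an explicit treatment of the case $r_{\an}>r$) to be complete.
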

If $R'$ is empty, we construct a slightly more complicated homomorphism.  Note that $W$ is a local ring with maximal ideal
$\fm_W = (T, \epsilon_1, \dots, \epsilon_r)$.

\bigskip

\subsection{Construction of a Cohomology Class}
Let $\fm \subset \T$ denote the kernel of the composition of $\varphi$ with the canonical projection \[ W  \longrightarrow W/\fm_W \cong E. \]
  Let $L = \Frac(\T_{(\fm)})$ denote the total ring of fractions of the localization of $\T$ at the prime ideal $\fm$.
Theorems of Wiles and Hida imply the existence of a continuous irreducible Galois representation
\begin{align*}
\rho \colon G_F & \longrightarrow \GL_2(L)  \\
\sigma &\mapsto \mat{a(\sigma)}{b(\sigma)}{c(\sigma)}{d(\sigma)}
\end{align*}
that is unramified outside $\fn p$ and such that for primes $\fl \nmid \fn p$, the characteristic polynomial
of $\rho(\Frob_\fl)$ is
\begin{equation} \label{e:charrho1}
\chr(\rho(\Frob_\fl))(x) = x^2 - \overline{T}_\fl x + \chi \epsilon (\fl),
\end{equation}
where $\overline{T}_\fl$ denotes the image of $T_\fl$ in $L$.

Let $B$ denote the $\T$-module generated by the $b(\sigma)$.   Using the fact that $\varphi(T_\fl) = 1 + \chi\epsilon(\fl)$ together with (\ref{e:charrho1}), we show that after choosing an appropriate basis for $\rho$ the map
\[ \kappa \colon G_F \longrightarrow \overline{B} = B/\fm B \]
given by $\kappa(\sigma) = \overline{b}(\sigma) \cdot \chi^{-1}(\sigma)$ is a cocycle yielding a cohomology class in $H^1(G_F, \overline{B}(\chi^{-1}))$.
For all $\fq \mid p$, the representation $\rho|_{G_\fq}$  is known to be reducible with a certain specified semi-simplification.
This can be used to show that $\kappa$ is unramified outside $R$.

In the case $r=1$,  the injectivity of the restriction map (\ref{e:restrict})
can be used to show that after rescaling by a certain element of $L$, we have $B \subset \fm$.
Applying the homomorphism $\varphi$ to the cocycle $\kappa$  yields a class
$\kappa_\varphi \in H^1_{\fp}(G_F, E(\chi^{-1})).$  The known shape of the local representation $\rho|_{G_\fp}$ can be used to prove  that $\kappa_\varphi$ is cyclotomic.  Using equation (\ref{e:phiup}), one shows that $\res_\fp \kappa = \sL_{\an}(\chi) \cdot o_\fp +   \ell_\fp$, giving the desired result $\sL_{\alg}(\chi) = \sL_{\an}(\chi)$.

In the case $r > 1$, there is an unknown  constant $x_i \in L$ for each place $\fp_i$ such that we have a formula for the restriction of the function
$x_i b(\sigma)$ to $G_{\fp_i}$. In particular we can show that $x_i b(G_{\fp_i}) \subset \fm$.  However, the failure of the injectivity of (\ref{e:restrict}) appears to make it impossible to deduce
that $x_i B \subset \fm$.  In fact, for $r \ge 3$, we believe that this is false.\footnote{If $r=2$ and $F_{\fp_i} \cong \Q_p$ for $i = 1, 2$, then the injectivity of (\ref{e:restrict}) does hold, and one can give a proof of Theorem~\ref{t:main} in this  special case using Theorem~\ref{t:hom} and methods analogous to those of \cite{ddp}.}  In particular, we are unable to show that the cohomology class $\kappa$ is cyclotomic.

As mentioned above, our new method is to apply the orthogonality (\ref{e:orth}) with $\kappa$ and a basis $\{u_i\}$ of
$U_\chi$. We obtain $r$ equations
\[  \sum_{j=1}^{r} (\res_{\fp_j} \kappa)(u_i) =0 \]
in $\overline{B}.$
This implies that \begin{equation} \label{e:detka}
 \det(  (\res_{\fp_j} \kappa)(u_i) ) = 0  \end{equation}
in $B_R / \fm B_R$ since it is the determinant of a matrix whose rows all sum to 0, where $B_R$ is the $\T$-module generated by products
$b(\sigma_1) \cdots b(\sigma_r)$ with $\sigma_i \in G_{\fp_i}$. The fact that $x_i b(G_{\fp_i}) \subset \fm$ implies that
$(\prod_{i=1}^r x_i ) B_R \subset \fm^r$, and scaling (\ref{e:detka}) by $\prod_{i=1}^r x_i$ yields an equation in $\fm^r/\fm^{r+1}$.  We can apply
the homomorphism $\varphi$ to this equation to yield a formula in the 1-dimensional $E$-vector space $\fm_W^r = E \cdot T^r$.  An explicit computation shows that this equality is
\begin{equation} \label{e:explicitdet}
  (-1)^{r+1} \sL_{\an} \cdot \det(o_i(u_j)) + \det(\ell_i(u_j))  = 0.
\end{equation}
Equation (\ref{e:explicitdet}) is  equivalent to the desired result $\sL_{\an}(\chi) = \sR_p(\chi)$.

\section{Orthogonality Between Cohomology and Units} \label{s:orthogonal}

Recall that $H^1_R(G_F, E(\chi^{-1}))$ denotes the group of cohomology classes unramified outside $R$.  We begin by proving Proposition~\ref{p:orth} stated in the introduction.

\begin{prop}  \label{p:orth2}
Let $\kappa \in H^1_R(G_F, E(\chi^{-1}))$ and $u \in U_\chi$.   We have
\[
 \sum_{i=1}^{r} (\res_{\fp_i} \kappa)(u) = 0.
\]
\end{prop}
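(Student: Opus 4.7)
The plan is to apply global class field theory over $H$. Since $\chi|_{G_H}=1$, the restriction $\kappa_H := \kappa|_{G_H}\colon G_H \to E$ is a continuous homomorphism, and the cocycle identity for $\kappa$ translates into the Galois equivariance
\[ \kappa_H(\sigma\tau\sigma^{-1}) \;=\; \chi^{-1}(\sigma)\,\kappa_H(\tau) \qquad (\sigma\in G_F,\ \tau\in G_H). \]
By local reciprocity, $\kappa_H$ corresponds to a family of continuous characters $\psi_w\colon H_w^* \to E$ assembling into a continuous character $\psi\colon \mathbb{A}_H^* \to E$ that vanishes on the principal ideles $H^* \subset \mathbb{A}_H^*$. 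Extending $\psi$ by $E$-linearity to $\mathbb{A}_H^* \otimes_\Z E$ and applying it to the principal idele attached to $u \in U_\chi \subset U \otimes E$ yields
\[ 0 \;=\; \sum_{w}\psi_w(u), \]
where $w$ ranges over all places of $H$.

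Next I would show that only primes of $H$ above $R$ contribute. For archimedean $w$: $H$ is CM, so $H_w \cong \mathbf{C}$ is connected and $\psi_w=0$. For finite $w\nmid p$ with $v$ the place of $F$ below $w$, the hypothesis $v\notin R$ gives $[\kappa|_{I_v}]=0$; writing this as a coboundary and restricting to $I_w\subset G_H$ kills it pointwise (since $\chi\equiv 1$ on $G_H$), so $\kappa_H|_{I_w}\equiv 0$, $\psi_w$ vanishes on $\cO_{H_w}^*$, and $\psi_w(u)=0$ since $u$ is a $p$-unit. The most delicate case is $w$ above some $\fp\in R'$, where $u$ may have nonzero valuation: here one needs $\psi_w\equiv 0$ on all of $H_w^*$. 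I would use inflation--restriction applied to $I_\fp \subset G_\fp$ together with the hypotheses that $\kappa$ is unramified at $\fp$ and $\chi(\fp)\ne 1$ to conclude $H^1(G_\fp/I_\fp, E(\chi^{-1})^{I_\fp})=0$ (handling both the case $\chi$ unramified at $\fp$, where $\Frob_\fp$ acts on $E$ by the nonidentity scalar $\chi^{-1}(\fp)$, and the case $\chi$ ramified, where $E(\chi^{-1})^{I_\fp}=0$). It follows that $[\kappa|_{G_\fp}]=0$ in $H^1(G_\fp, E(\chi^{-1}))$; writing $\kappa|_{G_\fp}=\partial c$ for some $c\in E$ and restricting to $G_\fP \subset G_H$ then forces $\kappa_H|_{G_\fP}\equiv 0$, so $\psi_\fP\equiv 0$ on all of $H_\fP^*$. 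Equivariance propagates this vanishing to every $w\mid\fp$.

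After these reductions, reciprocity becomes a sum over primes of $H$ above $R$. For $\fp_i\in R$, the condition $\chi|_{G_{\fp_i}}=1$ forces $\fp_i$ to split completely in $H/F$, so the primes above $\fp_i$ are exactly $\{\sigma\fP_i : \sigma \in G\}$. Combining the standard equivariance $\psi_{\sigma\fP_i}(\sigma x) = \chi^{-1}(\sigma)\psi_{\fP_i}(x)$ with the defining property $\sigma^{-1}u = \chi(\sigma)u$ of $U_\chi$ (extended $E$-linearly to $H^*\otimes E$) gives
\[ \psi_{\sigma\fP_i}(u) \;=\; \chi^{-1}(\sigma)\,\psi_{\fP_i}(\sigma^{-1}u) \;=\; \chi^{-1}(\sigma)\chi(\sigma)\,\psi_{\fP_i}(u) \;=\; (\res_{\fp_i}\kappa)(u). \]
Summing over $\sigma \in G$ for each $\fp_i$ and then over $i=1,\dotsc,r$ transforms the reciprocity equation into $|G|\sum_{i=1}^{r}(\res_{\fp_i}\kappa)(u)=0$, which is the desired identity after dividing by $|G|$. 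The chief technical obstacle is the cohomological vanishing at primes in $R'$; once that step is secured, the remainder is a clean combination of global reciprocity with the $\chi^{-1}$-eigenspace hypothesis.
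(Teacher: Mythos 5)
Your argument follows the same global strategy as the paper's second proof of this proposition --- restriction to $G_H$, global reciprocity over $H$, equivariance under $G=\Gal(H/F)$, and the observation that $\fp_i$ splits completely in $H/F$ because $\chi(G_{\fp_i})=1$. It is correct, but it handles one step differently, and the difference is worth noting.

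The paper evaluates reciprocity on elements of $\cO_{R_H}^*$, the $R_H$-units of $H$, and then observes that the $u_\chi = \sum_{\sigma\in G}\sigma(u)\otimes\chi(\sigma)$ for $u\in\cO_{R_H}^*$ generate $U_\chi$. Because $u$ is an $R_H$-unit, the paper's test id\`ele $\pi_u$ (component $u$ at $R_H$, $1$ elsewhere) lies in the class of the principal id\`ele and the contributions outside $R_H$ disappear for free; in particular nothing special needs to be said at primes above $R'$. You instead take a general $p$-unit $u$ in $U_\chi$, apply reciprocity to its principal id\`ele, and then must argue the vanishing of $\psi_w(u)$ at $w\mid\fp$ for $\fp\in R'$, where $u$ may have nonzero valuation. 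Your cohomological argument there is sound: by inflation--restriction and the unramifiedness of $\kappa$ at $\fp$, the class $[\kappa|_{G_\fp}]$ is inflated from $H^1(G_\fp/I_\fp,\,E(\chi^{-1})^{I_\fp})$, and this group vanishes because $\Frob_\fp$ acts by $\chi^{-1}(\fp)\neq 1$ on $E$ (or $E(\chi^{-1})^{I_\fp}=0$ in the ramified case); writing $\kappa|_{G_\fp}$ as a coboundary with value in $E$ and restricting to $G_\fP\subset G_H$ then kills $\psi_\fP$ pointwise. What your route buys is that you never have to invoke the (true but unproved in the paper) fact that $R_H$-units generate $U_\chi$; what it costs is the extra local vanishing argument at $R'$. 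The two are equivalent in substance, and both are shorter than the paper's first proof via Poitou--Tate.
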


We will provide two proofs.  The first is more conceptual and invokes Poitou--Tate duality and the Kummer isomorphism, though we state without proof certain identifications that are needed.  The second proof is rather more direct and relies only on class field theory.

\begin{proof}[Proof 1 of Proposition~\ref{p:orth2}]

As explained in  \cite[Prop.~1.4]{ddp}, Hilbert's Theorem 90 yields isomorphisms\footnote{In (\ref{e:globalkummer}), $H^* \hat{\otimes} E = 
(\varprojlim (H^* \otimes \cO_E/p^n)) \otimes_{\cO_E} E$, and similarly in (\ref{e:localkummer}).}
\begin{align}
\delta \colon  (H^* \hat{\otimes} E)^{\chi^{-1}} &\cong H^1(G_F, E(\chi)(1)), \label{e:globalkummer}  \\
\delta_v \colon  (H_w^* \hat{\otimes} E)^{\chi^{-1}} &\cong H^1(G_v, E(\chi)(1)) . \label{e:localkummer}
\end{align}
Define
\[
H^1_{R}(G_F, E(\chi)(1)) \subset H^1(G_F, E(\chi)(1))
\]
to be the subspace of classes $\kappa$ such that $\res_v \kappa \in H^1(G_v, E(\chi)(1))$ lies in the image of $(\cO_{H, w}^* \hat{\otimes} E)^{\chi^{-1}}$ under $\delta_v$ for each $v \not\in R$.  It is then clear that (\ref{e:globalkummer}) induces an isomorphism
\begin{equation} \label{e:kummer}
\delta \colon U_\chi \cong H^1_R(G_F, E(\chi)(1)).
\end{equation}

Recall that for each place $\fp_i \in R$ there is a perfect Tate duality pairing
  \[ \langle \ , \  \rangle_{\fp_i}  \colon \xymatrix{ H^1(G_{\fp_i}, E(\chi)(1)) \times H^1(G_{\fp_i}, E(\chi^{-1})) \ar[r] & E.}
  \]
Poitou--Tate duality implies that the images of $H^1_R(G_F, E(\chi^{-1}))$  and $H^1_R(G_F, E(\chi)(1))$
under the product of the restriction maps $\res_{\fp_i}$   are orthogonal   under the local Tate duality map
\begin{equation} \label{e:orth}
 \xymatrix{ \langle \ , \ \rangle_R \colon \prod_{i=1}^{r} H^1(G_{\fp_i}, E(\chi^{-1})) \times \prod_{i=1}^{r} H^1(G_{\fp_i}, E(\chi)(1)) \ar^(.82){\sum \langle \ , \  \rangle_{\fp_i} }[rr]& & E.}
\end{equation}
The desired result follows from this orthogonality and the fact that \begin{equation} \label{e:texplicit}
 \langle \kappa, \delta(u) \rangle_{\fp_i} = (\res_{\fp_i} \kappa)(u). \end{equation}
 \end{proof}

We now present an alternate and more direct proof of (\ref{e:explicito}) using only general facts from class field theory.

\begin{proof}[Proof 2 of Proposition~\ref{p:orth2}]
Since $H$ is the fixed field of $\chi$, the restriction of $\kappa$ to $G_H$ yields a class
\[ \res_H \kappa \in H^1(G_H, E(\chi^{-1}))^G = \Hom_{\cts}(G_H, E)^{G = \chi^{-1}}, \]
where the group on the right is the $E$-vector space of continuous group homomorphisms
$f\colon G_H \rightarrow E$ such that
 \begin{equation} \label{e:inv}
 f(\sigma h \sigma^{-1}) = \chi^{-1}(\sigma) f(h) \qquad \text{for } \sigma \in G_F, h \in G_H.
 \end{equation}
   Since $\kappa$ is unramified outside $R$, the homomorphism
$\res_H \kappa$ is trivial on the inertia group $I_w \subset I_v$ for each place $v \not \in R$, where $w$ is the place of $H$ specified by the choice of $G_v$.
From (\ref{e:inv}), it follows that $\res_H \kappa$ is trivial on the inertia group $I_w$ for every place $w \not \in R_H$, where $R_H$ denotes the set of places of $H$ lying above those in $R$.  Therefore the homomorphism $\res_H \kappa$ factors through the maximal abelian extension of $H$ unramified outside $R_H$, which we denote by $K$.
By class field theory, we have an isomorphism
\begin{equation} \label{e:galkh}
 \rec \colon \A_H^* / \overline{H^* \prod_{w \not \in R_H} \cO_{H,w}^*} \longrightarrow \Gal(K/H),
 \end{equation}
 where $\A_H$ is the ring of adeles of $H$ and by convention $\cO_{H,w}^* = \C^*$ if $w$ is a complex place.

Let $u \in \cO_{R_H}^*$, the group of $R_H$-units of $H$.  The id\`ele
\[ \pi_u =(u, u, \dotsc, u, 1, 1, \dotsc) \]
with component 1 at each $w \not \in R_H$ and component $u$ at each $w \in R_H$ is clearly trivial in the quotient (\ref{e:galkh}).
The fact that $\res_H \kappa$ factors through $\Gal(K/H)$ therefore implies that
\[  0 = (\res_H \kappa)(1) = (\res_H \kappa)(\rec(\pi_u)) = \sum_{w \in R_H} (\res_w \kappa)(u) = \sum_{i=1}^{r} \sum_{\sigma \in G} (\res_{\sigma(\fP_i)} \kappa )(u). \]
Equation (\ref{e:inv}) implies that
\[  \res_{\sigma(\fP_i)}(u) = \chi^{-1}(\sigma) \res_{\fP_i}(\sigma^{-1}(u)),\] and noting that via (\ref{e:localembed})
we have $\res_{\fP_i} = \res_{\fp_i}$,
we obtain
\[  \sum_{i=1}^{r} (\res_{\fp_i} \kappa)(u_\chi) = 0 \qquad \text{where } u_\chi = \sum_{\sigma \in G} \sigma(u) \otimes \chi(\sigma). \]
Since  elements of the form $u_\chi$ for $u \in \cO_{R_H}^*$ generate the $E$-vector space $U_\chi$, the desired result follows.
\end{proof}

We conclude this section by proving a crucial injectivity result from global to local cohomology groups.

\begin{prop}  \label{p:unr} The restriction map
\[ \prod_{i=1}^r \res_{I_{\fp_i}} \colon H^1_R(G_F, E(\chi^{-1})) \longrightarrow \prod_{i=1}^r H^1(I_{\fp_i}, E) \]
is injective.
\end{prop}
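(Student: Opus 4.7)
The plan is to show that any $\kappa$ in the kernel of $\prod_i \res_{I_{\fp_i}}$ is actually unramified \emph{everywhere}, and then to use class field theory and inflation-restriction to conclude that $\kappa = 0$.

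First I would observe that for each $\fp_i \in R$, the character $\chi$ is unramified at $\fp_i$ with $\chi(\Frob_{\fp_i}) = 1$, so $\chi^{-1}|_{G_{\fp_i}}$ is trivial and $E(\chi^{-1})|_{G_{\fp_i}} = E$ as a trivial module. The hypothesis $\res_{I_{\fp_i}} \kappa = 0$ therefore says precisely that $\kappa$ is unramified at $\fp_i$. Combined with the defining condition of $H^1_R$, this gives a class $\kappa$ which is unramified at every place of $F$ (finite and archimedean, though the archimedean condition is automatic after restricting to $H$ since $H$ is totally complex).

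Next, I would restrict to $G_H$. Since $\chi|_{G_H}$ is trivial, we have
\[
\res_H \kappa \in H^1(G_H, E(\chi^{-1})|_{G_H}) = \Hom_{\cts}(G_H, E),
\]
and because $I_w \subseteq I_v$ whenever $w$ of $H$ lies over $v$ of $F$, this homomorphism is unramified at every finite place of $H$. Since $H$ is CM, all its archimedean places are complex, hence automatically unramified. Thus $\res_H \kappa$ factors through $\Gal(K/H)$, where $K$ is the Hilbert class field of $H$. The Galois group $\Gal(K/H) \cong \Cl(H)$ is finite, and $E$ is torsion-free, so $\res_H \kappa = 0$.

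Finally I would invoke the inflation-restriction sequence for $G = \Gal(H/F)$:
\[
0 \longrightarrow H^1(G, E(\chi^{-1})) \longrightarrow H^1(G_F, E(\chi^{-1})) \longrightarrow H^1(G_H, E(\chi^{-1}))^G.
\]
Since the image of $\kappa$ on the right vanishes, $\kappa$ is inflated from $H^1(G, E(\chi^{-1}))$. But $G$ is finite and $|G|$ is invertible in the characteristic-zero coefficient module $E(\chi^{-1})$, so $H^1(G, E(\chi^{-1})) = 0$, forcing $\kappa = 0$. There is no serious obstacle here; the only point that needs a moment of care is verifying that the archimedean conditions are vacuous after passing to $H$ (which follows from $H$ being CM), and that $\chi$ is genuinely trivial on all of $G_{\fp_i}$ (not merely on inertia) so that the hypothesis on $I_{\fp_i}$ really does force unramifiedness at $\fp_i$.
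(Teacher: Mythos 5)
Your proposal is correct and follows essentially the same line as the paper: any $\kappa$ in the kernel is unramified everywhere, its restriction to $G_H$ factors through $\Gal(K/H)$ for $K$ the Hilbert class field (a finite group), so it vanishes since $E$ is torsion-free, and then the vanishing of $H^1(G, E(\chi^{-1}))$ forces $\kappa = 0$. The only cosmetic difference is that the paper records $\res_H$ as an isomorphism (also invoking vanishing of $H^2(G,-)$), whereas you only need the injectivity coming from the five-term exact sequence; your version is marginally leaner but the content is the same.
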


As mentioned in the introduction, the fact that in the general case this injectivity fails to hold when
$\prod_{i=1}^r \res_{I_{\fp_i}}$ is replaced by a single $\res_{I_{\fp_i}}$ (or even a single $\res_{\fp_i}$) represents
an important distinction from the rank 1 setting.

\begin{proof}
The proposition states that there are no nonzero classes in $H^1(G_F, E(\chi^{-1}))$ that are unramified everywhere.
To see this, first note that the restriction map
\[ \res_H \colon H^1(G_F, E(\chi^{-1})) \longrightarrow H^1(G_H, E)^{G = \chi^{-1}} \]
is an isomorphism, since the preceding and following terms in the inflation-restriction exact sequence  are the groups $H^i(G, E(\chi^{-1}))$ for $i=1, 2$.  These groups vanish
since $G= \Gal(H/F)$ is finite and the $E$-vector space $E(\chi^{-1})$ is torsion-free.

 If $\kappa$ is unramified everywhere, then as in the second proof of Proposition~\ref{p:orth2} we see that $\res_H \kappa$ factors through the maximal abelian unramified extension of $H$.  Since this extension (the Hilbert class field of $H$) is a finite extension of $H$, it follows that $\res_H \kappa =0$ once again using the fact that $E$ is torsion-free.  The fact that $\res_H$ is an isomorphism then implies that $\kappa = 0$ as desired.
\end{proof}

\section{Homomorphism on the Hida Hecke Algebra} \label{s:hhh}

Our goal in this section is to prove Theorem~\ref{t:hom} from the introduction and its various generalizations that are needed to handle all cases.  This involves rather technical computations involving the Hecke action on certain explicitly defined Hida families.  The reader who is willing to take Theorem~\ref{t:hom} as a black box and is interested in the deduction of the equality $ \sL_{\an}(\chi) = \sR_p(\chi) $ from this theorem can skip ahead to \S \ref{s:ccc} without any loss of continuity.

We first recall the notation and conventions of \cite[\S2 and \S3]{ddp} and \cite{v} for Hida families of Hilbert modular forms for $F$.

\subsection{Notation on Hida Families}

Let  $\Lambda = \cO_E[[T]]$ where, as in the introduction,  $E$ is a finite extension of $\Q_p$ containing the values of the character $\chi$.
 For each  $k \in \Z_p$ we have a ``specialization to weight $k$" $\cO_E$-algebra homomorphism
 \begin{equation} \label{e:nukdef}
  \nu_k\colon \Lambda \longrightarrow \cO_E \text{ given by  } T \mapsto u^{k-1} - 1,
  \end{equation}
   where $u$ is a topological generator of $1 + 2p\Z_p$ (for instance, we may choose $u = 1+p$ if $p$ is odd and $u=5$ if $p=2$). Under this convention, specialization to weight $1$ corresponds to the  augmentation map $T \mapsto 0$.  Let $\Lambda_{(1)} = \cO_E[[T]]_{(T)}$ denote the localization of $\Lambda$ in weight 1, i.e.\ the localization of $\Lambda$ with respect to the prime ideal $(T) = \ker \nu_1$.  Note that $p$ is invertible in $\Lambda_{(1)}$, so in particular $\Lambda_{(1)}$ is an $E$-algebra.  Furthermore $\Lambda_{(1)}$ is a DVR and we choose the uniformizer \[ \pi = \frac{1}{\log_p u} T. \]  This uniformizer is normalized to have the following property making translation between the $k$-variable and the $\pi$-variable straightforward.  Suppose $h \in \Lambda_{(1)}$ can be written $h = \pi^n h'$ where $h' \in \Lambda_{(1)}^*$, and let $f \colon U \rightarrow E$ be defined for a sufficiently small neighborhood $U \subset \Z_p$ containing $1$ by $f(k) = \nu_k(h)$.  Then $f$ has a zero of order $n$ at $k=1$ and \[ f^{(n)}(1)/n! = \nu_1(h'). \]

   Next we recall the $\Lambda$-adic cyclotomic character.  This is the character $\epsilon \colon G_F \rightarrow \Lambda^*$ satisfying
  $\nu_k(\epsilon(\sigma)) = \langle\epsilon_{\cyc}(\sigma)\rangle^{k-1}$ for any $k \in \Z_p$.  Here \begin{equation} \label{e:cyclotomic}
  \epsilon_{\cyc}\colon G_F \longrightarrow \Z_p^* \end{equation}  is the usual cyclotomic character defined by $\sigma(\zeta) = \zeta^{\epsilon_{\cyc}(\sigma)}$ for any $p$-power root of unity $\zeta$.  The character $\epsilon$ is given explicitly by the formula
  \begin{equation} \label{e:lambdaeps}
   \epsilon(\sigma) = (1 + T)^{\log_p \langle \epsilon_{\cyc}(\sigma) \rangle/\log_p u}. \end{equation}

Recall that $\fn$ denotes the conductor of the character $\chi$.  We denote by $\cM(\fn, \chi)$ the $\Lambda$-module of $\Lambda$-adic Hilbert modular forms for $F$ with tame level $\fn$ and  character $\chi$.  For each $\sF \in \cM(\fn, \chi)$ and integer $k \ge 2$, the specialization $\nu_k(\sF)$ lies in the space $M_k(\fn p, \chi\omega^{1-k})$ of Hilbert modular forms for $F$ of weight $k$, level $\fn p$, and character $\chi\omega^{1-k}$.  The subspace of cusp forms in $\cM(\fn, \chi)$ is denoted $\cS(\fn, \chi)$.
The $\Lambda$-module $\cM(\fn, \chi)$ is equipped with an action of Hecke operators $T_\fl$ for primes $\fl \nmid \fn p$ and $U_\fl$ for $\fl \mid p$.  Following Hida, we let
\[ e = \lim_{n \rightarrow \infty} \left(\prod_{\fp \mid p} U_\fp \right)^{n!} \] be the ordinary projector and denote by
\[ \cM^o(\fn, \chi) = e \cM(\fn, \chi), \qquad \cS^o(\fn, \chi) = e \cS(\fn, \chi) \]
the spaces of Hida families and cuspidal Hida families, respectively.  We denote by $\tilde{\T}$ and $\T$ the $\Lambda$-algebras of Hecke operators acting on
$ \cM^o(\fn, \chi)$ and $ \cS^o(\fn, \chi)$, respectively.

Of particular interest to us will be the Eisenstein series.  Let $k \ge 1$ be an integer and let $\eta$  be a narrow ray class character of $F$ such that
$\eta$ is totally odd or totally even, with parity agreeing with $k$.  Let $\fb$ denote the modulus of $\eta$, which we do not assume to equal the conductor of $\eta$ (i.e.\ $\eta$ need not be a primitive character).  Excluding the exceptional case where $F=\Q, k=2,$ and $\fb = 1$, there is an Eisenstein  series $E_k(1, \eta)$ with normalized Fourier coefficients given by
\[ c(\fa, E_k(1, \eta)) = \sum_{\fr \mid \fa, (\fr, \fb)=1} \eta(\fr)  \N \fr^{k-1} \]
for integral ideals $\fa \subset \cO_F$ and constant coefficients (assuming $\fb \neq 1$ or $k \neq 1$)\footnote{If  $\fb = 1$ and $k=1$, the constant coeffcients are given by
\[ c_\lambda(0, E_1(1, \eta)) = 2^{-[F:\Q]}(L(\chi, 0) + \chi^{-1}(\lambda)L(\chi^{-1}, 0)). \]}
\[ c_\lambda(0, E_k(1, \eta)) = 2^{-[F:\Q]}L_\fb(\eta, 1-k), \qquad \lambda \in \Cl^+(F), \]
where the subscript $\fb$ emphasizes that the Euler factors at primes dividing $\fb$ are removed.
(For details regarding our conventions on Hilbert modular forms and their Fourier coefficients, see \cite[\S2]{ddp}.)
These classical Hilbert modular forms interpolate $p$-adically in the sense that there is an Eisenstein series
 $\sE(1, \chi) \in  \cM^o(\fn, \chi)$ such that $\nu_k(\sE(1, \chi)) = E_k(1, \chi \omega^{1-k})$ for all $k \ge 1$, where
 the character $\chi\omega^{1-k}$ is understood to always have modulus divisible by all primes above $p$ (even if $k \equiv 1\pmod{p-1}$).
The constant coefficients of $\nu_k(\sE(1, \chi))$ can be expressed as $2^{-[F:\Q]} L_p(\chi\omega, 1-k)$.

\subsection{Construction of a Cusp Form} \label{s:ccf}

We now recall the construction of a certain Hida family of cusp forms from \cite{ddp} and \cite{v}.  For any integer $k$,
we let $\Lambda_{(k)} = \Lambda_{(T - u^{k-1} - 1)}$ denote the localization of $\Lambda$ in weight k, i.e.\ the
localization at the prime ideal $(T - u^{k-1} - 1) = \ker \nu_k$.  Similarly we let $\cM^o(1, \omega^{-1})_{(k)}$
denote the localization of the space of Hida families of modular forms with respect to weight $k$, i.e.\
\[ \cM^o(1, \omega^{-1})_{(k)} = \cM^o(1, \omega^{-1}) \otimes_{\Lambda} \Lambda_{(k)}. \]

\begin{lemma}[\cite{v}, Theorem 2]  \label{l:kevin}
There exists a Hida family $\sG \in \cM^o(1, \omega^{-1})_{(0)}$ with the property that $\nu_0(\sG) = 1$ and
$c_\lambda(0, \sG) = 1$ for all $\lambda \in \Cl^+(F)$.
\end{lemma}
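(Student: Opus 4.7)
The plan is to obtain $\sG$ by rescaling the $\Lambda$-adic Eisenstein family $\sE(1, \omega^{-1})$, exploiting the simple pole of the Deligne--Ribet $p$-adic zeta function at $s=1$ in order to kill the non-constant Fourier coefficients in weight $0$ while simultaneously normalizing the constant term. Passing to the localization $\cM^o(1, \omega^{-1})_{(0)}$ is essential, because the present tame character $\chi = \omega^{-1}$ is precisely the case where $\chi\omega = 1$ and the interpolation formula for $L_p(\chi\omega, s)$ fails analyticity.

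Concretely, the weight-$k$ specialization of $\sE(1, \omega^{-1})$ is $E_k(1, \omega^{-k})$, whose non-constant Fourier coefficients
\[ c(\fa, E_k(1, \omega^{-k})) = \sum_{\fr \mid \fa,\, (\fr, p) = 1} \omega^{-k}(\fr)\,\N\fr^{k-1} \]
interpolate into a $\Lambda$-element specializing at $k = 0$ to the finite sum $\sum_{\fr \mid \fa,\, (\fr, p) = 1}\N\fr^{-1}$. The constant terms $c_\lambda(0, \nu_k\sE(1, \omega^{-1})) = 2^{-[F:\Q]} L_p(1, 1-k)$ are, by the explicit formula recalled in the introduction, identical at every cusp $\lambda \in \Cl^+(F)$, and they possess a simple pole at $k = 0$; thus the $\Lambda$-adic constant term lies in $\Lambda_{(0)}$ with a simple pole at the maximal ideal.

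The construction is then a two-step rescaling. Let $\pi$ be a uniformizer of the DVR $\Lambda_{(0)}$ at weight $0$, and set $\sG_0 := \pi \cdot \sE(1, \omega^{-1})$. The simple pole in the constant term is exactly cancelled, so $\alpha := c_\lambda(0, \sG_0) \in \Lambda_{(0)}^{*}$ is a unit, independent of $\lambda$, while the non-constant Fourier coefficients $\pi \cdot c(\fa, \sE(1, \omega^{-1}))$ vanish under $\nu_0$ because $c(\fa, \sE(1, \omega^{-1}))$ is regular at weight $0$ and $\nu_0(\pi) = 0$. Now set $\sG := \sG_0/\alpha \in \cM^o(1, \omega^{-1})_{(0)}$. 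Then $c_\lambda(0, \sG) = \alpha/\alpha = 1$ exactly for every $\lambda \in \Cl^+(F)$, and $\nu_0(c(\fa, \sG)) = 0$ for every $\fa \neq 0$; hence $\nu_0(\sG) = 1$ as desired.

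The principal technical input, and the main obstacle, is establishing the precise pole structure: that $L_p(1, s)$ has a simple pole at $s = 1$ with nonvanishing residue. For $F = \Q$ this is the classical Kubota--Leopoldt pole of $\zeta_p$; for a general totally real field it is extracted from the explicit constant-term computation in the Deligne--Ribet construction of $L_p$ via Eisenstein measures. Granted this pole, the remainder is a routine scaling argument in the DVR $\Lambda_{(0)}$, using crucially the $\lambda$-independence of $c_\lambda(0, E_k(1, \eta))$ so that a single scalar $\alpha$ normalizes every constant term to $1$ simultaneously.
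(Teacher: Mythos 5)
Your proposed argument is exactly the \emph{conditional} construction carried out in \cite{ddp}, and the paper itself flags the issue: it states immediately after the lemma that this Eisenstein-series argument works ``under the assumption of Leopoldt's conjecture,'' and that the unconditional statement required the genuinely different argument of \cite{v}. The gap is hidden in your claim that the residue of $L_p(1,s)$ at $s=1$ is nonzero. By the $p$-adic class number formula (Colmez), the residue of $\zeta_{F,p}(s)$ at $s=1$ is, up to an explicit nonzero factor, the $p$-adic regulator $R_{F,p}$, and Leopoldt's conjecture for $(F,p)$ is precisely the statement $R_{F,p}\neq 0$. So for a general totally real $F$ this nonvanishing is an open problem, not something ``extracted from the explicit constant-term computation in the Deligne--Ribet construction'' -- that construction produces a pseudo-measure and hence at most a simple pole, but says nothing about whether the residue vanishes.

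If Leopoldt fails, your construction breaks at the normalization step: the element $\alpha = c_\lambda(0,\sG_0) = \pi \cdot 2^{-[F:\Q]}\cL(\omega)$ is then \emph{not} a unit in $\Lambda_{(0)}$ (it has $\nu_0(\alpha)=0$), so you cannot divide by it, and the resulting $\sG$ would fail to lie in $\cM^o(1,\omega^{-1})_{(0)}$, let alone satisfy $c_\lambda(0,\sG)=1$. The actual proof of this lemma in \cite{v} avoids evaluating the residue of the $p$-adic zeta function at $s=1$ altogether and therefore does not touch Leopoldt's conjecture; it proceeds along different lines entirely. Your argument should be regarded as a proof of the weaker, conditional statement only, and the passage claiming the pole structure is established unconditionally needs to be retracted.
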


Lemma~\ref{l:kevin} was proved in \cite{ddp} under the assumption of Leopoldt's conjecture using Eisenstein series, but it was demonstrated unconditionally in \cite{v}.
We write \[ G_k = \nu_k(\sG) \in M_k(p, \omega^{-k}). \]
Now, for each integer $k \ge 1$, we define a modular form $F_k \in M_k(\fn p, \chi \omega^{1-k})$.  If $R'$ is not empty (we call this case 1), let
 \begin{equation} \label{e:fkdef}
  F_k = E_k(1, \chi \omega^{1-k}) - E_1(1, \chi_{R'})\cdot G_{k-1} \cdot \frac{L_p(\chi\omega, 1-k)}{L(\chi_{R'}, 0)}.
  \end{equation}
Here $\chi_{R'}$ denotes the character $\chi$ viewed with modulus divisible by all primes in $R'$, so
\[ L(\chi_{R'}, 0) = L(\chi, 0) \prod_{\fp \in R'}( 1- \chi(\fp)) \]
is equal (up to the constant $2^{-[F:\Q]}$) to the value of the constant terms of $E_1(1, \chi_{R'})$.  By construction, $F_k$ has constant terms equal to 0.
If $R'$ is empty (this setting will be subdivided further into two cases, case 2 and case 3) we let
 \begin{align}
  F_k & = E_k(1, \chi\omega^{1-k}) - E_1(1, \chi)\cdot G_{k-1} \cdot \frac{L_p(\chi \omega, 1-k)}{L(\chi,0)}  \nonumber
 \\ &  \ \ \ \ \
+ E_k(\chi, \omega^{1-k}) \cdot \frac{L_p( \chi\omega, 1-k)}{L(\chi,0)} \cdot \frac{L(\chi^{-1},0)}{L_p(\chi^{-1}\omega, 1-k)}. \label{e:fkcase2}
 \end{align}
 Again $F_k$ has constant terms equal to 0.

The forms $F_k$ interpolate to Hida families.  Note that \[ \nu_k(\sG((1+T)u^{-1} - 1)) = \nu_{k-1}(\sG(T)). \]
Therefore, in case 1 the $\Lambda$-adic family
\[ \tilde{\sF} = \sE(1, \chi) - E_1(1, \chi_{R'})\sG((1+T)u^{-1} - 1)) \cdot \frac{\cL(\chi \omega)}{L(\chi_{R'}, 0)}\]
satisfies $\nu_k(\tilde{\sF}) = F_k$ for all positive integers $k$ in a neighborhood of $1$ in $\Z_p$, where $\cL(\chi \omega) \in \Lambda_{(1)}$ is the element such that $\nu_k(\cL(\chi \omega)) = L_p(\chi \omega, 1-k)$.
Similarly, if $R' = \phi$ we define
\begin{equation} \label{e:ftilde}
 \tilde{\sF} = \sE(1, \chi) - E_1(1, \chi)\cdot \sG((1+T)u^{-1} - 1)) \cdot \frac{\cL(\chi \omega)}{L(\chi,0)} + \sE(\chi, 1) \cdot \cw,
 \end{equation}
where
\[
\cw = \frac{ \cL(\chi \omega)}{\cL(\chi^{-1} \omega)} \cdot \frac{L(\chi^{-1},0)}{ L(\chi,0)}\in \Frac(\Lambda)
\]
satisfies
\begin{equation} \label{e:cwdef}
 \nu_k(\cw) = \frac{L_p( \chi\omega, 1-k)}{L(\chi,0)} \cdot \frac{L(\chi^{-1},0)}{L_p(\chi^{-1}\omega, 1-k)}
 \end{equation}
for all  $k \in \Z_p$ with $L_p(\chi^{-1}\omega, 1-k) \neq 0$. In our calculations, we will require that the $\Lambda$-adic form $\tilde{\sF}$ is regular in weight $1$, i.e. $\sF \in \cM^o(\fn, \chi)_{(1)}.$  This will be the case unless $\cW$ has a pole in weight 1, i.e. if
\[ \ord_{\pi} \cW = r_{\an}(\chi) - r_{\an}(\chi^{-1}) < 0.\]
(Of course, Conjecture~\ref{c:order} implies that $r_{\an}(\chi) = r(\chi) = r_{\chi^{-1}} = r_{\an}(\chi)$, so it should be the case that $\ord_{\pi} \cW = 0$; however we are proving Conjecture~\ref{c:gross} without assuming Conjecture~\ref{c:order}, so we need to consider the possibility  $\ord_{\pi} \cW < 0$.)
Now, swapping $\chi$ and $\chi^{-1}$ has the effect of inverting $\cW$.  Therefore, in the case that $\cW$ has a pole at $k=1$, it suffices instead to assume that $\cW$ has a zero at $k=1$ and to prove Conjecture~\ref{c:gross} for $\chi^{-1}$ (i.e. to prove that $\sL_{\an}(\chi^{-1}) = \sR_p(\chi^{-1})$).  Therefore, we assume that $\ord_\pi \cW \ge 0$ and subdivide the setting $R' = \phi$ into two cases:
\begin{itemize}
\item Case 2: $\nu_1(\cW) \neq 0$; we must prove $\sR_p(\chi) = \sL_{\an}(\chi)$.
\item Case 3: $\nu_1(\cW) = 0$; we must prove $\sR_p(\chi) = \sL_{\an}(\chi) = 0$ and $\sR_p(\chi^{-1}) = \sL_{\an}(\chi^{-1})$.
\end{itemize}

Now, the $\Lambda$-adic family of modular forms $\tilde{\sF}$ has been constructed such that its constant coefficients at $\infty$ vanish---in the terminology of \cite{ribet}, $\tilde{\sF}$ is a ``semi-cusp form."
The following result was proved in \cite[Corollary 2.10 and Proposition 3.4]{ddp}.
\begin{theorem}  There exists a Hecke operator $t \in \tilde{\T}_{(1)}$ such that
$\nu_1(t)(E_1(1, \chi_S)) = E_1(1,\chi_S)$ and such that $\sF = t \cdot e \cdot \tilde{\sF}$ is a cuspidal Hida family, i.e. $\sF \in \cS^o(\fn, \chi)_{(1)}$.
\end{theorem}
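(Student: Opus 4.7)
The plan is to construct $t$ as an idempotent in the semi-local Hecke algebra $\tilde{\T}_{(1)}$ that isolates the ordinary Hecke eigensystem of the weight-one stabilized Eisenstein series $E_1(1, \chi_S)$. The form $\tilde{\sF}$ has been rigged so that its constant coefficient vanishes at every cusp indexed by $\lambda \in \Cl^+(F)$: indeed, in case 1 the constant coefficient of $\sE(1, \chi)$ at each narrow class $\lambda$ equals $2^{-[F:\Q]} \cL(\chi\omega)$ by interpolation, the constant coefficient of the product $E_1(1, \chi_{R'}) \cdot \sG((1+T)u^{-1} - 1)$ equals $2^{-[F:\Q]} L(\chi_{R'}, 0)$ (since multiplication of modular forms multiplies constant coefficients at each cusp and $c_\lambda(0, \sG) = 1$), and the scaling $\cL(\chi\omega)/L(\chi_{R'}, 0)$ in (\ref{e:fkdef}) is exactly what is needed for these to cancel; an analogous calculation handles (\ref{e:ftilde}) in cases 2 and 3. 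Thus $\tilde{\sF}$ is already a $\Lambda$-adic semi-cusp form at the $\Cl^+(F)$-cusps, but may have non-vanishing constant coefficients at the remaining ``ramified'' cusps of the Hilbert modular variety arising from the level structure at primes dividing $\fn p$.

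Next I would exploit the semi-local structure of $\tilde{\T}_{(1)}$. By Hida's control theorem, $\tilde{\T}$ is finitely generated as a $\Lambda$-module, so $\tilde{\T}_{(1)}$ is finite over the DVR $\Lambda_{(1)}$; its mod-$\pi$ reduction is therefore a finite-dimensional Artinian $E$-algebra, and after lifting idempotents we obtain a decomposition $\tilde{\T}_{(1)} = \prod_{\fm} \tilde{\T}_{(1), \fm}$ indexed by the finitely many maximal ideals. Let $\fm_0$ denote the maximal ideal specified by the Hecke eigensystem of $E_1(1, \chi_S)$, namely $T_\fl \mapsto 1 + \chi(\fl)$ for $\fl \nmid \fn p$, $U_\fp \mapsto 1$ for $\fp \mid p$, together with the prescribed eigenvalues at $\fl \mid \fn$. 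Setting $t = e_{\fm_0} \in \tilde{\T}_{(1)}$ to be the associated orthogonal idempotent, we immediately obtain $\nu_1(t) E_1(1, \chi_S) = E_1(1, \chi_S)$ since $E_1(1, \chi_S)$ is an eigenform supported in the $\fm_0$-isotypic component.

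To finish, one must show that $\sF = t \cdot e \cdot \tilde{\sF}$ has vanishing constant coefficient at every remaining ramified cusp $\tau$. For each such $\tau$ the functional $f \mapsto c_\tau(0, f)$ defines a Hecke-equivariant map (up to a twist determined by $\tau$) from $e \cdot \cM^o(\fn, \chi)_{(1)}$ to a $\Lambda_{(1)}$-module whose support consists of Eisenstein-type maximal ideals attached to the cusp. The main obstacle is to verify that for every ramified cusp $\tau$ these Eisenstein eigensystems correspond to maximal ideals of $\tilde{\T}_{(1)}$ genuinely distinct from $\fm_0$; this is done by comparing $U_\fl$-eigenvalues for primes $\fl \mid \fn$ and ruling out coincidences coming from other weight-one ordinary Eisenstein series on $\Gamma_0(\fn p)$. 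Granting this distinctness, $t = e_{\fm_0}$ annihilates $c_\tau(0, e \tilde{\sF})$ for every such $\tau$, so $\sF$ has vanishing constant coefficients at all cusps and hence lies in $\cS^o(\fn, \chi)_{(1)}$. The classification of Eisenstein eigensystems at the ramified cusps is precisely the technical content of \cite[Corollary 2.10 and Proposition 3.4]{ddp}, and represents the genuine calculation underlying this theorem.
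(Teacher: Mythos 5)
The paper itself does not prove this theorem: it simply cites \cite[Corollary 2.10 and Proposition 3.4]{ddp}. Your sketch outlines a plausible proof strategy, and for case 1 ($R' \neq \emptyset$) it roughly captures the structure of the argument: project onto the $\fm_0$-adic component and use the distinctness of Eisenstein eigensystems at the ramified cusps. However, there is a substantive flaw in your identification of ``the main obstacle.''

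Your proposal asserts that for every ramified cusp, the Eisenstein eigensystems controlling the constant term there are \emph{genuinely distinct} from $\fm_0$, and that the idempotent $e_{\fm_0}$ therefore annihilates those constant terms. This is false when $R' = \emptyset$ (cases 2 and 3). In that setting $\chi(\fp) = 1$ for all $\fp \mid p$, and the family $\sE(\chi, 1)$ has $T_\fl$-, $U_\fp$- and $U_\fl$-eigenvalues all congruent to those of $\sE(1, \chi)$ modulo $\fm$; that is, $\sE(\chi, 1)$ lies in the \emph{same} local component $\fm_0$ of $\tilde{\T}_{(1)}$ as $\sE(1, \chi)$, yet it is semi-cuspidal (its constant terms at the $\Cl^+(F)$-cusps vanish because $\chi \neq 1$) while having non-vanishing constant terms at the ramified cusps. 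Thus $\sE(\chi, 1)$ generates a non-trivial free $\Lambda_{(1)}$-submodule of the boundary module localized at $\fm_0$, and no Hecke operator $t$ with $\nu_1(t)E_1(1,\chi_S) = E_1(1,\chi_S)$ can kill it. Consequently, in cases 2 and 3 one cannot obtain cuspidality by projecting alone; one must \emph{first} subtract off the $\sE(\chi, 1)$-contribution. This is precisely the role of the third term $\sE(\chi, 1) \cdot \cW$ in (\ref{e:ftilde}), which your sketch treats as ``an analogous calculation'' rather than as the decisive ingredient: without it $e\tilde{\sF}$ has a non-zero image in the $\fm_0$-localization of the boundary and $t e \tilde{\sF}$ is never cuspidal. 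The presence of the extra variable $y$ in the ring $W_2$ of Theorem~\ref{t:case2}, tracking the operator $Y$ with $Y\tilde{\sF} \equiv \pi\sE(\chi,1)\cW$, is a direct reflection of this collision of eigensystems. A correct outline should make this case distinction explicit and show that, after including the $\sE(\chi,1)\cW$ term, the image of $e\tilde{\sF}$ in the $\fm_0$-boundary vanishes; it is this vanishing, not distinctness of eigensystems, that is the genuine content of \cite[Prop.\ 3.4]{ddp} in cases 2 and 3.
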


\subsection{Hecke Action in Case 1 ($R' \neq \phi$)}

We now study the action of the Hecke operators on the form $\sF$.  The action of the Hecke operators above $p$ is more complicated than
the setting $r=1$ considered in \cite{ddp}, and our methods here draw from those introduced in \cite{v}.
 Write
\[ r_{\an} = r_{\an}(\chi) = \ord_{s=0} L_p(\chi \omega, s), \qquad \sL_{\an}^*(\chi) =  \frac{L_p^{(r_\an)}(\chi, 0)}{r_\an! L(\chi, 0) \prod_{\fp \in R'} (1 - \chi(\fp))}.  \]
(Of course, Conjecture~\ref{c:order} states that $r_{\an} = r$ and hence $\sL_{\an}^*(\chi) = \sL_{\an}(\chi)$, but we are not assuming this conjecture.)

Any Hida family is determined by its Fourier expansion; there is a canonical $\Lambda$-algebra embedding
\[ c\colon \cS^o(\fn, \chi)_{(1)} \longrightarrow \prod_{\fa \subset \cO_F} \Lambda_{(1)}, \qquad \sH \mapsto (c(\fa, \sH))_{\fa \subset \cO_F}. \]
We define $\cH$ to be the image of the Hecke orbit of $\sF$ under the reduction of $c$ modulo $\pi^{r_{\an}+1}$.  This is a finitely-generated
module over $\Lambda_{(1)}/\pi^{r_\an+1} = E[\pi]/\pi^{r_{\an}+1}$, and  we obtain a canonical $\Lambda$-algebra homomorphism
\begin{equation} \label{e:phidef}
 \varphi \colon \T \longrightarrow \End_{E[\pi]/\pi^{r_\an+1}} \cH. \end{equation}
By identifying the image of (\ref{e:phidef}), we can now prove Theorem~\ref{t:hom} from the introduction.
\begin{theorem}   \label{t:case1}
  Suppose $R'$ is nonempty.  There exists a $\Lambda$-algebra homomorphism
  \[ \varphi \colon \T \longrightarrow W_1 = E[\pi, \epsilon_1, \dotsc, \epsilon_r]/(\pi^{r_\an+1}, \epsilon_i^2, \epsilon_i \pi, \epsilon_1\epsilon_2 \cdots \epsilon_r + (-1)^{r_\an} \sL_{\an}^*(\chi) \pi^{r_\an}) \]
  such that
  \begin{alignat*}{2}
  T_\fl & \mapsto 1 + \chi \epsilon(\fl) \quad  &&\text{for } \fl \nmid \fn p \\
  U_\fl & \mapsto 1 \quad && \text{for } \fl \mid \fn \text{ or } \ell \in R', \text{ and } \\
  U_{\fp_i} & \mapsto 1 + \epsilon_i, && R = \{\fp_1, \dotsc, \fp_r\}.
  \end{alignat*}
    \end{theorem}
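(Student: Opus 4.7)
The plan is to construct $\varphi$ by studying the action of $\T$ on the Hecke orbit of $\sF$ reduced modulo $\pi^{r_\an+1}$. For each subset $S \subseteq \{1, \dotsc, r\}$ define
\[
\sF_S := \prod_{i \in S}(U_{\fp_i} - 1) \sF \in \cS^o(\fn, \chi)_{(1)},
\]
and let $\cH$ be the $E[\pi]/\pi^{r_\an+1}$-submodule of $\prod_\fa \Lambda_{(1)}/\pi^{r_\an+1}$ spanned by the Fourier expansions of the $\sF_S$. The theorem reduces to verifying, modulo $\pi^{r_\an+1}$, the four structural relations
\begin{enumerate}
\item[(i)] the operators $U_{\fp_i}$ pairwise commute on $\sF$ (immediate from commutativity of $\T$);
\item[(ii)] $(U_{\fp_i} - 1)^2 \sF_S \equiv 0$ for every $S$;
\item[(iii)] $\pi \sF_S \equiv 0$ for every nonempty $S$;
\item[(iv)] $\sF_{\{1, \dotsc, r\}} \equiv (-1)^{r_\an+1} \sL^*_{\an}(\chi) \pi^{r_\an} \sF$.
\end{enumerate}
Together with the eigenvalue statements below, these are exactly the defining relations of $W_1$ under $\pi \leftrightarrow \pi$ and $\epsilon_i \leftrightarrow U_{\fp_i} - 1$; consequently $\cH$ becomes a cyclic $W_1$-module generated by $\sF$, and the Hecke action of $\T$ on $\cH$ defines the desired map $\varphi$.

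For the Hecke operators outside $R$, namely $T_\fl$ with $\fl \nmid \fn p$ and $U_\fl$ with $\fl \mid \fn$ or $\fl \in R'$, I would verify by direct Fourier-coefficient computation on $\tilde\sF$ that $\sF$ is a genuine Hecke eigenform with the advertised eigenvalues $1 + \chi\epsilon(\fl)$ and $1$ respectively. Both the Eisenstein summand $\sE(1, \chi)$ and the correction term $E_1(1, \chi_{R'}) \cdot \sG((1+T)u^{-1}-1) \cdot \cL(\chi\omega)/L(\chi_{R'}, 0)$ are $\Lambda$-adic eigenforms for these operators with matching eigenvalues (the shift in $\sG$ is specifically designed so that the $T_\fl$-eigenvalue of the product is $1 + \chi\epsilon(\fl)$ rather than $1 + \chi(\fl)$). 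Since $e$ and $t$ commute with these operators, the statement transfers to $\sF$ and hence, by commutativity of $\T$, to every $\sF_S$.

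The technical core of the argument is (ii), (iii), (iv). Since $\fp_i$ lies in the modulus of $\chi\omega^{1-k}$ by convention, the Eisenstein piece $\sE(1,\chi)$ is annihilated by each $U_{\fp_i} - 1$, so all of the $U_{\fp_i} - 1$-action on $\sF$ comes from the correction term. Modulo $\pi^{r_\an+1}$, the scalar factor $\cL(\chi\omega)/L(\chi_{R'}, 0)$ reduces to $(-1)^{r_\an} \sL^*_{\an}(\chi)\pi^{r_\an}$, using that $\cL(\chi\omega)$ has $\pi$-valuation exactly $r_\an$ with leading coefficient $(-1)^{r_\an} L_p^{(r_\an)}(\chi\omega, 0)/r_\an!$ (the sign from $s = 1 - k$), and $\sG((1+T)u^{-1}-1)$ specializes to $1$ in weight one. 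Relations (ii) and (iii) then follow because $\chi(\fp_i) = 1$ and $\epsilon(\fp_i) \equiv 1 + (\log_p\langle \N\fp_i\rangle)\pi \pmod{\pi^2}$, so applying $U_{\fp_i} - 1$ to the correction term raises the $\pi$-adic valuation by at least one, and the existing $\pi^{r_\an}$ factor kills the result modulo $\pi^{r_\an+1}$ after one further application.

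The main obstacle is relation (iv), which is where $\sL^*_{\an}(\chi)$ enters. The strategy is to compute $\prod_{i=1}^r(U_{\fp_i} - 1)\bigl(E_1(1, \chi_{R'}) \cdot \sG((1+T)u^{-1}-1)\bigr)$ explicitly at the level of Fourier coefficients; since the $U_{\fp_i} - 1$ do not act multiplicatively on products of modular forms, one must track the decomposition of the Hecke action on the product, following the approach of \cite{v}. Using the classical identity $(U_{\fp_i} - 1)E_1(1, \chi_{R'}) = E_1(1, \chi_{R' \cup \{\fp_i\}})$ iteratively yields $\prod_i(U_{\fp_i} - 1) E_1(1, \chi_{R'}) = E_1(1, \chi_S)$ (the full stabilization at $p$), which under $t \cdot e$ equals $\nu_1(\sF)$ by the defining property of $t$. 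Combining with the scalar factor $(-1)^{r_\an}\sL^*_{\an}(\chi)\pi^{r_\an}$ and the overall minus sign from the definition of $\tilde\sF$ produces the asserted $(-1)^{r_\an+1}\sL^*_{\an}(\chi)\pi^{r_\an}\sF$. The delicate part of this step is controlling the correction terms arising from the non-multiplicativity of $U_{\fp_i} - 1$ on the product $E_1(1, \chi_{R'}) \cdot \sG((1+T)u^{-1}-1)$, showing that each such correction has $\pi$-valuation $\ge r_\an+1$ and hence is invisible modulo $\pi^{r_\an+1}$; this is the combinatorial heart of the theorem and parallels, but substantially generalizes, the rank-$1$ computation of \cite[\S 3]{ddp}.
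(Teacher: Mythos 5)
Your overall strategy is the correct one and essentially matches the paper's: reduce the Hecke orbit of $\sF$ modulo $\pi^{r_\an+1}$, verify the relations $\epsilon_i^2 = \epsilon_i\pi = 0$ and $\epsilon_1\cdots\epsilon_r = (-1)^{r_\an+1}\sL_\an^*(\chi)\pi^{r_\an}$, and read off the eigenvalues of $T_\fl$ and $U_\fl$. But there are a few places where your proposal either overstates what is true or leaves a genuine gap.

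First, the claim that ``$\sF$ is a genuine Hecke eigenform'' for $T_\fl$ and $U_\fl$ with $\fl \mid \fn$ or $\fl \in R'$ is incorrect. The auxiliary family $\sG$ is not an eigenform --- it is only required to satisfy $\nu_0(\sG)=1$ and $c_\lambda(0,\sG)=1$ --- so the product $E_1(1,\chi_{R'})\cdot\sG(\,\cdot\,)\cdot\cL(\chi\omega)/L(\chi_{R'},0)$ is not an eigenform, and therefore neither is $\tilde\sF$ or $\sF$. The correct assertion, which is what the paper proves, is that $\sF$ is an eigenform \emph{only modulo $\pi^{r_\an+1}$}.

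Second, the ``combinatorial heart'' you describe --- tracking the non-multiplicativity of the $U_{\fp_i}-1$ on the product $E_1(1,\chi_{R'})\cdot\sG(\,\cdot\,)$ --- is not how the argument proceeds, and in fact that issue never arises. The observation the paper exploits (its congruence \eqref{e:fpcong}) is that, because $\cL(\chi\omega)$ has $\pi$-valuation exactly $r_\an$ and $\sG(\,\cdot\,)\equiv 1\pmod{\pi}$, the entire correction term is congruent \emph{coefficient by coefficient} to $(-1)^{r_\an}\sL_\an^*(\chi)\pi^{r_\an}\cdot E_1(1,\chi_{R'})$ modulo $\pi^{r_\an+1}$. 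That is, modulo $\pi^{r_\an+1}$ the correction term is a \emph{scalar} in $\Lambda_{(1)}/\pi^{r_\an+1}$ times a single classical form, so the action of any Hecke operator $\tau$ on it is simply $(-1)^{r_\an}\sL_\an^*(\chi)\pi^{r_\an}\cdot\nu_1(\tau)(E_1(1,\chi_{R'}))$, i.e.\ the weight-one classical action; there is no product of modular forms left to worry about. You have all the ingredients for this observation in your second paragraph, but your framing of the computation as a generalization of a delicate rank-one argument misses that the reduction to the classical setting is immediate.

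Third, and most substantively, your proposal does not establish the injectivity needed to conclude. Verifying relations (i)--(iv) and the eigenvalue statements only shows that there is a surjective $\Lambda_{(1)}$-algebra homomorphism $W_1\twoheadrightarrow\varphi(\T)$ sending $\epsilon_i\mapsto\overline\epsilon_i$; it does not show that $\cH$ is a faithful $W_1$-module. If that surjection had a nonzero kernel, the image of $\T$ in $\End(\cH)$ would be a proper quotient of $W_1$ and one would not obtain a homomorphism $\T\to W_1$ as claimed. The paper closes this gap by a dimension count: it shows that $1,\pi,\dots,\pi^{r_\an-1}$ together with the products $\prod_{j\in J}\overline\epsilon_j$ for $\emptyset\neq J\subset R$ act $E$-linearly independently on $\sF$, by examining orders of vanishing at $k=1$ for the first family and by observing that the forms $\prod_{j\in J}\overline\epsilon_j\sF\equiv(\text{const})\cdot\pi^{r_\an}E_1(1,\chi_{R'\cup J})$ are linearly independent since the Eisenstein series $E_1(1,\chi_{R'\cup J})$ for distinct $J$ are. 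You should add this step to complete the argument.
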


\begin{proof}

By definition,    $\pi^{r_\an}$ fully divides $\cL(\chi\omega)$ in $\Lambda_{(1)}$.  Since $\nu_0(\sG) = 1$, it follows that modulo $\pi^{r_{\an}+1}$ we can write the second term appearing in the definition of $\tilde{\sF}$ more simply, namely:
\begin{align}
\sF' &=  E_1(1, \chi_{R'})\sG((1+T)u^{-1} - 1)) \cdot \frac{\cL(\chi \omega)}{L(\chi_{R'}, 0)} \nonumber \\
&\equiv (-1)^{r_\an} E_1(1, \chi_{R'}) \sL_{\an}^*(\chi)\pi^{r_\an} \pmod{\pi^{r_\an+1}}. \label{e:fpcong}
\end{align}
To be clear, this congruence means that the two sides have Fourier coefficients that are congruent modulo $\pi^{r_\an+1}$.
In particular, modulo $\pi^{r_\an+1}$ the Hecke action on $\sF'$
depends only on the action on the form $E_1(1, \chi_{R'})$.  More precisely, if $\tau \in \tilde{\T}$
 then we have
\begin{equation} \label{e:tfp}
 \tau \sF' \equiv   (-1)^{r_\an} \nu_1(\tau)(E_1(1, \chi_{R'}))\cdot \sL_{\an}^*(\chi)\pi^{r_\an} \pmod{\pi^{r_\an+1}}.
 \end{equation}
Let us therefore study the action of the Hecke operators on $E_1(1, \chi_{R'})$.
We have
\begin{alignat*}{2}
 T_\fl  E_1(1, \chi_{R'}) &= (1 + \chi(\fl))  E_1(1, \chi_{R'}), \qquad && \fl \nmid \fn p.  \\
U_\fl  E_1(1, \chi_{R'}) &= E_1(1, \chi_{R'}),  && \fl \mid \fn \text{ or } \fl \in R'.
\end{alignat*}
The action of the operators $U_\fp$ for $\fp \in R$ is more subtle and leads to interesting phenomenon.  A direct calculation shows that for $\fp \in R$, we have
\[ U_\fp   E_1(1, \chi_{R'}) =  E_1(1, \chi_{R'})  +  E_1(1, \chi_{R' \cup \{\fp\}}). \]
More generally, for $R' \subset J \subset S_p$ and $\fp \in S_p$, we have
\begin{equation} \label{e:upaction}
 (U_\fp - 1) E_1(1, \chi_J) = \begin{cases}
E_1(1,\chi_{J \cup \{\fp\}}) & \text{ if } \fp \not \in J, \\
0 & \text{ if } \fp \in J.
\end{cases}
 \end{equation}

Note that for $\fl \nmid \fn p$, we have $T_\fl(\sE(1, \chi)) = (1 + \chi \epsilon(\fl)) \sE(1, \chi)$.
Since \[  1 + \chi \epsilon(\fl) \equiv 1+ \chi(\fl) \pmod{\pi}, \]
it follows from (\ref{e:tfp}) and the definition of $\tilde{\sF}$ that modulo $\pi^{r_\an + 1}$, the Hecke operator $T_\fl$ acts as multiplication
by the scalar $1 + \chi \epsilon(\fl)$ on $\tilde{\sF}$.  By the commutativity of the Hecke algebra, the same is clearly true for $\sF$ and its entire Hecke orbit $\cH$.
The same argument shows that $U_\fl$ for $\fl \mid \fn$ or $\fl \in R'$ acts as the identity on  $\cH$.
Therefore the homomorphism (\ref{e:phidef}) satisfies
\begin{alignat}{2}
\varphi(T_\fl) &=  1 + \chi \epsilon(\fl) \quad && \text{ for } \fl \nmid \fn p, \label{e:tlambda} \\
 \varphi(U_\fl) &= 1  &&   \text{ for } \fl \mid \fn \text{ or } \fl \in R'.
 \end{alignat}

 Recall that $R = \{\fp_1, \dotsc, \fp_r\}$.  For $\fp_i \in R$, the operator $U_{\fp_i} - 1$ annihilates $\sE(1, \chi)$.  It follows from this along with (\ref{e:fpcong}) that $\pi$ annihilates
 the image of $(U_{\fp_i} - 1)\sF$ in $\cH$.  Similarly using
 (\ref{e:tfp}) and (\ref{e:upaction}), it follows that the image of $(U_{\fp_i} - 1)^2\sF$ is $0$ in $\cH$.
If we let $\overline{\epsilon}_i$ denote the image of $U_{\fp_i} - 1$ under the homomorphism $\varphi$ given in (\ref{e:phidef}), it is therefore clear that
\begin{equation}  \label{e:epsilonpi}
\overline{\epsilon}_i^2 = 0 \qquad \text{ and  } \qquad \overline{\epsilon}_i \cdot \pi = 0 \text{ for all } i.
\end{equation}
Finally, we consider the action of $\prod_{i=1}^{r} (U_{\fp_i} - 1)$.  We have
\begin{alignat*}{2}
 \prod_{i=1}^{r} (U_{\fp_i} - 1) \sF & \equiv t \cdot e  ((-1)^{r_\an + 1} E_1(1, \chi_{S}) \sL_{\an}^*(\chi)\pi^{r_\an}) && \pmod{\pi^{r_\an + 1}} \\
& \equiv t \cdot e ((-1)^{r_\an + 1}  \sL_{\an}^*(\chi)\pi^{r_\an} \sE(1, \chi)) && \pmod{\pi^{r_\an + 1}} \\
& \equiv (-1)^{r_\an + 1}  \sL_{\an}^*(\chi)\pi^{r_\an} \sF &&  \pmod{\pi^{r_\an + 1}}.
 \end{alignat*}
Therefore we have \begin{equation} \label{e:epslan}
 \overline{\epsilon}_1 \cdots \overline{\epsilon}_r + (-1)^{r_\an}  \sL_{\an}^*(\chi)\pi^{r_\an}  = 0 \qquad \text{in } \End_{E[\pi]/\pi^{r_\an + 1}} \cH.
 \end{equation}

Combining (\ref{e:tlambda})--(\ref{e:epslan}), we have therefore proved that there is a surjective $\Lambda_{(1)}$-algebra homomorphism
\[
W_1 \longrightarrow \varphi(\T) \otimes_{\cO_E} E
\]
such that $\epsilon_i \mapsto \overline{\epsilon_i}$.  To conclude the proof, we must show that this homomorphism is injective. This can be achieved by counting dimensions.  The algebra $W_1$ has dimension $2^{r_{\an}} + {r_\an} - 1$ over $E$, and is generated as an $E$-vector space by $1, \pi, \pi^2, \dotsc, \pi^{r_\an-1}$ and the products $\prod_{j \in J} \epsilon_j$ for all subsets $J \subset R$, $J \neq \phi$.  We must therefore show that the elements $1, \pi, \pi^2, \dotsc, \pi^{r_\an - 1}$ and the products $\prod_{j \in J} \overline{\epsilon}_j$
are $E$-linearly independent in $\End_{E[\pi]/\pi^{r_\an+1}} \cH$, and for this it suffices to show that their images on $\sF$ are $E$-linearly independent.
It is clear that the coefficients of $\sF, \pi \sF, \dotsc, \pi^{r_\an - 1} \sF$ in any putative linear combination must be zero, since these forms all vanish to distinct orders less than $r_\an$ at $k=1$.  We have already calculated that up to a nonzero constant multiple, the forms $\prod_{j \in J} \overline{\epsilon}_j \sF$ for $J \neq \phi$ are congruent to
$E_1(1, \chi_{R' \cup J}) \pi^{r_\an}$ modulo $\pi^{r_{\an}+1}$.  These forms are easily seen to be linearly independent over $E$, and the result follows.
\end{proof}

\begin{remark}
For our applications, we only require the subalgebra $\T' \subset \T$ generated by the operators $T_\fl$ for $\fl \nmid \fn p$, $U_\fl$ for $\fl \mid \fn$ or $\fl \in R'$, and
\[ U_J = T^{r - \#J}\prod_{\fp \in J} (U_{\fp} - 1) \]
for nonempty subsets $J \subset R$.  Restricting  the homomorphism $\varphi$ to $\T'$ and reducing modulo $\pi^{r+1}$ (this reduction is only relevant if $r_\an > r$) yields a $\Lambda$-algebra homomorphism
\[ \varphi' \colon \T' \longrightarrow E[\pi]/\pi^{r+1} \]
satisfying
\begin{alignat*}{2}
T_\fl & \mapsto 1 + \chi \epsilon(\fl)  && \text{ for } \fl \nmid \fn p, \\
 U_\fl &\mapsto 1 && \text{ for } \fl \mid \fn \text{ or } \fl \in R', \\
  U_J &\mapsto 0   && \text{ for } \phi \neq J \subsetneq R, \\
  U_R &\mapsto (-1)^{r+1} \sL_{\an}(\chi) \pi^r.
  \end{alignat*}
  This holds even if $r_{\an} > r$, in which case $\sL_{\an}(\chi) = 0$.
The homomorphism $\varphi'$ can be constructed directly and more simply as the mod $\pi^{r+1}$-eigenvalues of the form $\sF$, i.e.\ for all $\tau \in \T'$ we have
\[ \tau \sF \equiv \varphi'(\tau) \sF \pmod{\pi^{r+1}}. \]
Even though the homomorphism $\varphi'$ is sufficient for our applications, we have included the construction of the homomorphism $\varphi$ on the full Hecke algebra $\T$ for completeness.
\end{remark}

\begin{remark} If $r = 1$, there is a natural $\Lambda_{(1)}$-algebra homomorphism $W_1 \longrightarrow E[\pi]/\pi^2$ sending $\epsilon_1 \mapsto  \sL_{\an}(\chi) \pi$.  (Note that this holds even if $r_{\an} > r = 1$, in which case $\sL_{\an}(\chi) = 0$.)  The composition of $\varphi$ with this homomorphism is precisely the homomorphism constructed in case 1 in \cite{ddp}.
\end{remark}

\subsection{Hecke Action in Case 2: $R' = \phi$, $\nu_1(\cW) \neq 0$}

In this section, we handle the more complicated setting where $R'  = \phi$.  Recall that  we are assuming that $\cW \in \Lambda_{(1)}$ so that
the family $\sF$ is regular in weight 1.
Define the $\Lambda_{(1)}$-algebra \[ W_2 = E[\pi, \epsilon_1, \dotsc, \epsilon_r, y]/I_{W_2} \]
  where
   \begin{align*}
    I_{W_2} = & (\pi^{r_\an+1}, y^{{r_\an} + 1}, y(\pi - y), \pi^{r_{\an}}\cW - y^{r_{\an}}(\cW + 1),  \\
    & \ \ \ \  \epsilon_i^2, \epsilon_i \pi, \epsilon_i y , \epsilon_1\epsilon_2 \cdots \epsilon_r + (-1)^{r_\an} \sL_{\an}^*(\chi)(\pi^{r_\an} - y^{r_{\an}})).
    \end{align*}
\begin{theorem}   \label{t:case2}
  Suppose $R'$ is empty.
  If $\nu_1(\cW) \neq 0$, then
  there exists a $\Lambda$-algebra homomorphism
  \[ \varphi \colon \T \longrightarrow W_2 \]
  such that
  \begin{alignat*}{2}
  T_\fl & \mapsto 1 + \chi \epsilon(\fl) + (\chi(\fl) - 1)\frac{1 - \epsilon(\fl)}{\pi} y\quad  &&\text{ for } \fl \nmid \fn p \\
  U_\fl & \mapsto 1  + \frac{\epsilon(\fl) - 1}{\pi} y \quad && \text{ for } \fl \mid \fn, \text{ and } \\
  U_{\fp_i} & \mapsto 1 + \epsilon_i.
  \end{alignat*}
    \end{theorem}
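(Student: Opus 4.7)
The plan is to parallel the proof of Theorem~\ref{t:case1}: we take $\varphi\colon \T \to \End_{E[\pi]/\pi^{r_\an+1}}(\cH)$ to be the representation on $\cH$, the image of the Hecke orbit of $\sF$ modulo $\pi^{r_\an+1}$, and identify $\varphi(\T) \otimes_{\cO_E} E$ with $W_2$. The essential new ingredient is the third Eisenstein piece $\sE(\chi, 1) \cdot \cw$ in (\ref{e:ftilde}), whose Hecke eigenvalues differ from those of $\sE(1, \chi)$: on $\sE(\chi, 1)$, $T_\fl$ acts by $\chi(\fl) + \epsilon(\fl)$ for $\fl \nmid \fn p$, and $U_\fl$ acts by $\epsilon(\fl)$ for $\fl \mid \fn$. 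The variable $y \in W_2$ is identified with $\pi(U_\fl - 1)/(\epsilon(\fl) - 1) \in \varphi(\T)$ for any $\fl \mid \fn$ with $\log_p\langle \N\fl\rangle \neq 0$: since $U_\fl$ acts trivially on $\sE(1,\chi)$ and (after reducing modulo $\pi^{r_\an+1}$, using $\nu_0(\sG)=1$ and $\cL(\chi\omega) \sim \pi^{r_\an}$) on the middle term, which becomes a constant times $\pi^{r_\an}E_1(1,\chi)$, we obtain $(U_\fl - 1)\sF \equiv (\epsilon(\fl) - 1)\cdot t\cdot e \sE(\chi, 1)\cdot \cw \pmod{\pi^{r_\an+1}}$, giving $y \sF \equiv \pi \cdot t \cdot e \sE(\chi, 1)\cdot \cw$.

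Verification of the $W_2$-relations then proceeds directly. The formula for $T_\fl$ follows from $T_\fl\sF - (1+\chi\epsilon(\fl))\sF \equiv (\chi(\fl)-1)(1-\epsilon(\fl)) \cdot t\cdot e \sE(\chi,1)\cw$ modulo $\pi^{r_\an+1}$, noting that the middle term's contribution carries an extra factor of $\pi$ and hence vanishes. The relations $\epsilon_i^2 = \epsilon_i\pi = 0$ follow as in Case~1; $\epsilon_i y = 0$ holds because $U_{\fp_i}$ acts by $1$ on $\sE(\chi,1)$ (using $\chi(\fp_i) = 1$); and $y^{r_\an+1} = 0$, $y\pi = y^2$ follow immediately from iterating $y^k\sF \equiv \pi^k \cdot t\cdot e \sE(\chi,1)\cw$. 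The product relation $\prod_i\epsilon_i + (-1)^{r_\an}\sL_\an^*(\chi)(\pi^{r_\an} - y^{r_\an}) = 0$ is proved by applying $\prod_i(U_{\fp_i} - 1)$ to $\sF$: only the middle term contributes, yielding $(-1)^{r_\an+1}\sL_\an^*(\chi)\pi^{r_\an} \cdot t\cdot e E_1(1,\chi_S)$ as in Case~1, which equals $(-1)^{r_\an+1}\sL_\an^*(\chi)(\pi^{r_\an}-y^{r_\an})\sF$ via the identity $(\pi^{r_\an} - y^{r_\an})\sF \equiv \pi^{r_\an} \cdot t\cdot e\sE(1,\chi) \equiv \pi^{r_\an} E_1(1,\chi_S)$. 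Finally, the crucial relation $\pi^{r_\an}\cW = y^{r_\an}(\cW+1)$ is verified on $\sF$: both sides reduce to $\nu_1(\cw)(1+\nu_1(\cw))\pi^{r_\an}E_1(1,\chi_S)$, using $\nu_1(\sF) = (1+\nu_1(\cw))E_1(1,\chi_S)$ for the left side and $y^{r_\an}\sF \equiv \nu_1(\cw)\pi^{r_\an}E_1(1,\chi_S)$ for the right.

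The main obstacle is establishing the coincidence $t\cdot e\sE(\chi,1)|_{k=1} = t\cdot e\sE(1,\chi)|_{k=1} = E_1(1, \chi_S)$, which reflects the merging of the two $U_\fp$-stabilizations of the relevant Eisenstein series at $\fp \in R$ in weight~$1$, where the eigenvalues $\chi(\fp) = 1$ and $\omega^{1-k}(\fp)\N\fp^{k-1}$ coincide at $k=1$. The hypothesis $\nu_1(\cw) \neq 0$ is essential: it ensures $\cw \in \Lambda_{(1)}^*$, so the third Eisenstein piece contributes at full strength to $\nu_1(\sF)$ and the relation $\pi^{r_\an}\cW = y^{r_\an}(\cW+1)$ is nondegenerate (implicitly also requiring $\nu_1(\cw) + 1 \neq 0$, a sub-case we may handle separately). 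Once all relations are established, the surjection $W_2 \twoheadrightarrow \varphi(\T)\otimes_{\cO_E} E$ is an isomorphism by dimension count: the $E$-basis $\{\pi^j : 0 \le j \le r_\an\} \cup \{y^j : 1 \le j \le r_\an - 1\} \cup \{\prod_{j\in J}\epsilon_j : \emptyset \neq J \subsetneq R\}$ of $W_2$ (total dimension $2r_\an + 2^{r_\an} - 2$) maps to $E$-linearly independent operators on $\sF$, as verified by inspecting the distinct Eisenstein components of the Fourier expansion at different $\pi$-orders.
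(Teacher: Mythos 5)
Your proposal follows the paper's proof almost exactly in its overall structure: define $\cH$ as the image of the Hecke span of $\sF$ modulo $\pi^{r_\an+1}$, produce a Hecke element whose action picks out the third Eisenstein piece $\sE(\chi,1)\cW$, verify the defining relations of $W_2$ on $\sF$, and conclude by a dimension count. The one genuine, but minor, variation is your choice of the element representing $y$: you use $\pi(U_\fl - 1)/(\epsilon(\fl)-1)$ for a prime $\fl\mid\fn$, whereas the paper takes $Y = \frac{T_\fq - 1 - \chi\epsilon(\fq)}{(\chi(\fq)-1)(1-\epsilon(\fq))/\pi}$ for a prime $\fq\nmid\fn p$ with $\chi(\fq)\neq 1$. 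Both are valid: each numerator annihilates $\sE(1,\chi)$ (and, up to $O(\pi)$, the middle term), so each picks out $\sE(\chi,1)\cW$ in the same way, and the resulting element acts identically on $\cH$. Your choice is slightly more economical since one never needs to verify $\log_p\langle\N\fl\rangle\neq 0$ (this is automatic for $\fl\nmid p$), while the paper must produce a $\fq$ with $\chi(\fq)\neq 1$.

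The substantive gap is the sub-case $\nu_1(\cW)=-1$, which you note parenthetically but do not handle. This is not a trivial omission. When $\nu_1(\cW)=-1$, the relation $\pi^{r_\an}\cW = y^{r_\an}(\cW+1)$ in $W_2$ collapses to $\pi^{r_\an}=0$ (since $y^{r_\an}\pi=0$ kills $\cW+1$ modulo its leading coefficient), so the basis you propose for $W_2$, which retains $\pi^{r_\an}$ and discards $y^{r_\an}$, is no longer a basis at all; one must switch to a basis retaining $y^{r_\an}$ instead. Moreover, $\nu_1(\sF)=(1+\nu_1(\cW))E_1(1,\chi_S)=0$ in this case, so the ``distinct $\pi$-orders'' argument you gesture at for linear independence breaks down: $\sF$ and $Y\sF$ both have order exactly one at $k=1$, and separating them requires the paper's auxiliary trick of applying $Y$ to a putative linear dependence and comparing the resulting leading terms. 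Since the surjection $W_2\twoheadrightarrow\varphi(\T)\otimes E$ must be shown to be an isomorphism in all cases, this argument cannot simply be deferred; the paper devotes a separate paragraph to it and you would need to do the same.
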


\begin{proof}  The proof follows that of Theorem~\ref{t:case1}.  We again let $\cH$ denote the image of the Hecke span of $\sF$ in the space of Fourier coefficients
modulo $\pi^{r_\an + 1}$, and consider the canonical $\Lambda$-algebra homomorphism
\begin{equation} \label{e:phidef2}
\varphi \colon \T \longrightarrow \End_{E[\pi]/\pi^{r_\an + 1}} \cH.
\end{equation}

Fix a prime $\fq \nmid \fn p$ such that $\chi(\fq) \neq 1$.  Define
\[ Y = \frac{T_\fq - 1- \chi \epsilon(\fq)}{(\chi(\fq) - 1)(1 - \epsilon(\fq))/\pi} \in \tilde{\T}_{(1)}. \]
An explicit computation shows that
\[ Y \tilde{\sF} \equiv \pi \sE(\chi, 1) \cW \pmod{\pi^{r_{\an}+ 1}}. \]
It therefore follows that
\begin{alignat}{3}
T_\fl \sF & \equiv \left(1 + \chi \epsilon(\fl) + (\chi(\fl) - 1)\frac{1 - \epsilon(\fl)}{\pi} Y\right) \sF  && \pmod{\pi^{r_\an}+1}, \quad && \fl \nmid \fn p \label{e:tly} \\
U_\fl \sF & \equiv \left(1  + \frac{\epsilon(\fl) - 1}{\pi} Y\right) \sF && \pmod{\pi^{r_\an}+1},  && \fl \mid \fn.
\end{alignat}
One also computes $Y \sE(\chi, 1) = \pi \sE(\chi, 1)$ and hence:
\begin{alignat}{2}
(\pi Y - Y^2) {\sF}  & \equiv 0 && \pmod{\pi^{r_{\an} + 1}}, \\
Y^{r_{\an} + 1} {\sF} & \equiv 0 && \pmod{\pi^{r_{\an} + 1}}, \\
(\pi^{r_\an} \cW- Y^{r_\an}(\cW +1)) {\sF} & \equiv 0 && \pmod{\pi^{r_{\an} + 1}}. \label{e:relation3}
\end{alignat}
In computing (\ref{e:relation3}), one uses $\sE(1, \chi) \equiv \sE(\chi, 1) \equiv E_1(1, \chi_S) \pmod{\pi}$.  Now we consider the action of the Hecke operators above $p$ on $\sF$ modulo $\pi^{r_\an+1}$.
As in Theorem~\ref{t:case1}, we have \begin{equation}
 (U_\fp - 1)^2 \sF \equiv \pi (U_\fp - 1) \sF \equiv 0 \pmod{\pi^{r_{\an} + 1}}, \end{equation}
 and clearly also \begin{equation} (U_\fp - 1) Y \sF \equiv 0 \pmod{\pi^{r_{\an} + 1}}.
 \end{equation}
 We furthermore compute:
\begin{alignat*}{2}
 \prod_{i=1}^{r} (U_{\fp} - 1) \tilde{\sF} & \equiv (-1)^{r_{\an} + 1} E_1(1, \chi_S) \sL_{\an}^*(\chi) \pi^{r_{\an}} && \pmod{\pi^{r_\an + 1}} \\
 & \equiv  (-1)^{r_{\an} + 1}\sL_{\an}^*(\chi)(\pi^{r_\an} - Y^{r_\an}) \tilde{\sF} && \pmod{\pi^{r_\an + 1}},
 \end{alignat*}
hence
\begin{equation} \label{e:upy}
 \prod_{i=1}^{r} (U_{\fp} - 1) \sF \equiv  (-1)^{r_{\an} + 1}\sL_{\an}^*(\chi)(\pi^{r_\an} - Y^{r_\an}) \sF  \pmod{\pi^{r_\an + 1}}.
 \end{equation}

  Combining (\ref{e:tly})--(\ref{e:upy}), we see that there is a surjective $\Lambda_{(1)}$-algebra homomorphism \begin{equation}  \label{e:whom}
   W_2 \longrightarrow \varphi(\T) \otimes_{\cO_E} E \end{equation} such that
  $y$ maps to the image of $Y$ in $\End_{E[\pi]/\pi^{r_\an + 1}} \cH$ and $\epsilon_i$ maps to the image of $ U_{\fp_i}- 1$.  For future reference, we note that we have not yet used the condition $\nu_1(\cW) \neq 0$ in this proof.

  To conclude the proof,
  we must  demonstrate that the homomorphism (\ref{e:whom}) is an injection, which we again accomplish by counting 
  dimensions.  The algebra $W_2$ has dimension $2^{r_{\an} } + 2 r_{\an} - 2$ as an $E$-vector space and is generated by
   the images of \[ 1, \pi, \pi^2, \dots, \pi^{r_\an - 1}, y, y^2, \cdots, y^{r_\an}, \] and the products
  $\epsilon_J = \prod_{j \in J} \epsilon_i$ for all subsets $J \subset R, J \neq \phi, R$.

  First suppose $\nu_1(\cW) \neq -1$  (in addition to the assumption $\nu_1(\cW) \neq 0$ of the theorem) and suppose we have an $E$-linear combination of the
  forms \[ \{ \pi^i \sF\}_{i=0}^{r_\an - 1} \cup \{ Y^i \sF\}_{i=1}^{r_\an} \cup \left\{ \prod_{j \in J} (U_{\fp_j} - 1) \sF \right\}_{J \neq \phi, R} \subset \cH \]
  that vanishes. We must show that each of the coefficients in this linear combination is zero.
Now $\sF$ does not vanish at $k=1$, i.e. $\nu_1(\sF) = (1 + \nu_1(\cW))E_1(1, \chi_S) \neq 0$, and it is the only form in our list with this property; therefore its coefficient in our linear combination must be zero.  Next we consider the two order 1 terms in our list, namely $\pi \cF$ and $Y \cF$.  Suppose the coefficients of these two terms in our linear combination are $\alpha$ and $\beta$.  Then by considering leading terms, we must have $\alpha(1 + \nu_1(\cW)) + \beta \nu_1(\cW) = 0$.  However by applying $Y$ and then considering leading terms, we also find $\alpha + \beta = 0$.  These two equations imply that $\alpha = \beta = 0$.  Continuing in this fashion, we see that all the coefficients of the terms in our linear combination with order less than $r_{\an}$ must vanish.  It remains to prove that the image of the  forms $Y^{r_\an} \sF$ and
$ \left\{ \prod_{j \in J} (U_{\fp_j} - 1) \sF \right\}_{J \neq \phi, R} $ in $\cH$ are linearly independent over $E$.  However, modulo $\pi^{r_\an+1}$, these forms are congruent up to non-zero scalars to the forms $\pi^{r_\an} E_1(1, \chi_J)$ for $J \subset R$, $J \neq \phi$.  As noted earlier, these forms are linearly independent.

If $\nu_1(\cW) = -1$, a similar  argument goes through.  The minimal order forms in our list are $\sF$ and $Y \sF$; these each have order 1 and their leading terms (i.e.\ their images in $\cH$ modulo $\pi^2$) are linearly independent.  This implies that their coefficients in our linear combination are zero.  The next minimal order forms are $\pi \sF$ and $Y^2 \sF$, which each have order $2$ and have leading terms that are linearly independent.  Continuing in this way, we are reduced to proving that the order $r_\an$ forms $\pi^{r_\an - 1} \sF, Y^{r_\an} \sF,$ and $ \left\{ \prod_{j \in J} (U_{\fp_j} - 1) \sF \right\}_{J \neq \phi, R} $ are linearly independent modulo $\pi^{r_\an+1}$.  The linear independence of all but the first of these forms follows exactly as in the previous case.  We must therefore prove that  $\pi^{r_\an - 1} \sF$ cannot be written as a linear combination of $Y^{r_\an} \sF$ and $ \left\{ \prod_{j \in J} (U_{\fp_j} - 1) \sF \right\}_{J \neq \phi, R} $ modulo $\pi^{r_\an + 1}$.  However, applying $Y$ to such a putative linear combination, we would find that
 $\pi^{r_\an - 1} Y \sF \equiv 0 \pmod{\pi^{r_\an + 1}}$ since $Y$ annihilates all of the forms $Y^{r_\an} \sF$ and $ \left\{ \prod_{j \in J} (U_{\fp_j} - 1) \sF \right\}_{J \neq \phi, R} $ modulo $\pi^{r_\an + 1}$.  But \[ \pi^{r_\an - 1} Y \sF \equiv \pi^{r_\an} E_1(1, \chi_S) \nu_1(\cW) \not\equiv 0 \pmod{\pi^{r_\an+1}}. \]
This concludes the proof.
\end{proof}

\begin{remark} Note that when $r = 1$ and $w = \nu_1(W) \neq 0, -1$, there is a natural $\Lambda_{(1)}$-algebra homomorphism $W_2 \longrightarrow E[\pi]/\pi^2$ given by
\[ y \mapsto \pi \cdot w / (w+1), \qquad  \epsilon \mapsto \sL_{\an}(\chi) \pi /(w+1). \]
We therefore obtain a $\Lambda$-algebra homomorphism $\T \rightarrow E[\pi]/\pi^2$ such that:
\begin{alignat*}{2}
T_\fl &\mapsto   1 + \chi(\fl) + \frac{\chi(\fl) + w}{1 + w} (\log \langle \N\fl \rangle) T,  \qquad &&
\fl \nmid \fn p  \\
U_\fl &\mapsto 1, && \fl \mid \fn  \\
U_{\fp} &\mapsto 1 + \frac{\sL_{\an}}{1 + w}T, && R = S_p = \{\fp\}.
\end{alignat*}
This is the exactly homomorphism constructed in case 2 in \cite{ddp}.
\end{remark}

\subsection{Hecke Action in Case 3: $R' = \phi, \nu_1(\cw) = 0$ }

Suppose that $\cW$ has a zero at $k=1$, i.e. $r_{\an}(\chi) > r_{\an}(\chi^{-1})$.
For notational simplicity we write $s = r_{\an}(\chi)$ and $t = r_{\an}(\chi^{-1}).$
Define the $\Lambda_{(1)}$-algebra \[ W_3 = E[\pi, \epsilon_1, \dotsc, \epsilon_r, y]/I_{W_3} \]
  where
   \begin{align*}
    I_{W_3} = & (\pi^{s+1}, y^{t + 1}, y(\pi - y), \pi^{t}\cW - y^{t},  \\
    & \ \ \ \  \epsilon_i^2, \epsilon_i \pi, \epsilon_i y , \epsilon_1\epsilon_2 \cdots \epsilon_r + (-1)^{s} \sL_{\an}^*(\chi)\pi^{s}).
    \end{align*}
\begin{theorem}   \label{t:case3}
  Suppose $R'$ is empty and that $\cW$ has a zero of order $s - t \ge 1$.
  There exists a $\Lambda$-algebra homomorphism
  \[ \varphi \colon \T \longrightarrow W_3 \]
  such that
  \begin{alignat*}{2}
  T_\fl & \mapsto 1 + \chi \epsilon(\fl) + (\chi(\fl) - 1)\frac{1 - \epsilon(\fl)}{\pi} y\quad  &&\text{ for } \fl \nmid \fn p \\
  U_\fl & \mapsto 1  + \frac{\epsilon(\fl) - 1}{\pi} y \quad && \text{ for } \fl \mid \fn, \text{ and } \\
  U_{\fp_i} & \mapsto 1 + \epsilon_i.
  \end{alignat*}
    \end{theorem}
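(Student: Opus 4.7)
Following the template of Theorem~\ref{t:case2}, let $\cH$ denote the image of the Hecke orbit of $\sF$ modulo $\pi^{s+1}$, giving the canonical $\Lambda$-algebra homomorphism $\varphi \colon \T \to \End_{E[\pi]/\pi^{s+1}} \cH$. The plan is to derive the defining relations of $W_3$ as identities on $\sF$ modulo $\pi^{s+1}$, obtain a surjection $W_3 \twoheadrightarrow \varphi(\T) \otimes_{\cO_E} E$, and conclude injectivity by a dimension count.

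Choose $\fq \nmid \fn p$ with $\chi(\fq) \neq 1$ and define $Y \in \tilde{\T}_{(1)}$ as in case 2. The same computations yield $Y \sE(\chi, 1) = \pi \sE(\chi, 1)$, $Y \sE(1, \chi) = 0$, and $Y$ annihilates the middle term of $\tilde{\sF}$ modulo $\pi^{s+1}$. Hence $Y^n \tilde{\sF} \equiv \pi^n \cW \sE(\chi, 1) \pmod{\pi^{s+1}}$ for $n \geq 1$. Since $\cW$ has exact order $s-t$, the quantity $\pi^{t+1} \cW$ has order $s+1$, giving $Y^{t+1} \sF \equiv 0$. The relation $(\pi^t \cW - Y^t) \tilde{\sF} \equiv 0$ follows from $\sE(1, \chi) \equiv \sE(\chi, 1) \equiv E_1(1, \chi_S) \pmod{\pi}$ together with the observation that both $\pi^t \cW$ times the middle term and $\pi^t \cW^2 \sE(\chi, 1)$ have order $\geq s+1$. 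The identity $\pi Y \sF = Y^2 \sF$ holds since both equal $t e[\pi^2 \cW \sE(\chi, 1)]$ modulo $\pi^{s+1}$.

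The $U_\fp$-relations for $\fp \in R$ hinge on the observation that the $U_\fp$-eigenvalue of the Hida family $\sE(\chi, 1)$ equals $\chi(\fp) = 1$, so $(U_\fp - 1) \sE(\chi, 1) = 0$ identically. Consequently, only the middle term of $\tilde{\sF}$ contributes to $(U_\fp - 1) \tilde{\sF}$, and
\[ \prod_{j \in J}(U_{\fp_j} - 1) \tilde{\sF} \equiv (-1)^{s+1} \sL_{\an}^*(\chi) \pi^s E_1(1, \chi_J) \pmod{\pi^{s+1}} \]
for every nonempty $J \subseteq R$ (using $(U_\fp - 1) E_1(1, \chi_I) = E_1(1, \chi_{I \cup \{\fp\}})$). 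This yields $(U_\fp - 1)^2 \sF \equiv \pi (U_\fp - 1) \sF \equiv Y(U_\fp - 1) \sF \equiv 0$, and for $J = R$, using $\nu_1(\sF) = E_1(1, \chi_R)$, the top relation $\prod_i (U_{\fp_i} - 1) \sF \equiv (-1)^{s+1} \sL_{\an}^*(\chi) \pi^s \sF$.

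These relations combine to give a surjective $\Lambda_{(1)}$-algebra homomorphism $W_3 \twoheadrightarrow \varphi(\T) \otimes_{\cO_E} E$ sending $\pi, y, \epsilon_i$ to $\pi, Y, U_{\fp_i} - 1$ respectively. Using $y \pi = y^2$ and $y^t = \pi^t \cW = \pi^s \cdot (\text{unit})$, an $E$-basis of $W_3$ is given by $\{1, \pi, \dotsc, \pi^s\} \cup \{y, y^2, \dotsc, y^{t-1}\} \cup \{\epsilon_J : \emptyset \neq J \subsetneq R\}$, yielding $\dim_E W_3 = 2^r + s + t - 2$. Injectivity is then established by exhibiting $E$-linear independence of the corresponding forms $\pi^a \sF$, $Y^b \sF$, $\epsilon_J \sF$ in $\cH$ by induction on order of vanishing at weight $1$: $\pi^a \sF$ has order $a$, $Y^b \sF$ has order $b + (s - t)$, and $\epsilon_J \sF$ has order $s$. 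The main obstacle is that at intermediate orders $s - t + 1 \leq a \leq s - 1$, both $\pi^a \sF$ and $Y^{a - s + t} \sF$ appear with leading terms proportional to $E_1(1, \chi_S) \pi^a$. This is handled as in case 2: applying $Y$ to any putative relation $\alpha \pi^a \sF + \beta Y^{a - s + t} \sF \equiv 0$ and using $\pi^a Y = Y^{a+1}$ gives $\alpha Y^{a+1} \sF + \beta Y^{a-s+t+1} \sF \equiv 0$; since $s > t$ these two forms have different orders of vanishing, so the leading term at the lower order $a + 1$ forces $\beta = 0$ and hence $\alpha = 0$. The independence of $\{\epsilon_J \sF : \emptyset \neq J \subsetneq R\}$ at order $s$ follows from the linear independence of the Eisenstein series $\{E_1(1, \chi_J)\}$, noting that $\sL_{\an}^*(\chi) \neq 0$ by the definition of $s = r_{\an}(\chi)$.
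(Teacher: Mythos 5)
Your proof is correct and follows essentially the same route as the paper: establish the relations producing a surjection $W_3 \longrightarrow \varphi(\T)\otimes_{\cO_E} E$ (the paper phrases this as the Case~2 surjection from $W_2$ factoring through the quotient $W_3$), and then prove injectivity by a dimension count together with the $E$-linear independence of the corresponding forms in $\cH$, sorted by order of vanishing at weight~$1$. The paper explicitly leaves that linear-independence check to the reader, and your argument fills it in correctly, including the trick of applying $Y$ to separate the intermediate-order forms $\pi^a\sF$ and $Y^{a-s+t}\sF$ whose leading terms are proportional.
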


\begin{proof}
As noted earlier, the proof of Theorem~\ref{t:case2} carries through without the use of the assumption
$\nu_1(\cW) \neq 0$ up through the construction of the homomorphism~(\ref{e:whom}).  It is the injectivity of this homomorphism that used the condition $\nu_1(\cW) \neq 0$.  Indeed, if $\nu_1(\cW) = 0$ as we are currently assuming, then (\ref{e:whom}) is not injective.  We have
\[ Y^{t+1} \tilde{\sF} \equiv \pi^{t+1} \sE(\chi, 1) \cW \equiv 0 \pmod{\pi^{s+ 1}} \]
since $\pi^{s-t} \mid \cW$, hence $Y^{t+1} \sF \equiv 0 \pmod{\pi^{s+1}}.$  Furthermore
\[ Y^{t} \tilde{\sF} \equiv \pi^{t} \sE(\chi, 1) \cW \equiv \pi^t \cW \tilde{\sF} \pmod{\pi^{s+ 1}}. \]
It follows that the homomorphism (\ref{e:whom}) factors through the quotient $W_3$ of $W_2$, and to conclude the proof it remains to show that the induced map $W_3 \longrightarrow \varphi(\T) \otimes_{\cO_E} E$ is injective.
For this it suffices to show that the forms
\[ \{\pi^i \sF\}_{i=0}^{s} \cup \{ Y^i \sF\}_{i=1}^{t-1} \cup \left\{ \prod_{j \in J} (U_{\fp_j} - 1) \sF\right\}_{J \subset R, J \neq \phi, R} \]
are $E$-linearly independent modulo $\pi^{s+1}$.  The demonstration of this fact is similar to the previous cases and left to the reader.
\end{proof}

\section{Construction of a Cohomology Class} \label{s:ccc}

We write  \[ \varphi\colon \T \longrightarrow W \] where $W = W_1, W_2,$ or $W_3$ in cases 1, 2, and 3, respectively,
for the homomorphism $\varphi$  given in Theorems~\ref{t:case1}, \ref{t:case2}, and \ref{t:case3}.
We write $\fm_W$ for the maximal ideal of $W$ and $\fm \subset \T$ for the kernel of the composition
 \[ \varphi\colon \T \longrightarrow W \longrightarrow W/\fm_{W} \cong E. \]
The height 1 prime ideal $\fm$ is generated by $T \in \Lambda$, $T_\fl - (1 + \chi(\fl))$ for $\fl \nmid \fn p$ and $U_\fl - 1$ for $\fl \mid \fn p$.

Let $\T_{(\fm)}$ denote the localization of $\T$ at the prime ideal $\fm$.  Let $L = \Frac(\T_{(\fm)})$ denote the total ring of fractions
of the local ring $\T_{(\fm)}$.  Since the tame character $\chi$ in our space of Hida families has conductor equal
to the tame level $\fn$ of our families, there are no $\fn$-old forms and therefore $\T_{(\fm)}$ is reduced.
 This simple yet crucial observation was not mentioned in \cite{ddp}; we thank H.~Hida for pointing it out to us and refer the reader
 to \cite[Proof of Theorem 3.6 and Corollary 3.7, pp.\ 381--382]{hida} for further details.
As a result, we have a canonical injection $\T_{(\fm)} \rightarrow L$  where $L$ is isomorphic to a product of fields
\begin{equation} \label{e:ldecomp} L = \prod_{i=1}^t L_{\sH_i}. \end{equation}
Each $L_{\sH_i}$ is a finite extension of $\Frac(\Lambda)$ and corresponds to a cuspidal Hida eigenfamily $\sH_i$.
For an integral ideal $\fa \subset \cO_F$, the normalized Fourier coefficient $c(\fa, \sH_i)$ is equal to the image in $L_{\sH_i}$ of the Hecke operator $T_\fa$.
These coefficients generate a finite local $\Lambda$-subalgebra of $L_{\sH_i}$ that we denote $\Lambda_{\sH_i}$ and call the Hecke algebra of $\sH_i$.
The image of $\T_{(\fm)}$ in $L_{\sH_i}$ is the localization of $\Lambda_{\sH_i}$ at a height 1 prime ideal $\fm_{\sH_i}$ lying above $(T) \subset \Lambda$, and
the explicit description of the homomorphism $\varphi$ implies that for prime ideals $\fl \subset \cO_F$ we have
 \begin{equation}
 \begin{alignedat}{2}
 c(\fl, \sH_i) & \equiv  1 + \chi(\fl) && \pmod{\fm_{\sH_i}}  \text{ for } \fl \nmid \fn p, \label{e:clambda} \\
  c(\fl, \sH_i) &\equiv 1  && \pmod{\fm_{\sH_i}} \text{ for } \fl \mid \fn p. 
  \end{alignedat}
  \end{equation}
These congruences simply state that the specialization of $\sH_i$ at  the prime ideal $\fm_{\sH_i}$  is the weight 1 form $E_1(1, \chi_S)$.

\subsection{Representations Associated to Hida Families}

As above, let  $\sH$ denote a cuspidal Hida eigenfamily specializing at a weight 1 prime ideal $\fm_\sH \subset \Lambda_\sH$ to the form $E_1(1, \chi_S)$ (i.e.\ satisfying (\ref{e:clambda})).
   Let $L_\sH = \Frac(\Lambda_\sH)$ denote the fraction field of $\Lambda_\sH$.
The following theorem  (\cite[Theorems 2 and 4]{wiles2}) of Hida and Wiles is crucial for the construction of our cohomology class.

\begin{theorem}[Hida, Wiles] There exists a continuous irreducible Galois representation
\[ \rho_\sH \colon G_F \longrightarrow \GL_2(L_\sH) \]
where $L_\sH$ is endowed with the $\Lambda$-adic topology (i.e.\ the topology induced by the maximal ideal $(\pi_E, T)$ of  $\Lambda$, where $\pi_E$ is a uniformizer for $E$),
such that:
\begin{enumerate}
\item  $\rho_\sH$ is unramified outside $\fn p$;
\item for  primes $\fl \nmid \fn p$, the characteristic polynomial
of $\rho_{\sH}(\Frob_\fl)$ is
\begin{equation} \label{e:charrho}
\chr(\rho_{\sH}(\Frob_\fl))(x) = x^2 - c(\fl, \sH) x + \chi \epsilon (\fl);
\end{equation}
\item for all $\fp \mid p$, we have \begin{equation}
\label{e:localbasis}
 \rho_{\sH}|_{G_\fp} \sim \mat{\chi \epsilon \eta_{\fp, \sH}^{-1}}{*}{0}{\eta_{\fp, \sH}},
 \end{equation}
 where $\eta_{\fp, \sH} \colon G_\fp \longrightarrow \Lambda_{\sH}^*$ is unramified and $\eta_{\fp, \sH}(\rec(\varpi^{-1})) = c(\fp, \sH)$.  Here $\varpi \in F_\fp^*$ is a uniformizer and $\rec \colon F_\fp^* \longrightarrow G_\fp^{\ab}$ is the local Artin reciprocity map.
\end{enumerate}
\end{theorem}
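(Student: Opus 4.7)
The plan is to build $\rho_\sH$ by $p$-adic interpolation of the Galois representations already known classically to be attached to arithmetic specializations of $\sH$. For any height one prime $P$ of $\Lambda_\sH$ lying over $\ker \nu_k \subset \Lambda$ with integer $k \ge 2$, the specialization $\sH_P$ is a $p$-ordinary cuspidal Hilbert modular eigenform of weight $k$ and nebentypus $\chi \omega^{1-k}$. By the work of Shimura, Wiles, Taylor, Blasius--Rogawski, and Carayol, such an eigenform has attached to it a continuous absolutely irreducible Galois representation $\rho_P \colon G_F \to \GL_2(\overline{\Q}_p)$, unramified outside $\fn p$, satisfying $\tr \rho_P(\Frob_\fl) = c(\fl, \sH_P)$ and $\det \rho_P(\Frob_\fl) = \chi(\fl) \N\fl^{k-1}$ for every $\fl \nmid \fn p$.

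Next I would assemble these representations into a pseudo-representation over $\Lambda_\sH$. Define continuous functions $t \colon G_F \to \Lambda_\sH$ and $d = \chi \epsilon \colon G_F \to \Lambda^\ast$ by setting $t(\Frob_\fl) = c(\fl, \sH)$ on Frobenius elements at unramified primes and extending by continuity in the $\Lambda$-adic topology. Reducing modulo the Zariski dense set of height one primes $P$ of arithmetic weight $\ge 2$ recovers $(\tr \rho_P, \det \rho_P)$, so the identities defining a two-dimensional pseudo-character are forced to hold over $\Lambda_\sH$. Since the pseudo-character specializes to an absolutely irreducible representation modulo at least one such $P$, standard results (Wiles, Nyssen--Rouquier) realize it as an honest representation $\rho_\sH \colon G_F \to \GL_2(L_\sH)$, which is then absolutely irreducible over $L_\sH$.

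The local structure at $\fp \mid p$ is handled by invoking $p$-ordinariness of the Hida family. The Hecke eigenvalue $c(\fp, \sH) \in \Lambda_\sH$ is a unit, being congruent to $1$ modulo $\fm_\sH$ by the explicit formulas for $\varphi(U_\fp)$ in Theorems~\ref{t:case1}--\ref{t:case3}. For every classical arithmetic specialization, a theorem of Wiles exhibits an unramified quotient character on $\rho_P|_{G_\fp}$ sending $\rec(\varpi^{-1})$ to $c(\fp, \sH_P)$; interpolating these quotients across the dense family of classical points produces the character $\eta_{\fp, \sH} \colon G_\fp \to \Lambda_\sH^\ast$ in the statement, and the determinant condition forces the complementary character to be $\chi \epsilon \eta_{\fp, \sH}^{-1}$, yielding the shape~(\ref{e:localbasis}).

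The main obstacle is the interpolation of the \emph{local} reducibility at each $\fp \mid p$: characteristic polynomials of Frobenius at unramified primes do not suffice to force a stable filtration in the $\Lambda_\sH$-adic family, so one must leverage the ordinariness condition carefully and use that a $U_\fp$-eigenvalue which is a $p$-adic unit produces an integral splitting at each classical specialization. A subsidiary technical point is that the pseudo-character only realizes as a true representation after extending scalars to the fraction field $L_\sH$, which is why the statement is formulated over $L_\sH$ rather than over $\Lambda_\sH$.
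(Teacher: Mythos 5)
The paper does not prove this statement: it is quoted verbatim with the citation \cite[Theorems 2 and 4]{wiles2} to Wiles's 1988 \emph{Inventiones} article. So there is no internal proof to compare your argument against, and the expected ``proof'' here is a pointer to the literature. That said, your sketch is a reasonable high-level reconstruction of how Wiles actually establishes the result: form the pseudo-character by interpolating traces and determinants over the Zariski-dense set of classical arithmetic specializations, invoke residual absolute irreducibility to realize it as a genuine representation over the fraction field (Wiles introduced pseudo-representations precisely for this step), and then interpolate the ordinary local filtration at each $\fp \mid p$ to get the shape~(\ref{e:localbasis}). You correctly flag the local interpolation as the delicate point.

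Two small corrections to your justification, though. First, the reason $c(\fp, \sH)$ is a unit in $\Lambda_\sH$ is simply that $\sH$ is an \emph{ordinary} eigenfamily, so its $U_\fp$-eigenvalue is a unit by definition of the ordinary projector $e$; appealing to the explicit formulas for $\varphi(U_\fp)$ from Theorems~\ref{t:case1}--\ref{t:case3} is both unnecessary and logically backwards, since those theorems are proved about this specific family and come after the Hida--Wiles input is in place, whereas the statement in question is a general structural theorem about ordinary $\Lambda$-adic eigenforms. Second, for the local filtration one cannot literally ``interpolate the quotient characters across classical points'' --- a family of one-dimensional quotients has no \emph{a priori} coherence --- and the actual argument (in \cite{wiles2}, following Hida) works integrally over $\Lambda_\sH$ with a stable lattice and uses ordinariness at every arithmetic specialization to produce a uniform unramified quotient, with the condition $\eta_{\fp,\sH}(\rec(\varpi^{-1})) = c(\fp,\sH)$ forced by comparison with classical Eichler--Shimura-type relations. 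Your sketch is morally right but that step deserves more than the phrase ``interpolating these quotients.''
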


  Note that by (\ref{e:charrho}) we have $\chr(\rho_{\sH}(\Frob_\lambda))(x) \in \Lambda_{\sH}[x]$, and hence by Cebotarev
 we have \begin{equation} \label{e:chri}
  \chr(\rho_{\sH}(\sigma))(x) \in \Lambda_{\sH}[x] \qquad  \end{equation}
  for all  $\sigma \in G_F$. Moreover by
 (\ref{e:clambda}), (\ref{e:charrho}), and another application of Cebotarev we have
\begin{equation} \label{e:factorrho}
 \chr(\rho_{\sH}(\sigma))(x) \equiv (x - 1)(x - \chi(\sigma))  \pmod{\fm_\sH}
 \end{equation}
for all $\sigma \in G_F$. Note that in applying Cebotarev and the continuity of $\rho_\sH$ to deduce (\ref{e:chri}) and (\ref{e:factorrho}), we are using the fact that
$\Lambda_{\sH}$ and $\fm_\sH$ are finitely generated $\Lambda$-modules and hence are closed in the $\Lambda$-adic topology on $L_{\sH}$.

In order to rigidify the representation $\rho_{\sH}$, we choose an element $\tau \in G_F$ such that $\chi(\tau) \neq 1$.
Let $\Lambda_{\fm_\sH}$ denote the completion of the localization of $\Lambda_{\sH}$ at $\fm_\sH$ with respect to its maximal ideal.  We denote the maximal ideal of $\Lambda_{\fm_\sH}$  by $\hat{\fm}_\sH = \fm_\sH \Lambda_{\fm_\sH}$.
By (\ref{e:factorrho}) and Hensel's Lemma, $\rho_{\sH}(\tau)$ has  distinct eigenvalues $\lambda_1, \lambda_2 \in \Lambda_{\fm_\sH}$ such that
$\lambda_1 \equiv 1 \pmod{\hat{\fm}_\sH}$ and $\lambda_2 \equiv \chi(\tau) \pmod{\hat{\fm}_\sH}.$  After extending scalars to $L_{\fm_{\sH}} = \Frac(\Lambda_{\fm_\sH})$, we can choose a basis for our representation consisting of eigenvectors for $\rho_{\sH}(\tau)$, i.e.\ such that
\begin{equation} \label{e:rhohtau}
\rho_{\sH}(\tau) = \mat{\lambda_1}{0}{0}{\lambda_2}. \end{equation}

In the next section, we will demonstrate how to define a cohomology class using the upper right entries of the representation $\rho_\sH$ in this basis as $\sH$ ranges over the $\sH_i$. Ribet showed how to gain local information about this cohomology class by comparing the ``global" basis satisfying (\ref{e:rhohtau}) to the ``local" basis indicated in (\ref{e:localbasis}). This argument, which Mazur \cite{m} has called ``Ribet's Wrench," does not succeed in our context if the global basis and local basis are the same.  We must show, therefore, that $\tau$ can be chosen so that its eigenvectors do not agree with the eigenvectors of $\rho_{\sH}(G_\fp)$ for any $\fp \mid p$.  Furthermore, we must do this simultaneously for all the finitely many $\sH$ that  occur.

\begin{lemma} \label{l:ev}
Let $v \in L_{\fm_\sH}^2$ be a nonzero vector in the representation space of $\rho_{\sH}$, and let $G_v \subset G_F$ denote the subgroup of elements $\sigma$ such that
$v$ is an eigenvector for $\rho_\sH(\sigma)$.  If $\chi(G_v) \neq 1,$ then $G_v$ has infinite index in $G_F$.
\end{lemma}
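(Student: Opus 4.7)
The plan is to argue by contrapositive: assume $[G_F : G_v] < \infty$ and deduce $\chi(G_v) = 1$. Let $N = \bigcap_{\sigma \in G_F} \sigma G_v \sigma^{-1}$ denote the normal core of $G_v$, a finite-index normal subgroup of $G_F$ contained in $G_v$. The relation $\rho_\sH(g) v = \psi(g) v$ defines a continuous character $\psi \colon G_v \to L_{\fm_\sH}^\ast$; for any $\sigma \in G_F$, the vector $\rho_\sH(\sigma) v$ is an $N$-eigenvector with eigencharacter $\psi^\sigma \colon n \mapsto \psi(\sigma^{-1} n \sigma)$. Because $\rho_\sH$ is two-dimensional, at most two distinct characters of $N$ appear in $\{\psi^\sigma|_N\}$, splitting the analysis into two cases: \emph{(a)} all $\psi^\sigma$ agree on $N$, in which case irreducibility of $\rho_\sH$ forces the $\psi|_N$-eigenspace to be two-dimensional (otherwise $Lv$ would be $G_F$-stable), so $\rho_\sH|_N$ is scalar; \emph{(b)} exactly two eigencharacters occur, in which case $G_F$ transitively permutes the two eigenlines, $G_v$ is the stabilizer of $Lv$ and therefore satisfies $[G_F:G_v]=2$ and $G_v \trianglelefteq G_F$, with $\rho_\sH \cong \mathrm{Ind}_{G_v}^{G_F} \psi$.

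The main obstacle, and the real content of the proof, is eliminating case (a). My plan here is to exploit the ordinary local structure~(\ref{e:localbasis}) at a prime $\fp \mid p$: in the local basis, $\rho_\sH|_{G_\fp}$ is upper-triangular with diagonal characters $\chi\epsilon\eta_{\fp,\sH}^{-1}$ and $\eta_{\fp,\sH}$. For any $n \in I_\fp \cap N$, the element $\rho_\sH(n)$ is simultaneously scalar (by (a)) and upper-triangular with these diagonal entries, which forces the two characters to agree on $I_\fp \cap N$. Since $\chi$ and $\eta_{\fp,\sH}$ are unramified at $\fp$, this collapses to $\epsilon|_{I_\fp \cap N} = 1$. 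However $\epsilon_{\cyc}(I_\fp)$ is an open subgroup of $\Z_p^\ast$ and is not contained in $\mu_{p-1}$, so $\epsilon|_{I_\fp}$ has infinite image, and its restriction to any finite-index subgroup of $I_\fp$ is still nontrivial---contradicting $\epsilon|_{I_\fp \cap N} = 1$.

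Case (b) is then settled by a clean semisimplification calculation. Reducing $\rho_\sH \cong \mathrm{Ind}_{G_v}^{G_F} \psi$ modulo $\fm_\sH$ and comparing with the known semisimplification $1 \oplus \chi$ from~(\ref{e:factorrho}), we see that $\mathrm{Ind}_{G_v}^{G_F} \bar\psi$ has semisimplification $1 \oplus \chi$ as $G_F$-representations. Since $1 \neq \chi$, the induced representation must already be reducible, which by Clifford theory forces $\bar\psi^{\sigma_0} = \bar\psi$ and the existence of an extension $\tilde\psi$ of $\bar\psi$ to a character of $G_F$. Letting $\sgn$ denote the nontrivial character of $G_F/G_v$, one then has $\mathrm{Ind}_{G_v}^{G_F} \bar\psi \cong \tilde\psi \oplus \tilde\psi \cdot \sgn$. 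Matching the multiset $\{\tilde\psi, \tilde\psi\sgn\}$ with $\{1, \chi\}$ and using $\sgn^2 = 1$ to resolve the second ordering (which forces $\chi^2 = 1$ and hence again $\sgn = \chi$) yields $\sgn = \chi$ in either matching. Consequently $\chi$ factors through $G_F/G_v$, so $\chi(G_v) = 1$, as required.
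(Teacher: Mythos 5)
Your proof is correct but proceeds along a genuinely different path from the paper's, so it is worth comparing the two. The paper's argument is a Ribet-style cohomological one and is essentially a microcosm of what is done in \S\ref{s:kappa}: fix $\tau \in G_v$ with $\chi(\tau)\neq 1$, diagonalize $\rho_\sH(\tau)$ with $v$ as the first basis vector, observe that the diagonal entries satisfy $a_\sH \equiv 1$, $d_\sH \equiv \chi \pmod{\hat\fm_\sH}$, and then show that the off-diagonal entry $c_\sH$ (which vanishes on $G_v$) defines a $1$-cocycle valued in $C/\hat\fm_\sH C(\chi)$; if $[G_F:G_v]<\infty$, inflation--restriction forces the class to be a coboundary, evaluation at $\tau$ forces the cocycle to vanish identically, and Nakayama's Lemma then gives $C=0$, contradicting irreducibility of $\rho_\sH$. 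Your proposal instead runs a Clifford/Mackey-theoretic analysis on the normal core $N$ of $G_v$: the two alternatives are that $\rho_\sH|_N$ is scalar or that $\rho_\sH$ is dihedral, i.e.\ induced from the index-$2$ subgroup $G_v$. You rule out the scalar case using an input the paper's proof never touches, namely the ordinary local structure~(\ref{e:localbasis}) at $\fp\mid p$ combined with the infinite ramification of the $\Lambda$-adic cyclotomic character $\epsilon$ on $I_\fp$ (which persists on the finite-index subgroup $I_\fp\cap N$); and in the dihedral case you compare the reduction of $\mathrm{Ind}_{G_v}^{G_F}\psi$ with the known semisimplification $1\oplus\chi$ from~(\ref{e:factorrho}) to force the quadratic character of $G_F/G_v$ to equal $\chi$, whence $\chi(G_v)=1$.

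Both arguments are sound. The trade-offs: the paper's proof is shorter, entirely global, and deliberately rehearses the cocycle construction used later, so it fits the narrative of the paper; yours is longer and uses the local-at-$p$ shape of $\rho_\sH$ as an extra ingredient, but in exchange extracts more structural information in the finite-index case ($[G_F:G_v]=2$, $\rho_\sH$ dihedral over $G_v$, and $\chi^2=1$). One step you should tighten if you write this up: reducing the induction isomorphism modulo $\fm_\sH$ requires a word about integrality of $\psi$ (e.g.\ choose a $G_F$-stable $\Lambda_{\fm_\sH}$-lattice, or argue directly with traces: $\tr\rho_\sH$ vanishes off $G_v$, so~(\ref{e:factorrho}) gives $1+\chi(\sigma)\equiv 0\pmod{\fm_\sH}$ for $\sigma\notin G_v$, hence $\chi(\sigma)=-1$ there and $\chi(G_v)=1$ by multiplicativity — this sidesteps the reduction of $\psi$ entirely and shortens your case (b) considerably).
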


\begin{proof}
Fix a $\tau \in G_v$ such that $\chi(\tau) \neq 1$.  As above let  $\lambda_1, \lambda_2 \in \Lambda_{\fm_\sH}$ be the eigenvalues of $\rho_\sH(\tau)$
such that
$\lambda_1 \equiv 1 \pmod{\hat{\fm}_\sH}$ and $\lambda_2 \equiv \chi(\tau) \pmod{\hat{\fm}_\sH}.$
Choose a basis for $\rho_\sH(\sigma) = \mat{a_\sH(\sigma)}{b_\sH(\sigma)}{c_\sH(\sigma)}{d_\sH(\sigma)}$ whose first vector is $v$ and such that $\rho_\sH(\tau)$ is diagonal; hence
\begin{equation} \label{e:rhotauH}
 \rho_\sH(\tau) = \mat{\lambda_1}{0}{0}{\lambda_2} \quad \text{ or } \quad \rho_\sH(\tau) = \mat{\lambda_2}{0}{0}{\lambda_1}.
 \end{equation}
 Let us for the moment assume that the first of these cases holds, as the second case is similar and proceeds in the same fashion.

By (\ref{e:chri}) we have
\[ a_\sH(\sigma) + d_{\sH}(\sigma) = \tr \rho_{\sH}(\sigma) \in \Lambda_{\sH} \subset \Lambda_{\fm_\sH} \]
for any $\sigma \in G_F$
and moreover by (\ref{e:factorrho}) we have
 \begin{equation}
  a_{\sH}(\sigma) + d_{\sH}(\sigma) \equiv 1 + \chi(\sigma) \pmod{\hat{\fm}_{\sH}}.  \label{e:adcong}
  \end{equation}
Now by (\ref{e:rhotauH}):
\begin{equation}
\begin{alignedat}{2}
a_{\sH}(\tau) &= \lambda_1 \equiv 1 && \pmod{\hat{\fm}_{\sH}},  \label{e:adhtau} \\
d_{\sH}(\tau) &= \lambda_2 \equiv \chi(\tau) && \pmod{\hat{\fm}_{\sH}}. 
\end{alignedat}
\end{equation}
We have
\begin{alignat}{2}
 1 + \chi(\sigma)\chi(\tau) & \equiv a_{\sH}(\sigma \tau) + d_{\sH}(\sigma\tau)   && \pmod{\hat{\fm}_{\sH}} \label{e:stsum} \\
& \equiv a_{\sH}(\sigma) + d_{\sH}(\sigma) \chi(\tau) && \pmod{\hat{\fm}_{\sH}}, \label{e:stbreak}
\end{alignat}
where (\ref{e:stsum}) follows from (\ref{e:adcong}) with $\sigma$ replaced by $\sigma\tau$ and (\ref{e:stbreak}) follows from (\ref{e:adhtau}).  Now (\ref{e:adcong}) and (\ref{e:stbreak}) imply that
\begin{equation} \label{e:adcong2}
 a_{\sH}(\sigma) \equiv 1 \pmod{\hat{\fm}_{\sH}}, \qquad d_{\sH}(\sigma) \equiv \chi(\sigma) \pmod{\hat{\fm}_{\sH}}.
\end{equation}
(In particular, $a_{\sH}(\sigma), d_{\sH}(\sigma) \in \Lambda_{\fm_\sH}$.)

Let $C_0$ denote the $\Lambda_{\sH}$-module generated by the elements $c_{\sH}(\sigma)$ for $\sigma \in G_F$ and let
$C$ denote the $\Lambda_{\fm_\sH}$-module generated by the  $c_{\sH}(\sigma)$.
The continuity of $\rho_\sH$ and the compactness of $G_F$ imply that $C_0$ is compact.  It follows that $C_0$ is a finitely-generated $\Lambda_\sH$-module, and hence that
$C$ is a finitely generated $\Lambda_{\fm_\sH}$-module.

  The equation
\[ c_{\sH}(\sigma \tau) = c_{\sH}(\sigma)a_{\sH}(\tau)  + d_{\sH}(\sigma)c_{\sH}(\tau) \]
together with (\ref{e:adcong2}) implies that  $\overline{c}_\sH(\sigma) \in C/\hat{\fm}_{\sH} C$ is a 1-cocycle representing a cohomology class $\kappa \in H^1(G_F, C/\hat{\fm}_\sH C(\chi))$.

The restriction of  $\kappa$ to $G_v$ clearly vanishes, since $c(G_v) = 0$.
If $G_v$ has finite index in $G_F$, then the inflation-restriction sequence shows that $\kappa$ itself is a trivial cohomology class, i.e.\ we have $\overline{c}_{\sH}(\sigma) = (\chi(\sigma) - 1)x$ for some $x \in C/\hat{\fm}_{\sH}C$.  Evaluating at $\sigma = \tau$ we see that in fact $x=0$,
i.e.\ the image of $c_\sH$ in  $C/\hat{\fm}_\sH C$ is zero.  However, the $c_\sH(\sigma)$ generate the module  $C/\hat{\fm}_\sH C$ by definition.
 Therefore $C/\hat{\fm}_\sH C = 0$ and hence by Nakayama's Lemma, we must have $C=0$; hence $c_\sH$ is zero as a function on $G_F$.  This contradicts the irreducibility of $\rho_\sH$,
 and hence $G_v$ must have infinite index in $G_F$.

 If the second case in (\ref{e:rhotauH}) holds, then $\overline{c}_\sH(\sigma) \in C/\hat{\fm}_{\sH} C$ represents a cohomology class $\kappa \in H^1(G_F, C/\hat{\fm}_\sH C(\chi^{-1}))$ and the same argument goes through.
\end{proof}

For each prime $\fp \in R$ and each Hida family $\sH$ as above, let $v_{\fp, \sH} \in L_{\fm_\sH}^2$ be the eigenvector for $\rho_\sH(G_{\fp})$.

\begin{lemma} \label{l:tauexists} There exists a $\tau \in G_F$ such that $\chi(\tau) \neq 1$ and such that $v_{\fp, \sH}$ is not an eigenvector
for $\rho_\sH(\tau)$ for all $\sH$ and $\fp$.
\end{lemma}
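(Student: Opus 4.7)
The plan is to reduce to Lemma~\ref{l:ev} and then invoke a standard measure-theoretic fact about profinite groups. First I would observe that there are only finitely many pairs $(\fp, \sH)$ to worry about: the set $R$ is finite by hypothesis, and the number of Hida families $\sH$ occurring is the number of factors $t$ in the decomposition~(\ref{e:ldecomp}). Write $H_{\fp, \sH} = G_{v_{\fp, \sH}}$ for the subgroup of $\sigma \in G_F$ such that $v_{\fp, \sH}$ is an eigenvector of $\rho_\sH(\sigma)$. Each $H_{\fp, \sH}$ is a \emph{closed} subgroup of $G_F$, because the condition ``$v_{\fp,\sH}$ is an eigenvector of $\rho_\sH(\sigma)$'' is the vanishing of $\rho_\sH(\sigma) v_{\fp,\sH} \wedge v_{\fp,\sH}$, a continuous function of $\sigma$ into $L_{\fm_\sH}$ (with the $\Lambda$-adic topology).

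Next I would split the pairs into two types. If $\chi(H_{\fp, \sH}) = 1$, then $H_{\fp, \sH} \subset \ker \chi$, so the condition $\chi(\tau) \neq 1$ already guarantees $\tau \notin H_{\fp, \sH}$ and this pair poses no obstruction. If instead $\chi(H_{\fp, \sH}) \neq 1$, then by Lemma~\ref{l:ev} we have $[G_F : H_{\fp, \sH}] = \infty$; call such pairs ``bad'' and let $S$ denote the union of the bad $H_{\fp, \sH}$.

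Finally I would finish by a Haar measure argument. Since $G_F$ is profinite, it is a compact group with normalized Haar measure $\mu$, and a closed subgroup of infinite index has $\mu$-measure zero. Hence $\mu(S) = 0$ as a finite union of measure-zero sets, while $\mu(G_F \setminus \ker \chi) = 1 - 1/|\chi(G_F)| > 0$ since $\ker \chi$ has finite index. Consequently
\[ (G_F \setminus \ker \chi) \setminus S \]
has positive measure, in particular is nonempty, and any element $\tau$ in this set has $\chi(\tau) \neq 1$ and satisfies $\tau \notin H_{\fp, \sH}$ for all pairs $(\fp, \sH)$, as required.

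There is no serious obstacle: the argument is essentially a direct corollary of Lemma~\ref{l:ev}. The one step that requires some care is verifying that each $H_{\fp, \sH}$ is closed and that the infinite-index condition in a profinite group yields measure zero; a Baire-category argument (closed subgroups of infinite index in a profinite group have empty interior, since they would otherwise contain an open normal subgroup and hence have finite index) would serve equally well in place of the Haar measure argument.
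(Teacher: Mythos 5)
Your proof is correct, and it reaches the conclusion by a genuinely different route than the paper in the final step. You and the paper both begin the same way: there are only finitely many pairs $(\fp,\sH)$; the pairs with $\chi(G_{v_{\fp,\sH}}) = 1$ are automatically handled by any $\tau$ with $\chi(\tau)\neq 1$; and Lemma~\ref{l:ev} gives $[G_F : G_{v_{\fp,\sH}}] = \infty$ for the remaining ``bad'' pairs. Where the arguments diverge is in how this infinite-index information is exploited. The paper carries out an explicit finite induction, at each stage passing to a slightly larger finite Galois extension $H_i \supset H_{i-1}$ of $F$ that sees some $\alpha_i$ in the fixed field of $G_{v_i}$ but not in $H_{i-1}$, and lifting $\tau_{i-1}$ to a $\tau_i$ moving $\alpha_i$. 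Your argument instead appeals to the Haar measure (or Baire category) of $G_F$: a \emph{closed} subgroup of infinite index is null (equivalently nowhere dense), the open set $G_F \setminus \ker\chi$ has positive measure, and a finite union of null sets cannot cover it. Both arguments implicitly require the $G_{v_{\fp,\sH}}$ to be closed so that Lemma~\ref{l:ev} has teeth---in the paper so that ``the fixed field of $G_{v_i}$'' is an infinite extension of $F$, in yours so that nullity/nowhere-density applies---and you make this point explicit with the wedge-vanishing observation, which is a nice clarification. Your version is cleaner and avoids juggling the tower $H_0 \subset H_1 \subset \cdots$; the paper's version is more constructive. Both are valid.
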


\begin{proof}
In the notation of Lemma~\ref{l:ev}, we must show that there exists a $\tau \in G_F$ such that $\chi(\tau) \neq 1$
and $\tau \not \in G_{v_{\fp, \sH}}$ for all $\fp$ and $\sH$.  Label the $v_{\fp, \sH}$ such that $\chi(G_{v_{\fp, \sH}}) \neq 1$ as
$v_1, \dotsc, v_n$ and the remaining $v_{\fp, \sH}$ as $v_{n+1}, \dotsc, v_m$.

 We construct $\tau$ inductively.
Let $\tau_0 \in \Gal(H/F)$ be nontrivial, so $\chi(\tau_0) \neq 1$.  Let $H_0 = H$.
We define $\tau_i$ for   $i = 1, \dotsc, n$ recursively as follows.
 Since $G_{v_i}$ has infinite index in $G_F$ by Lemma~\ref{l:ev}, there exists an $\alpha_i \not \in H_{i-1}$ in
the fixed field of $G_{v_i}$ acting on $\overline{F}$.  Let $H_i$ be the Galois closure of $H(\alpha_i)$ over $F$, and let $\tau_i$ be an element of $\Gal(H_i/F)$ such that $\tau_i|_{H_{i-1}} = \tau_{i-1}$ and $\tau_i(\alpha_i) \neq \alpha_i$.  Then any $\tau \in G_F$ restricting to $\tau_i$ will satisfy $\chi(\tau) \neq 1$ and $\tau \not \in G_{v_i}$, since $\tau$ acts nontrivially on the fixed field of $G_{v_i}$.

After defining $\tau_1, \dotsc, \tau_n$ in this way, let $\tau \in G_F$ be any element restricting to $\tau_n$ on $H_n$.
 Then by construction, $\chi(\tau) \neq 1$ and $\tau \not \in G_{v_i}$ for $i=1, \dots, n$.  Clearly $\tau \not \in G_{v_i}$ for $i = n+1, \dots m$, since $\chi(\tau) \neq 1$ and $\chi(G_{v_i}) = 1$ for these $i$.  This concludes the proof.
\end{proof}

\subsection{Construction of the Cohomology Class} \label{s:kappa}

Recall that $\T_{(\fm)}$ denotes the localization of $\T$ at the prime ideal $\fm$, and that \[ L = \prod_{i=1}^t L_{\sH_i} \] denotes its total ring of fractions.  Let $\overline{\T}$ denote the image of $\T$ in $\T_{(\fm)}$.
The product of the Galois representations $\rho_{\sH_i}$ for $i=1, \dotsc, t$
yields a continuous Galois representation
\[ \rho\colon G_F \longrightarrow \GL_2(L), \]
where $L$ is endowed with the $\Lambda$-adic topology,
satisfying:
\begin{enumerate}
\item  $\rho$ is unramified outside $\fn p$;
\item for  primes $\fl \nmid \fn p$, the characteristic polynomial
of $\rho(\Frob_\fl)$ is
\begin{equation} \label{e:charrho2}
\chr(\rho_{\sH}(\Frob_\fl))(x) = x^2 - \overline{T}_\fl x + \chi \epsilon (\fl),
\end{equation}
where $\overline{T}_\fl$ denotes the image of $T_\fl$ in $\overline{\T}$;
\item for all $\fp \mid p$, we have \begin{equation}
\label{e:localbasis2}
 \rho|_{G_\fp} \sim \mat{\chi \eta_\fp^{-1} \epsilon}{*}{0}{\eta_\fp},
 \end{equation}
 where $\eta \colon G_\fp \longrightarrow \overline{\T}^*$ is unramified and $\eta_\fp(\rec(\varpi^{-1})) = \overline{U}_\fp$.
 \end{enumerate}

  Let $\T_\fm$ denote the completion of $\T_{(\fm)}$ with respect to its maximal ideal $\fm \T_{(\fm)}$.   We write $\hat{\fm} = \fm \T_\fm$ for the maximal ideal of $\T_\fm$.
 Let $\tau \in G_F$ satisfy the conditions of Lemma~\ref{l:tauexists}.  By Hensel's Lemma, there exist unique roots $\lambda_1, \lambda_2 \in \T_\fm$ of the characteristic polynomial
  of $\rho(\tau)$ such that $\lambda_1 \equiv 1 \pmod{\fm}, \lambda_2 \equiv \chi(\tau) \pmod{\fm}$.  We extend scalars for the representation $\rho$ to $L_\fm = \Frac(\T_\fm)$ and choose a basis for the representation consisting of the associated eigenvectors for $\rho(\tau)$, i.e. such that
  \begin{equation} \label{e:rhotau}
   \rho(\tau) = \mat{\lambda_1}{0}{0}{\lambda_2}.
   \end{equation}

We can now construct our desired cohomology class following the method introduced in the proof of Lemma \ref{l:ev}.  Write $\rho(\sigma) = \mat{a(\sigma)}{b(\sigma)}{c(\sigma)}{d(\sigma)}.$  Using (\ref{e:charrho2}) and the fact that $\overline{T}_\fl \equiv 1 + \chi(\lambda) \pmod{\fm} $, it follows from Cebotarev that
\begin{equation} \label{e:aplusd1}
 a(\sigma) + d(\sigma) \in \overline{\T} \subset \T_\fm
 \end{equation} and
 \begin{equation} a(\sigma) + d(\sigma) \equiv 1 + \chi(\sigma) \pmod{\fm \overline{\T}}.  \label{e:aplusd2}
 \end{equation}
 Our applications of Cebotarev and the continuity of $\rho$ to deduce (\ref{e:aplusd1}) and (\ref{e:aplusd2}) rely on the fact that $\T$ and $\fm \subset \T$ (and hence their images in $\T_{(\fm)}$) are finitely generated $\Lambda$-modules and  are therefore closed in the $\Lambda$-adic topology.

Following the argument from (\ref{e:adcong})--(\ref{e:adcong2}) and using (\ref{e:rhotau}), we deduce that $a(\sigma), d(\sigma) \in  \T_\fm$ and
\begin{equation} \label{e:adcong3}
 a(\sigma) \equiv 1 \pmod{\hat{\fm}}, \qquad d(\sigma) \equiv \chi(\sigma) \pmod{\hat{\fm}}.
 \end{equation}
Now let $B$ denote the $\T_\fm$-module generated by the $b(\sigma)$ for $\sigma \in G_F$. Repeating the compactness argument from the proof of Lemma~\ref{l:ev} shows that $B$ is a finitely generated $\T_\fm$-module.   Define the $E$-vector space $\overline{B} = B/\hat{\fm} B$ and let $\overline{b}(\sigma)$ denote the image of $b(\sigma)$ in $\overline{B}$.  The equation
\[ b(\sigma \sigma') = a(\sigma) b(\sigma') + b(\sigma) d(\sigma'), \qquad \sigma, \sigma' \in G_F \] together with (\ref{e:adcong3})
implies that the function
\begin{equation} \label{e:kappadef}
 \kappa(\sigma) = \overline{b}(\sigma) \chi^{-1}(\sigma)\end{equation}
is a 1-cocycle representing a cohomology class  $[\kappa] \in H^1(G_F, \overline{B}(\chi^{-1}))$.

\subsection{Interlude on the Homomorphism $\varphi$}

The local Artin ring $W$ is complete with respect to its maximal ideal $\fm_W$, since $\fm_W^{r_{\an}+1} =0$.  As a result, the homomorphism $\varphi\colon \T \longrightarrow W$ extends canonically to a surjective homomorphism
\[ \varphi_\fm \colon \T_\fm \longrightarrow W. \]

The arguments used to deduce the congruences (\ref{e:adcong3}) can be refined to calculate the images of $a(\sigma)$ and $d(\sigma)$ under the homomorphism $\varphi_\fm$.  The key observation that allows this is the following.  While it is clear that $\varphi_\fm \pmod{\fm_W}$ decomposes as the sum of  two characters (namely, $1$ and $\chi$), the same is in fact true for the full homomorphism $\varphi_\fm$.
In cases 2 and 3, define the ``$\Lambda$-adic cyclotomic character in the variable $y$",
\[  \epsilon_y\colon G_F \longrightarrow W^* \]
to be the character $\epsilon$ with the variable $\pi$ replaced by $y$, i.e.\ if $\epsilon(\sigma) = \sum_{i=1}^{\infty} a_i \pi^i,$ then
\begin{align}
\epsilon_{y}(\sigma) &= \sum_{i=0}^{\infty}a_i y^i  \label{e:ycyc} \\
&= 1 + \frac{\epsilon(\sigma) - 1}{\pi} y. \label{e:ycyc2}
\end{align}
Note that (\ref{e:ycyc}) is a finite sum since $y$ is nilpotent, and (\ref{e:ycyc2}) holds from the relation $\pi y = y^2$ in the ring $W$.
Define $\epsilon_{\pi - y}(\sigma)$ similarly, with $y$ replaced by $\pi - y$.
Define
two homomorphisms
\[ \psi_1, \psi_2 \colon G_F \longrightarrow W^*  \]
as follows:
\begin{align*}
\psi_1(\sigma) &= \begin{cases}
1 & \text{ case 1 } \\
\epsilon_y(\sigma)   & \text{ cases 2 and 3,}
\end{cases} \\
\psi_2(\sigma) &= \begin{cases}
\chi\epsilon(\sigma)  & \text{ case 1 } \\
\chi \epsilon_{\pi - y} (\sigma)& \text{ cases 2 and 3}.
\end{cases}
\end{align*}

\begin{lemma} \label{l:adcongs} We have
 \begin{equation}
   \begin{aligned}
  \varphi_{\fm}( a(\sigma)) &= \psi_1(\sigma) \label{e:finalad} \\
    \varphi_{\fm}(d(\sigma)) &= \psi_2(\sigma). 
    \end{aligned}
    \end{equation}
\end{lemma}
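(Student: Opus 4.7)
My plan is to exploit the fact that after applying $\varphi_\fm$, the pseudo-character $\tr\rho$ decomposes as a sum of two honest characters $\psi_1 + \psi_2$, and then to disentangle $a(\sigma)$ from $d(\sigma)$ by also evaluating the trace at $\sigma\tau$ and using the diagonal shape of $\rho(\tau)$ set up in (\ref{e:rhotau}).

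The first step is a preparatory check that $\psi_1$ and $\psi_2$ are genuine continuous homomorphisms $G_F \to W^*$ satisfying $\psi_1 \psi_2 = \chi\epsilon$. In cases 2 and 3, the relations $\pi y = y^2$ and $y(\pi - y) = 0$ in $W$ force $y^i = \pi^{i-1} y$ for $i \ge 1$ and $\epsilon_y \cdot \epsilon_{\pi - y} = \epsilon$; a direct expansion of $(1 + \frac{\epsilon(\sigma) - 1}{\pi} y)(1 + \frac{\epsilon(\sigma') - 1}{\pi} y)$ then confirms the multiplicativity of $\epsilon_y$, and likewise for $\epsilon_{\pi - y}$. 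In case 1 this is trivial since $\psi_1 = 1$ and $\psi_2 = \chi\epsilon$. Next, I would use Chebotarev density to establish
\[ \varphi_\fm(\tr\rho(\sigma)) = \psi_1(\sigma) + \psi_2(\sigma) \qquad \text{for all } \sigma \in G_F. \]
A short calculation from the explicit formulas for $\varphi(T_\fl)$ in Theorems~\ref{t:case1}, \ref{t:case2}, \ref{t:case3} shows this identity on Frobenius classes at primes $\fl \nmid \fn p$; both sides are continuous functions of $\sigma$ (the left by (\ref{e:aplusd1}) together with continuity of $\rho$ and $\varphi_\fm$; the right since $W$ is a finite-dimensional $E$-algebra), and Chebotarev extends it to all of $G_F$.

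Specializing to $\sigma = \tau$ and using $\det\rho(\tau) = \chi\epsilon(\tau) = \psi_1(\tau)\psi_2(\tau)$, I conclude that $\{\varphi_\fm(\lambda_1), \varphi_\fm(\lambda_2)\}$ and $\{\psi_1(\tau), \psi_2(\tau)\}$ are two factorizations of the same monic quadratic over $W$. Since $\psi_1(\tau) \equiv 1$ and $\psi_2(\tau) \equiv \chi(\tau) \pmod{\fm_W}$, and $\chi(\tau) \neq 1$, the Hensel uniqueness that defined $\lambda_1, \lambda_2$ in the complete local ring $W$ forces $\varphi_\fm(\lambda_1) = \psi_1(\tau)$ and $\varphi_\fm(\lambda_2) = \psi_2(\tau)$.

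With these ingredients in hand, the last step is pure linear algebra. Because $\rho(\tau)$ is diagonal in the chosen basis, $\rho(\sigma\tau) = \rho(\sigma)\rho(\tau)$ has diagonal entries $a(\sigma)\lambda_1$ and $d(\sigma)\lambda_2$; applying $\varphi_\fm$ to the trace at $\sigma$ and at $\sigma\tau$ yields the two-by-two system
\begin{align*}
\varphi_\fm(a(\sigma)) + \varphi_\fm(d(\sigma)) &= \psi_1(\sigma) + \psi_2(\sigma), \\
\psi_1(\tau) \varphi_\fm(a(\sigma)) + \psi_2(\tau)\varphi_\fm(d(\sigma)) &= \psi_1(\sigma)\psi_1(\tau) + \psi_2(\sigma)\psi_2(\tau),
\end{align*}
whose coefficient determinant $\psi_2(\tau) - \psi_1(\tau) \equiv \chi(\tau) - 1 \not\equiv 0 \pmod{\fm_W}$ is a unit in $W$. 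Cramer's rule then produces $\varphi_\fm(a(\sigma)) = \psi_1(\sigma)$ and $\varphi_\fm(d(\sigma)) = \psi_2(\sigma)$. I expect the main obstacle to be the preparatory bookkeeping, namely verifying that $\epsilon_y$ and $\epsilon_{\pi - y}$ really are multiplicative in the Artinian ring $W$ and that their product equals $\epsilon$; once this is in place the Chebotarev argument and the linear algebra are routine.
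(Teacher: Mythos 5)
Your argument is correct and is essentially the paper's own proof: compute $\varphi_\fm(T_\fl)=\psi_1(\Frob_\fl)+\psi_2(\Frob_\fl)$, extend by Chebotarev, deduce $\varphi_\fm(\lambda_i)=\psi_i(\tau)$ from the factorization of the characteristic polynomial and the mod-$\fm_W$ congruences, then solve the two-by-two linear system obtained by evaluating the trace identity at $\sigma$ and at $\sigma\tau$. The only differences are cosmetic elaborations (explicitly checking multiplicativity of $\epsilon_y$ and framing the identification of $\varphi_\fm(\lambda_i)$ via Hensel uniqueness in the Artinian local ring $W$), which the paper leaves implicit.
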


\begin{proof}
A direct computation shows that for $\fl \nmid \fn p$, we have
\begin{equation} \label{e:psi1}
 \varphi_{\fm}(T_\fl) = \psi_1(\Frob_\fl) + \psi_2(\Frob_\fl). \end{equation}
Furthermore, it is easy to see that $\epsilon_y \epsilon_{\pi - y} = \epsilon$ using the relation $\pi y = y^2$, and hence
\begin{equation} \psi_1 \psi_2 = \chi \epsilon. \label{e:psi2}
\end{equation}

Now, (\ref{e:psi1}) implies that
\begin{equation} \label{e:adcong4}
\varphi_{\fm}(a(\sigma) + d(\sigma)) = \psi_1(\sigma) + \psi_2(\sigma)
 \end{equation}
for all $\sigma \in G_F$.  The fact that $\psi_1 \equiv 1 \pmod{\fm_W}$ and $\psi_2 \equiv \chi \pmod{\fm_W}$ along with
\[ \varphi_{\fm}(\chr(\rho(\sigma))(x)) = (x - \psi_1(\sigma))(x - \psi_2(\sigma)), \]
which follows from (\ref{e:psi2}) and (\ref{e:adcong4}), implies that
\begin{align}
\varphi_{\fm}(\lambda_1) &= \psi_1(\tau), \\
\varphi_{\fm}(\lambda_2) &= \psi_2(\tau).
\end{align}
Now (\ref{e:adcong4})  applied with $\sigma\tau$ implies that
\begin{equation} \label{e:adcong5}
\varphi_\fm(a(\sigma))\psi_1(\tau) + \varphi_\fm(d(\sigma))\psi_2(\tau) = \psi_1(\sigma\tau) + \psi_2(\sigma\tau).
\end{equation}
Solving (\ref{e:adcong4}) and (\ref{e:adcong5}) yields (\ref{e:finalad}) as desired.
\end{proof}

\begin{remark} Let $I$ be the kernel of $ \varphi' \colon \T_\fm \longrightarrow E[\pi]/(\pi^{r_{\an}+1})$. As in \S\ref{s:kappa}, Lemma \ref{l:adcongs} can be used to construct a cohomology class $[\tilde{\kappa}]$ in $H^1(G_F, (B/IB)(\psi_1\psi_2^{-1}))$. Applying the arguments of \cite{mw} (see also \cite{skinnercmi}) one can deduce a lower bound for the $E$-dimension of $B/IB$ as follows. Let $J$ (the ``Eisenstein ideal") denote the kernel of the structure map $\Lambda_{(1)} \longrightarrow \T_{\fm}/I$. Then there are isomorphisms $\Lambda_{(1)}/J \cong \T_{\fm}/I \cong E[\pi]/(\pi^{r_{\an}+1})$. Hence $J = (\pi^{r_{\an}+1}) \subset \Lambda_{(1)}$. Let $\Fitt_AM$ denote the initial Fitting ideal of a finitely presented $A$-module $M$. Then
\[
\Fitt_{\Lambda_{(1)}} (B/IB) \ (\text{mod } J) = \Fitt_{\Lambda_{(1)}/J} (B/IB) = \Fitt_{\T_{\fm}/I} (B/IB) = \Fitt_{\T_{\fm}} B \ (\text{mod } I) =0.
\]
The last equality holds because $B$ is a faithful $\T_{\fm}$-module. Hence $\Fitt_{\Lambda_{(1)}} (B/IB) \subset J$ and
\[
\dim_EB/IB \geq \dim_E\Lambda_{(1)}/J = r_{\an}+1.
\]
However, it is unclear if $[\tilde{\kappa}]$ can be used to construct $r$ cyclotomic cohomology classes in $H^1_R(G_F, E(\chi^{-1}))$.
\end{remark}

\subsection{Local Behavior of the Cohomology Class} \label{s:lbcc}

We now study in detail the cohomology class $\kappa$ constructed in \S\ref{s:kappa}.

For each place $\fp \mid p$, there is a basis for which the representation $\rho|_{G_\fp}$ takes the shape given in (\ref{e:localbasis2}).
Let $\mat{A_\fp}{B_\fp}{C_\fp}{D_\fp} \in \GL_2(L_\fm)$ denote the change of basis matrix taking this local basis to our fixed global basis satisfying (\ref{e:rhotau}), i.e. such that
\begin{equation} \label{e:changeofbasis}
 \mat{a(\sigma)}{b(\sigma)}{c(\sigma)}{d(\sigma)} \mat{A_\fp}{B_\fp}{C_\fp}{D_\fp} = \mat{A_\fp}{B_\fp}{C_\fp}{D_\fp} \mat{\chi \eta_\fp^{-1} \epsilon(\sigma)}{*}{0}{\eta_\fp(\sigma)}  \end{equation}
for $\sigma \in G_{\fp}.$

\begin{lemma} \label{l:ac}
The elements $A_\fp$ and $C_\fp$ are invertible in $L_\fm$.
\end{lemma}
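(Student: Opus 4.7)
The plan is to leverage the careful choice of $\tau$ made in Lemma~\ref{l:tauexists}. First I would apply both sides of the change of basis equation~(\ref{e:changeofbasis}) to the local basis vector $e_1$ and use the upper-triangular shape of $\rho|_{G_\fp}$ in that basis; this shows that the first column $v = (A_\fp, C_\fp)^T$ of the change of basis matrix is a common eigenvector for all of $\rho(G_\fp)$, with eigencharacter $\chi\eta_\fp^{-1}\epsilon$.

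Next I would pass to each Hida component. Under the projection $\T_\fm \to \Lambda_{\fm_{\sH_i}}$ corresponding to the minimal prime of $\T_\fm$ associated to $\sH_i$, the representation $\rho$ pushes forward to (the extension of scalars of) $\rho_{\sH_i}$, and $v$ pushes forward to a vector in $L_{\fm_{\sH_i}}^2$ that is nonzero, being the first column of a matrix in $\GL_2$ over a field. This pushed-forward vector is a $G_\fp$-eigenvector for $\rho_{\sH_i}$, so it is a nonzero scalar multiple of the vector $v_{\fp,\sH_i}$ from Lemma~\ref{l:tauexists}. To conclude that $A_\fp$ and $C_\fp$ are invertible in $L_\fm$, it then suffices to show that both coordinates of this pushed-forward $v$ are nonzero in every component.

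Finally I would combine with~(\ref{e:rhotau}): in the global basis, $\rho(\tau) = \diag(\lambda_1, \lambda_2)$, and since $\lambda_1 \equiv 1 \not\equiv \chi(\tau) \equiv \lambda_2 \pmod{\hat{\fm}}$, the two diagonal entries remain distinct in each Hida component. Consequently the only eigenlines of $\rho(\tau)$ in any component are the coordinate axes $\langle e_1\rangle$ and $\langle e_2\rangle$. By the defining property of $\tau$, $v_{\fp,\sH_i}$ is not an eigenvector for $\rho_{\sH_i}(\tau)$, so the push-forward of $v$ cannot be proportional to $e_1$ or $e_2$ in the $i$-th component; both of its coordinates must therefore be nonzero there, giving the required invertibility of $A_\fp$ and $C_\fp$ in $L_\fm$. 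The argument is essentially formal once $\tau$ has been pinned down, and the real obstacle was already overcome in Lemma~\ref{l:tauexists}, where the irreducibility of each $\rho_{\sH_i}$ (via Lemma~\ref{l:ev}) was used to guarantee that $\tau$ could be taken simultaneously outside every local stabilizer $G_{v_{\fp,\sH_i}}$.
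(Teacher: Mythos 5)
Your argument is correct and follows the same route as the paper's proof, which likewise reduces to showing that $A_\fp$ and $C_\fp$ have nonzero projection in each factor $L_{\fm_{\sH_i}}$ and observes that vanishing of either would force the $G_\fp$-eigenvector $v_{\fp,\sH_i}$ to also be a $\tau$-eigenvector, contradicting Lemma~\ref{l:tauexists}. You have simply spelled out the ``easy to see'' step in the paper (identifying the first column of the change-of-basis matrix as the common $G_\fp$-eigenvector and noting that the $\tau$-eigenlines in the global basis are the coordinate axes because $\lambda_1,\lambda_2$ remain distinct mod $\hat{\fm}$), which is a fine elaboration of the same idea.
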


\begin{proof}
First note that $\T_{\fm} \subset \prod_{i=1}^{t} \Lambda_{\fm_{\sH_i}}$ and hence \[ L_\fm \subset \prod_{i=1}^{t} L_{\fm_{\sH_i}}, \quad \text{ where }
 L_{\fm_{\sH_i}} = \Frac(\Lambda_{\fm_{\sH_i}} ). \]
 We must show that the projections of $A_\fp$ and $C_\fp$ onto each factor $L_{\fm_{\sH_i}}$ are nonzero for $i = 1, \dotsc, t$.
 But if the image of $A_\fp$ or $C_\fp$ is zero in $L_{\fm_{\sH_i}}$, then it is easy to see that the eigenvector for $\rho_{\sH_i}(G_{\fp})$ acting on $L_{\fm_{\sH_i}}^2$
is an eigenvector for $\rho_{\sH_i}(\tau)$.  But we chose $\tau$ in \S\ref{s:kappa} to satisfy the conditions of Lemma~\ref{l:tauexists}, so  this is not the case.  This proves the result.
\end{proof}

Comparing top left entries of the matrix equation (\ref{e:changeofbasis}) and using Lemma~\ref{l:ac}, we find
\begin{equation} \label{e:main}
b(\sigma) = \frac{A_\fp}{C_\fp}\left(\chi\eta_\fp^{-1} \epsilon(\sigma) - a(\sigma) \right), \qquad \sigma \in G_{\fp}.
\end{equation}

\begin{lemma} \label{l:kappaunr}
The cohomology class $[\kappa] \in H^1(G_F, \overline{B}(\chi^{-1}))$ defined in (\ref{e:kappadef}) is unramified outside $R$.
\end{lemma}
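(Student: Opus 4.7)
The plan is to verify, for every place $v \notin R$ of $F$, that the restriction $\res_{I_v}[\kappa]$ is trivial in $H^1(I_v, \overline{B}(\chi^{-1}))$. The places fall into three groups: those with $v \nmid \fn p$, those in $R'$, and those dividing $\fn$ (which are coprime to $p$ since $\fn = \cond(\chi)$ is prime to $p$).

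For $v \nmid \fn p$, the Hida--Wiles theorem gives $\rho$ unramified at $v$, so $b(I_v) = 0$ and $\kappa|_{I_v} \equiv 0$.  For $v = \fp \in R'$, I would exploit the local triangular shape (\ref{e:localbasis2}) through equation (\ref{e:main}).  For $\sigma \in I_\fp$, the unramifiedness of $\eta_\fp$ and of $\chi$ at $\fp$, together with $\epsilon(\sigma) \equiv 1 \pmod{T}$ and the congruence $a(\sigma) \equiv 1 \pmod{\hat{\fm}}$ from (\ref{e:adcong3}), combine to show
\[
\chi \eta_\fp^{-1} \epsilon(\sigma) - a(\sigma) \in \hat{\fm}.
\]
To promote this containment to $b(\sigma) \in \hat{\fm} B$, I would choose a Frobenius lift $\sigma_0 \in G_\fp$ and observe that
\[
\chi \eta_\fp^{-1} \epsilon(\sigma_0) - a(\sigma_0) \equiv \chi(\fp) - 1 \pmod{\hat{\fm}},
\]
which is a unit in $\T_\fm$ \emph{precisely because $\fp \in R'$}.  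Substituting in (\ref{e:main}) at $\sigma_0$ then forces $x_\fp := A_\fp/C_\fp \in B$, whence $b(I_\fp) \subset x_\fp \cdot \hat{\fm} \subseteq \hat{\fm} B$ and $\kappa|_{I_\fp} = 0$.

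For $v \mid \fn$, the character $\chi|_{I_v}$ has nontrivial finite order with values in $E^\times$, and I would show $H^1(I_v, \overline{B}(\chi^{-1})) = 0$ by a direct cohomological argument: writing $N = \ker(\chi^{-1}|_{I_v})$, inflation--restriction reduces matters to $H^1$ of the finite cyclic quotient $I_v/N$ acting on $\overline{B}$ by a nontrivial scalar character (which vanishes since a nontrivial root-of-unity scalar action on a characteristic-zero vector space kills cohomology in all positive degrees) and to the $I_v/N$-invariants in $H^1(N,\overline{B})$, which is controlled by the fact that continuous homomorphisms from the pro-$\ell$ group $N$ into the $p$-adic vector space $\overline{B}$ factor through the $\Z_p(1)$-quotient of tame inertia, where the conjugation twist by $\chi^{-1}$ again annihilates everything.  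The main obstacle is the case $\fp \in R'$: the step $x_\fp \in B$ is what allows the triangular local form to produce vanishing on inertia, and it depends essentially on $\chi(\fp) \neq 1$.  This is exactly the dichotomy between $R$ and $R'$, and its failure at $\fp \in R$ is the local origin of the rank $r$ that pervades the rest of the argument.
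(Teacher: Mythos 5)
Your proof is correct and covers the same ground, but it organizes the prime-to-$p$ places quite differently from the paper.  For $\fp \in R'$ you and the paper run essentially the identical argument: evaluate the parenthesized factor in (\ref{e:main}) at an element of $G_\fp$ on which $\chi$ is nontrivial, observe the resulting unit forces $A_\fp/C_\fp \in B$, and deduce $\overline{b}(I_\fp) = 0$.  For the places $v \nmid p$, however, the paper does something more uniform: it first restricts to $G_w \subset G_H$, where $\chi$ is trivial, so that $\res_w[\kappa]$ becomes a genuine continuous homomorphism $G_w^{\ab} \to \overline{B}$; it then kills the restriction to inertia by the observation that the image of $I_w$ in $G_w^{\ab}$ is (up to a finite prime-to-$p$ part) a pro-$\ell$ group while $\overline{B}$ is a $\Q_p$-vector space.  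That argument does not distinguish between $v \mid \fn$ and $v \nmid \fn p$, and in particular never uses that $\rho$ is unramified away from $\fn p$.  You instead stay at the $F$-level and split: for $v \nmid \fn p$ you invoke unramifiedness of $\rho$ directly (which is slightly shorter for those places), and for $v \mid \fn$ you give a local vanishing $H^1(I_v,\overline{B}(\chi^{-1})) = 0$ via inflation--restriction along $N = \ker(\chi^{-1}|_{I_v})$.  That vanishing argument is correct, modulo one loose phrase: $N$ is not a pro-$\ell$ group (it contains the pro-$\ell$ wild inertia together with a finite-index subgroup of tame inertia, which has nontrivial pro-$p$ part); what actually does the work is that $\Hom_{\cts}(N,\overline{B})$ factors through the $\Z_p$-quotient of tame inertia, on which $I_v$ acts trivially by conjugation, so the remaining $\chi^{-1}$-twist has no invariants.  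The net effect is the same; the paper's detour through $H$ buys a cleaner, case-free treatment of all $v \nmid p$, at the small cost of an inflation--restriction reduction from $I_v$ to $I_w$.
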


\begin{proof}  It is elementary to see that any cohomology class $[\kappa] \in H^1(G_F, \overline{B}(\chi^{-1}))$ is unramified outside $p$.  Indeed, let $v$ be a place of $F$ not lying above $p$ and let $w$ be the place of $H$ lying above $v$ according to the choice of decomposition group $G_v \subset G_F$.
By inflation-restriction, it suffices to prove that the restriction of $[\kappa]$ to $G_w \subset G_H$ is unramified.  However, since $\chi|_{G_H} = 1$, this restriction is an element  \[ \res_{w} [\kappa] \in H^1(G_w, \overline{B}) = \Hom_{\cts}(G_w^{\ab}, \overline{B}). \]
Now, the image of $I_w$ in $G_w^{\ab}$ is a pro-$\ell$ group where $\ell$ is the prime of $\Q$ below $w$ (or trivial, if $w$ is a complex place)
and $\overline{B}$ is a pro-$p$ group, being a finite dimensional $E$-vector space.  Therefore there are no non-zero continuous homomorphisms between these groups and hence $\res_{I_w}([\kappa]) = 0$.

Next we show that $[\kappa]$ is unramified (in fact locally trivial)
 at primes $\fp \in R'$.  By definition of $R'$, there exists $\tilde{\sigma} \in G_{\fp}$ such that $\chi(\tilde{\sigma}) \neq 1$.
Since $\eta_\fp(\tilde{\sigma}) \equiv \epsilon(\tilde{\sigma}) \equiv a(\tilde{\sigma}) \equiv 1 \pmod{\hat{\fm}}$, it follows that $\chi\eta_\fp^{-1} \epsilon(\tilde{\sigma}) - a(\tilde{\sigma}) \in \T_\fm^*$
and hence by (\ref{e:main}) we have $A_\fp/C_\fp \in B$.  Reducing (\ref{e:main}) modulo $\hat{\fm} B$ we see that $\res_\fp \kappa$ is a coboundary:
\[ \kappa(\sigma) = (1 - \chi^{-1}(\sigma))\overline{A_\fp/C_\fp}, \qquad \sigma \in G_\fp .\]
Therefore $\res_\fp [\kappa] = 0$ as desired.
\end{proof}

\begin{lemma} \label{l:bgen}  The $\T_\fm$-module $B$ is generated by $b(\sigma)$ for all  $\sigma \in I_\fp, \fp \in R$.
\end{lemma}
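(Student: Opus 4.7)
The plan is to combine Nakayama's lemma with Proposition~\ref{p:unr} applied to an auxiliary quotient, using the distinguished element $\tau$ chosen in Section~\ref{s:kappa} to rule out the coboundary ambiguity. Concretely, I would let $B' \subseteq B$ denote the $\T_\fm$-submodule generated by $b(\sigma)$ for $\sigma \in I_\fp$, $\fp \in R$. Since $B$ is a finitely generated $\T_\fm$-module (by the compactness argument already invoked in the proof of Lemma~\ref{l:ev}) and $\T_\fm$ is local with maximal ideal $\hat\fm$, Nakayama's lemma applied to $B/B'$ reduces the claim $B = B'$ to showing that the image $\overline{B}'$ of $B'$ in $\overline{B} = B/\hat\fm B$ is all of $\overline{B}$.

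I would then argue by contradiction. Assume $V := \overline{B}/\overline{B}'$ is nonzero, and let $\pi \colon \overline{B} \twoheadrightarrow V$ be the projection. Then $\pi \circ \kappa \colon G_F \to V$ is a cocycle for the action via $\chi^{-1}$. For any $\sigma \in I_\fp$ with $\fp \in R$, by definition $\overline{b}(\sigma) \in \overline{B}'$; since $\chi$ is unramified at all primes above $p$ we have $\chi(I_\fp) = 1$, hence $\kappa(\sigma) = \overline{b}(\sigma)\chi^{-1}(\sigma) = \overline{b}(\sigma) \in \overline{B}'$ and therefore $(\pi \circ \kappa)(\sigma) = 0$. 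Combined with Lemma~\ref{l:kappaunr}, which shows $[\kappa]$ is unramified outside $R$, this proves that the pushed-forward class $[\pi \circ \kappa] \in H^1(G_F, V(\chi^{-1}))$ is unramified at every place of $F$.

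Next, since $V$ is a finite-dimensional $E$-vector space, choosing an $E$-basis identifies $V(\chi^{-1}) \cong E(\chi^{-1})^{\oplus \dim V}$ as $G_F$-modules. Proposition~\ref{p:unr}, applied componentwise (its proof uses nothing beyond torsion-freeness over $E$), implies that every everywhere-unramified class vanishes; hence $[\pi \circ \kappa] = 0$ and $\pi \circ \kappa$ is a coboundary, i.e.\ there exists $v_0 \in V$ with $(\pi \circ \kappa)(\sigma) = (\chi^{-1}(\sigma) - 1) v_0$ for all $\sigma \in G_F$. Now I would evaluate at the distinguished $\tau$: by (\ref{e:rhotau}) we have $b(\tau) = 0$, hence $\kappa(\tau) = 0$, so $(\chi^{-1}(\tau) - 1) v_0 = 0$. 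Because $\chi(\tau) \neq 1$, this forces $v_0 = 0$, and therefore $\pi \circ \kappa \equiv 0$. But then every $\overline{b}(\sigma) = \kappa(\sigma)\chi(\sigma)$ lies in $\overline{B}'$, and these elements generate $\overline{B}$ as an $E$-vector space by definition of $B$; this contradicts $V \neq 0$.

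The heart of the argument is the translation from a $\T_\fm$-module generation question to a cohomological vanishing statement; once Nakayama reduces us to $\overline{B}$, Proposition~\ref{p:unr} does the work. The step that could be viewed as the main obstacle is the elimination of the coboundary, which is exactly what the careful choice of $\tau$ in Lemma~\ref{l:tauexists} was designed to enable: without the fact that $\tau$ avoids every local eigenvector one would be stuck with a nonzero $v_0$, and the argument would not close.
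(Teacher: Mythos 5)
Your proof is correct and follows the same route as the paper's: reduce via Nakayama to showing $B' + \hat\fm B = B$, push the cocycle $\kappa$ to the quotient $V = B/(B' + \hat\fm B)$, observe that the pushed-forward class is everywhere unramified, invoke Proposition~\ref{p:unr} (applied componentwise, as you note) to conclude it is a coboundary, and evaluate at $\tau$ using $b(\tau)=0$ and $\chi(\tau)\neq 1$ to kill the coboundary term. The only cosmetic differences are that the paper runs the argument directly rather than by contradiction and is a bit terser about the componentwise application of Proposition~\ref{p:unr}.
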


\begin{proof}
Let $B_I$ be the $\T_\fm$-module generated by $b(\sigma)$ for all  $\sigma \in I_\fp, \fp \in R$.  Let $\overline{B}_I  = B/B_I$.  We want to show that $\overline{B}_I =0$.
Let $[\kappa_I]$ denote the image of the cohomology class $[\kappa]$ in
$H^1(G_F, (\overline{B}_I /\hat{\fm}\overline{B}_I)(\chi^{-1})).$  By Lemma~\ref{l:kappaunr}, the class $[\kappa_I]$  is unramified outside $R$.  But by the definition of $\overline{B}_I$,
the image of $\kappa(\sigma)$ in $\overline{B}_I /\hat{\fm}\overline{B}_I$ is trivial for $\sigma \in I_\fp$, $\fp \in R$, and therefore $[\kappa_I]$ is unramified everywhere.  By Proposition~\ref{p:unr}, it follows that $[\kappa_I] = 0$.  Repeating the argument at the end of Lemma~\ref{l:ev} shows that $\overline{B}_I =0$. Indeed, writing $\kappa_I$ as a coboundary and evaluating at $\tau$ shows that $\kappa_I = 0$ as a function. Yet the values of $\kappa_I$ generate
$\overline{B}_I /\hat{\fm}\overline{B}_I$ and hence $\overline{B}_I /\hat{\fm}\overline{B}_I = 0$.  Since $\overline{B}_I$ is a finitely generated $\T_\fm$-module, Nakayama's Lemma implies that $\overline{B}_I = 0$ as desired.
\end{proof}

\begin{lemma} \label{l:bsub}  Let $R = \{ \fp_1, \dotsc, \fp_r\}$.  We have $B \subset \frac{A_{\fp_1}}{C_{\fp_1}} \hat{\fm} + \cdots + \frac{A_{\fp_r}}{C_{\fp_r}} \hat{\fm}$.
\end{lemma}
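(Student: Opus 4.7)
The plan is to combine the generating statement of Lemma~\ref{l:bgen} with the explicit local formula (\ref{e:main}) for $b(\sigma)$ on each decomposition group $G_{\fp_i}$. By Lemma~\ref{l:bgen}, the $\T_\fm$-module $B$ is generated by the elements $b(\sigma)$ as $\sigma$ ranges over the inertia groups $I_{\fp_i}$ for $\fp_i \in R$. Hence it suffices to prove that for each $i = 1, \dotsc, r$ and each $\sigma \in I_{\fp_i}$, one has
\[
b(\sigma) \in \frac{A_{\fp_i}}{C_{\fp_i}}\,\hat{\fm}.
\]

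For such $\sigma$, formula (\ref{e:main}), applied at $\fp = \fp_i$, gives
\[
b(\sigma) \;=\; \frac{A_{\fp_i}}{C_{\fp_i}}\Bigl(\chi\eta_{\fp_i}^{-1}\epsilon(\sigma) - a(\sigma)\Bigr),
\]
so the task reduces to showing that the scalar $\chi\eta_{\fp_i}^{-1}\epsilon(\sigma) - a(\sigma) \in \T_\fm$ lies in $\hat{\fm}$. I will check this by verifying that each factor of the product $\chi\eta_{\fp_i}^{-1}\epsilon(\sigma)$ is congruent to $1$ modulo $\hat{\fm}$, and likewise $a(\sigma) \equiv 1 \pmod{\hat{\fm}}$.

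The four congruences are:  first, $\chi(\sigma) = 1$ because $\fp_i \in R$ means $\chi$ is trivial on all of $G_{\fp_i} \supset I_{\fp_i}$; second, $\eta_{\fp_i}(\sigma) = 1$ because $\eta_{\fp_i}$ is unramified by (\ref{e:localbasis2}); third, $\epsilon(\sigma) \equiv 1 \pmod{T}$ by the explicit formula (\ref{e:lambdaeps}), and since $T \in \fm$ this gives $\epsilon(\sigma) \equiv 1 \pmod{\hat{\fm}}$; fourth, $a(\sigma) \equiv 1 \pmod{\hat{\fm}}$ by (\ref{e:adcong3}). Putting these together yields $\chi\eta_{\fp_i}^{-1}\epsilon(\sigma) - a(\sigma) \equiv 1 - 1 = 0 \pmod{\hat{\fm}}$, which is exactly what was needed.

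There is no real obstacle here: all of the work is in the preceding lemmas. The content of Lemma~\ref{l:bgen}---reducing from $\sigma \in G_F$ to $\sigma$ in the inertia subgroups at primes of $R$---is the crucial input that makes the trivial-on-inertia behavior of $\chi$, $\eta_{\fp_i}$, and $\epsilon$ suffice. The formula (\ref{e:main}) is what converts these mod-$\hat{\fm}$ cancellations into an explicit statement about $B$ inside $\sum_i (A_{\fp_i}/C_{\fp_i})\hat{\fm}$.
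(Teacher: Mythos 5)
Your proof is correct and follows exactly the paper's argument: you invoke Lemma~\ref{l:bgen} to reduce to generators $b(\sigma)$ with $\sigma \in I_{\fp_i}$, then use equation (\ref{e:main}) together with the vanishing of $\chi$ on $G_{\fp_i}$, the triviality of the unramified character $\eta_{\fp_i}$ on inertia, and the congruences $\epsilon(\sigma) \equiv a(\sigma) \equiv 1 \pmod{\hat{\fm}}$ to place the parenthetical factor in $\hat{\fm}$. This is precisely the paper's one-line proof, unpacked cleanly.
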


\begin{proof}  This follows from Lemma~\ref{l:bgen} and  equation (\ref{e:main}), together with the observation that  for $\fp \in R$, we have $\chi(I_\fp) = 1$ and
 \[ \eta_\fp(\sigma) \equiv \epsilon(\sigma) \equiv a(\sigma) \equiv 1 \pmod{\hat{\fm}}, \qquad \sigma \in I_\fp. \]
 \end{proof}

\section{Computation of the Regulator}

We now assemble the constructions of the previous sections and complete the proof of Theorem~\ref{t:main}, which states that
$\sL_{\an}(\chi) = \sR_p(\chi) $.  Let $\I$ denote the kernel of the homomorphism $\varphi_\fm\colon \T_\fm \longrightarrow W$.

\subsection{Proof of $\sL_{\an}(\chi) = \sR_p(\chi)$ in Cases 1, 2, and 3} \label{s:comp1}

 Let $[\kappa] \in H^1_R(G_F, \overline{B}(\chi^{-1}))$ denote the cohomology class constructed in \S\ref{s:kappa}.
Let $u_1, \dotsc, u_r$ denote an $E$-basis of $U_\chi$.  By Proposition~\ref{p:orth2} and Lemma~\ref{l:kappaunr}, we have
\begin{equation} \label{e:ressumkappa}
 \sum_{i=1}^{r} \res_{\fp_i} \kappa(u_j) = 0 \text{ in } \overline{B} \qquad \text{ for } j = 1, \dotsc, r. \end{equation}
For each fixed $j$, we can write $u_j = \sum_k y_{jk} \otimes e_{jk}$ where $y_{jk} \in \cO_H[1/p]^*$ and $e_{jk} \in E$.
For each $i = 1, \dotsc, r$, let
\[ \sigma_{ij} = \sum_k e_{jk} [ {y}^{(i)}_{jk}] \in E[G_\fp] \]
where ${y}^{(i)}_{jk} \in G_{\fp_i}$ is any element whose image in $G_{\fp_i}^{\ab}$ is equal to the image of $y_{jk}$ under the local Artin reciprocity map (\ref{e:rec})
(as usual we use (\ref{e:localembed}) to embed $\cO_H[1/p]^* \subset F_{\fp_i}^*$).  Then noting that $\chi(G_{\fp_i}) = 1$, we have by definition:
 \[ \res_{\fp_i} \kappa(u_j) = \overline{b}(\sigma_{ij}) \text{ in } \overline{B} \qquad \text{ where }  b(\sigma_{ij}) = \sum_k e_{jk} b({y}^{(i)}_{jk}) \in B. \]
Therefore (\ref{e:ressumkappa}) can be written
\begin{equation} \label{e:bsum}  \sum_{i=1}^{r} b(\sigma_{ij}) \in \hat{\fm} B \qquad \text{for each } j = 1, \dotsc, r.
\end{equation}
Now by (\ref{e:main}), we have
\begin{equation} \label{e:bsij}
b(\sigma_{ij}) = \sum_k e_{jk} \cdot \frac{A_i}{C_i}\left(\eta_i^{-1} \epsilon({y}^{(i)}_{jk}) - a({y}^{(i)}_{jk}) \right)
\end{equation}
where we have written for simplicity $A_i, C_i,$ and $\eta_i$ for $A_{\fp_i}, C_{\fp_i},$ and $\eta_{\fp_i}$.  As we have noted, the term in parenthesis on the right lies in $\hat{\fm}$ since $\eta_i, \epsilon, a$  all lie in $\T_\fm$ and are congruent to 1 modulo $\fm$.  Furthermore we have:
\begin{alignat}{2}
\eta_i^{-1}({y}^{(i)}_{jk}) & = U_{\fp_i}^{o_i(y_{jk})} \equiv 1 + o_i(y_{jk})(U_{\fp_i} - 1) && \pmod{\I} \nonumber \\
 \epsilon({y}^{(i)}_{jk}) &\equiv  1 + \ell_i(y_{jk}) \pi && \pmod{\pi^2}  \nonumber \\
 a({y}_{jk}^{(i)}) & \equiv 1 + a'_i(y_{jk}) && \pmod{(\hat{\fm}^2, \I)}, \label{e:apcong}
 \end{alignat}
 where $a'_i(y_{jk}) \in \hat{\fm}$ is any element such that
\[ \varphi_{\fm}(a'_i(y_{jk})) = \begin{cases}
0 & \text{ case 1 } \\
\ell_i(y_{jk}) y & \text{ cases 2 and 3.}
\end{cases}
\]  The congruence (\ref{e:apcong}) follows from Lemma~\ref{l:adcongs}.
Of course $\pi^2 \in \hat{\fm}^2$.  Therefore
\[ \eta_i^{-1} \epsilon({y}^{(i)}_{jk}) - a({y}^{(i)}_{jk}) \equiv \ell_i(y_{jk})\pi + o_i(y_{jk})(U_{\fp_i} - 1) - a'_i(y_{jk}) \pmod{(\hat{\fm}^2 , \I)}. \]
Hence (\ref{e:bsij}) can be written more simply as
\[ b(\sigma_{ij}) = \frac{A_i}{C_i}\left( \ell_i(u_j)\pi + o_i(u_j)(U_{\fp_i} - 1) - a_i'(u_j) + m_{ij}\right) \]
for some $m_{ij} \in (\hat{\fm}^2, \I)$.  Now in view of Lemma~\ref{l:bsub}, which implies that $\hat{\fm} B \subset \sum_{i=1}^r \frac{A_i}{C_i}\hat{\fm}^2$,  (\ref{e:bsum}) can be written
\[ \sum_{i=1}^{r}  \frac{A_i}{C_i}\left( \ell_i(u_j)\pi + o_i(u_j)(U_{\fp_i} - 1)  - a_i'(u_j)+ m_{ij}\right) = 0  \qquad \text{for each } j = 1 \dotsc, r, \]
after altering the $m_{ij}$ by elements of $\hat{\fm}^2$. It follows that
\[ \det\left(\frac{A_i}{C_i}\left( \ell_i(u_j)\pi + o_i(u_j)(U_{\fp_i} - 1) - a_i'(u_j) + m_{ij}\right)\right)_{i, j = 1, \dotsc, r} = 0 \]
since it is the determinant of a matrix whose rows all sum to 0. Cancelling the constants $\frac{A_i}{C_i}$ (which are invertible by Lemma~\ref{l:ac}) from the rows of this matrix, we obtain
\[ \det\left( \ell_i(u_j)\pi + o_i(u_j)(U_{\fp_i} - 1) - a_i'(u_j) + m_{ij}\right) = 0. \]
This determinant now takes place in the ring $\T_\fm$, and in fact all of its entries lie in the maximal ideal $\hat{\fm}$.  We apply the homomorphism $\varphi_{\fm}$ to this equation to obtain an equation in the ring $W$:
 \begin{alignat}{2}
 \det(\left( \ell_i(u_j)\pi + o_i(u_j)\epsilon_i + n_{ij}\right) = 0 \qquad & \text{case 1,} \nonumber \\
  \det(\left( \ell_i(u_j)(\pi - y) + o_i(u_j)\epsilon_i + n_{ij}\right) = 0 \qquad & \text{cases 2 and 3}, \label{e:detinw}
 \end{alignat}
where $n_{ij} \in \fm_W^2$. Since each entry of this matrix lies in $\fm_W$, it is clear that the $n_{ij}$ do not effect the value of the determinant modulo $\fm_W^{r+1}$.  Finally, using the relations in the ring $W$ (in particular that $\epsilon_i \pi = 0$ and $\epsilon_i y = 0$)  it is easy to calculate these determinants. In case 1 we find
\begin{alignat}{2}
 0 &\equiv \det(\ell_i(u_j)\pi + o_i(u_j)\epsilon_i ) && \pmod{\fm_W^{r+1}} \nonumber \\
 & \equiv \det(\ell_i(u_j)) \pi^r + \det(o_i(u_j)) \epsilon_1 \cdots \epsilon_r  && \pmod{\fm_W^{r+1}} \nonumber \\
 &\equiv  \det(\ell_i(u_j)) \pi^r + \det(o_i(u_j))(-1)^{r_{\an}+1} \sL_{\an}^*(\chi) \pi^{r_{\an}} && \pmod{\fm_W^{r+1}}. \label{e:final}
 \end{alignat}
If $r_{\an} = r$, then $\sL_{\an}^*(\chi) = \sL_{\an}(\chi)$ and since $\pi^r \not \in \fm_W^{r+1}$, it follows that
\[ \sL_{\an}(\chi) = (-1)^r\det(\ell_i(u_j))/ \det(o_i(u_j)) = \sR_p(\chi) \] as desired.
If $r_{\an} > r$, then $ \pi^{r_{\an}} \equiv 0 \pmod{\fm_W^{r+1}}$, so (\ref{e:final}) implies that $\det(\ell_i(u_j)) = 0$, hence $\sR_p(\chi) =0$.  Since $ \sL_{\an}(\chi)  = 0$ in this case as well, we again find $\sL_{\an}(\chi) = \sR_p(\chi)$.

Cases 2 and 3 are nearly identical, once one uses the relations in the ring $W$ to observe that $(\pi - y)^r = \pi^r - y^r \not \in \fm_W^{r+1}$.

\subsection{Proof of   $\sL_{\an}(\chi^{-1}) = \sR_p(\chi^{-1})$ in Case 3}

As noted in \S\ref{s:ccf}, to complete the proof we must show that  $\sL_{\an}(\chi^{-1}) = \sR_p(\chi^{-1})$ in case 3.
For this, we  repeat the arguments from \S\ref{s:lbcc} onward using the ``$c$-cocycle" coming from our representation rather than the ``$b$-cocycle".  To be precise, we let $C$ denote the $\T_{\fm}$-module generated by the elements $c(\sigma)$ for all $\sigma \in G_F$ and write $\overline{C} = C/\hat{\fm}C$.
Then the equation
\[ c(\sigma \sigma') = c(\sigma) a(\sigma) + d(\sigma)c(\sigma'), \qquad \sigma \sigma' \in G_F \]
together with (\ref{e:adcong3}) implies that the function $\overline{c}\colon G_F \rightarrow \overline{C}$ is a 1-cocycle defining a cohomology class \[ [\overline{c}] \in H^1(G_F, \overline{C}(\chi)). \]

The elementary argument at the beginning of the proof of Lemma~\ref{l:kappaunr} shows that $ [\overline{c}] $ is unramified outside $p$, and hence outside $R$ since $R'$ is empty in case 3.  The analogue of (\ref{e:main}), which
is seen by equating lower left entries in (\ref{e:changeofbasis}), is the following:
\[ c(\sigma) = \frac{C_\fp}{A_\fp}\left(\chi \eta_{\fp}^{-1} \epsilon(\sigma) - d(\sigma) \right), \qquad \sigma \in G_\fp. \]

\begin{lemma} \label{e:dcontain}
For $\fp \in R$ and $\sigma \in I_\fp$, we have that
\[ \varphi_\fm( \epsilon(\sigma) - d(\sigma)) \in y W. \]

\end{lemma}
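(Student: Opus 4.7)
The plan is to deduce the lemma as a direct computation from Lemma~\ref{l:adcongs} and the explicit formula (\ref{e:ycyc2}) for $\epsilon_{\pi-y}$. Since we are in case 3, Lemma~\ref{l:adcongs} gives
\[
\varphi_\fm(d(\sigma)) = \psi_2(\sigma) = \chi(\sigma)\,\epsilon_{\pi-y}(\sigma).
\]
So modulo the prior results, the lemma reduces to evaluating $\epsilon(\sigma) - \chi(\sigma)\epsilon_{\pi-y}(\sigma)$ for $\sigma \in I_\fp$, $\fp \in R$.

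First I would verify that $\chi|_{I_\fp} = 1$ for $\fp \in R$. The definition $\chi(\fp) = 1$ for $\fp \in R$ presupposes that $\chi$ is unramified at $\fp$ (otherwise the Euler factor $1 - \chi(\fp)\N\fp^{-s}$ in the definition of $L^*(\chi,s)$ would be trivial with $\chi(\fp) = 0$, contradicting $\fp \in R$). Hence $\chi(\sigma) = 1$ for all $\sigma \in I_\fp$, and the expression collapses to
\[
\varphi_\fm(\epsilon(\sigma) - d(\sigma)) = \epsilon(\sigma) - \epsilon_{\pi-y}(\sigma).
\]

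The last step is the direct computation in $W = W_3$. Since $\epsilon(\sigma) \in 1 + (T) \subset \Lambda$ and $T = \pi\log_p u$ in $\Lambda_{(1)}$, the element $\mu(\sigma) := (\epsilon(\sigma)-1)/\pi$ lies in $\Lambda_{(1)}$, so its image in $W$ is defined. By construction (\ref{e:ycyc2}),
\[
\epsilon(\sigma) = 1 + \mu(\sigma)\pi, \qquad \epsilon_{\pi-y}(\sigma) = 1 + \mu(\sigma)(\pi - y),
\]
so
\[
\varphi_\fm(\epsilon(\sigma) - d(\sigma)) = \mu(\sigma)\,y \in yW,
\]
as desired.

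There is no substantial obstacle: once one knows $\chi$ is unramified on $I_\fp$ for $\fp \in R$ (an elementary consequence of the definition of $R$) and invokes the description of $\varphi_\fm(d(\sigma))$ provided by Lemma~\ref{l:adcongs}, the lemma is a one-line manipulation in the ring $W$ exploiting the fact that $\epsilon_{\pi-y}$ is obtained from $\epsilon$ by the substitution $\pi \mapsto \pi - y$, so their difference is a multiple of $y$.
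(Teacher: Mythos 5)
Your proposal is correct and is essentially the same argument as the paper's: invoke Lemma~\ref{l:adcongs} (together with $\chi(G_\fp)=1$ for $\fp \in R$ to drop the $\chi$ factor, a point the paper suppresses) and then observe that in $W$ the relation $\pi y = y^2$ gives $\epsilon(\sigma) - \epsilon_{\pi-y}(\sigma) = \frac{\epsilon(\sigma)-1}{\pi}\, y \in yW$. The paper packages this last step as the identity $\epsilon(\sigma) - \epsilon_{\pi - y}(\sigma) = \epsilon_y(\sigma) - 1$, which is the same computation in different notation.
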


\begin{proof}  Lemma~\ref{l:adcongs} implies that $\varphi_\fm(d(\sigma)) = \epsilon_{\pi - y}(\sigma).$  Using the relation $\pi y = y^2$, it is easy to see that
$\epsilon(\sigma) - \epsilon_{\pi - y}(\sigma) = \epsilon_y(\sigma) - 1$ in $W$.  The result follows.
\end{proof}

From Lemma~\ref{e:dcontain}, the arguments of Lemmas~\ref{l:bgen} and~\ref{l:bsub} apply without change to show that
\[ C \subset \frac{C_{\fp_1}}{A_{\fp_1}} \fy + \cdots + \frac{C_{\fp_r}}{A_{\fp_r}}\fy\]
where $\fy = \varphi_\fm^{-1}(yW)$ is an ideal of $\T_\fm$.

We can next repeat the argument of \S\ref{s:comp1} without change, where now $u_1, \dotsc, u_r$ denotes an $E$-basis of $U_{\chi^{-1}}$.   Noting that  $\varphi_\fm(d(\sigma)) = \chi \epsilon_{\pi - y}(\sigma)$ by Lemma~\ref{l:adcongs} and hence that
\[ \varphi_\fm( \chi \eta_{\fp_i}^{-1} \epsilon(\sigma) - d(\sigma)) =  \epsilon_y(\sigma) - 1 + o_i(\overline{\sigma}) \epsilon_i, \qquad \sigma \in G_{\fp_i}
\]
(where $\overline{\sigma} \in \hat{F}_{\fp_i}^*$ is such that $\rec(\overline{\sigma})$ is the image of $\sigma$ in $G_{\fp}^{\ab}$),
the analogue of (\ref{e:detinw}) is the equation
\[  \det(\left( \ell_i(u_j)y + o_i(u_j)\epsilon_i + n_{ij}\right) = 0 \]
with $n_{ij} \in \fm_W\fy $. We obtain
\begin{equation} \label{e:ypcong}
 \det(\ell_i(u_j)) y^r + \det(o_i(u_j))(-1)^{s+1} \sL_{\an}^*(\chi) \pi^s \equiv 0 \pmod{\fm_W\fy^r}.
 \end{equation}
Note that  in the ring $W = W_3$, we have
\[ y^t = \cW \pi^t = (-1)^{s-t} \frac{\sL_{\an}^*(\chi)}{\sL_{\an}^*(\chi^{-1})} \pi^{s}, \]
hence (\ref{e:ypcong}) can be written
\begin{equation} \label{e:ypcong2}
 \det(\ell_i(u_j)) y^r + \det(o_i(u_j))(-1)^{t+1} \sL_{\an}^*(\chi^{-1}) y^t \equiv 0 \pmod{\fm_W\fy^r}.
 \end{equation}
This congruence yields an equality in the 1-dimensional $E$-vector space $\fy^r / \fm_W \fy^r$, which is generated by the image of $y^r$.
If $t=r$, then $\sL_{\an}^*(\chi) = \sL_{\an}(\chi)$ and we obtain
\[  \det(\ell_i(u_j))  + \det(o_i(u_j))(-1)^{r+1}\sL_{\an}(\chi^{-1}) = 0, \]
hence $\sL_{\an}(\chi^{-1}) = \sR_p(\chi^{-1})$ as desired.  If $t > r$, then $y^t \in \fm_W \fy^r$ so (\ref{e:ypcong2}) yields $ \det(\ell_i(u_j)) = 0$ and hence $\sR_p(\chi^{-1}) = 0$.  Since
$\sL_{\an}(\chi^{-1}) = 0$ in this case as well, we again find $\sL_{\an}(\chi^{-1}) = \sR_p(\chi^{-1})$.  This completes the proof.

\begin{remark}
We  note that this argument fills in a hole at the end of the proof of Theorem 4.4 in \cite{ddp}.  There it was simply suggested without elaboration that switching the roles of $b$ and $c$ yields a cohomology class giving the desired result for $\chi^{-1}$.  This is indeed the case if $r_\an(\chi) = r= 1$, but in the case
$r_\an(\chi) > r_\an(\chi^{-1}) $ one needs a version of the argument presented here and in particular the whole homomorphism $\varphi_\fm$; the homomorphism $\phi_{1+\epsilon}$ constructed in \cite{ddp} does not suffice in case 3.
\end{remark}

\end{document}